\newtheorem{theorem}{Theorem}
\newtheorem{definition}[theorem]{Definition}
\newtheorem{lemma}[theorem]{Lemma}
\newtheorem{cor}[theorem]{Corollary}
\newtheorem{conjecture}[theorem]{Conjecture}
\newtheorem{quest}[theorem]{Question}
\newtheorem{prop}[theorem]{Proposition}
\theoremstyle{remark}
\newtheorem{example}[theorem]{Example}
\newtheorem{remark}[theorem]{Remark}
\newtheorem*{remark*}{Remark}
\newtheorem{convention}[theorem]{Convention}
\newtheorem{algorithm}[theorem]{Algorithm}
\newcommand{\fpq}{\frac{p}{q}}
\newcommand{\frs}{\frac{r}{s}}
\newcommand{\calP}{\mathcal{P}}
\newcommand{\stir}{\mathrm{str}}
\newcommand{\calA}{\mathcal{A}}
\newcommand{\calB}{\mathcal{B}}
\newcommand{\highlight}[1]{\colorbox{gray!15}{\small{#1}}}
\definecolor{light-blue}{HTML}{e9f1f8}
\definecolor{grayish}{RGB}{220,220,220}
\definecolor{lavender}{rgb}{0.5,0,1.0}
\definecolor{Dgreen}{RGB}{2,100,64}
\newcommand{\canakci}{{\c{C}}anak{\c{c}}i }
\newcommand{\exx}{\mathrm{ex}}
\newcommand{\wt}{w}
\begin{document}

\title[Orderings on $k$-Markov Numbers]{Orderings on $k$-Markov Numbers}
\author{Esther Banaian}
\email{estherbanaian@gmail.com}
\address{Institut f{\"u}r Matematik \\Universit{\"a}t Paderborn\\Paderborn, Germany}
\date{}

%\date{\today}

\keywords{Markov Numbers, Continued Fractions, Fence Posets, Skein Relations}
%\subjclass{13F55,13F50,13D02}

\begin{abstract}
The $k$-Markov numbers, introduced by Gyoda and Matsushita, are those which appear in positive integral solutions to $x^2 + y^2 + z^2 + k(xy + xz + yz) = (3+3k)xyz$. When $k =0$, this recovers the ordinary Markov numbers. A long-standing question in the theory of Markov numbers is Frobenius's unicity conjecture, concerning whether every Markov number is the maximum in a unique solution triple. Aigner gave a series of weaker, related conjectures which were confirmed to be true by Lee, Li, Rabideau, and Schiffler using techniques from the theory of cluster algebras. We show here that $k$-Markov numbers also satisfy Aigner's conjectures. 
\end{abstract}

\maketitle

\section{Introduction}

A triple of positive integers $(a,b,c) \in \mathbb{Z}_{>0}$ satisfying \[
x^2 + y^2 + z^2 = 3xyz
\]
is known as a \emph{Markov triple}. The equation itself is known as the \emph{Markov equation}. Examples of Markov triples include $(1,1,1), (1,1,2)$, and $(2,5,29)$. A number which appears in a Markov triple is a \emph{Markov number}. For instance, 1,2,5, and 29 are Markov numbers while one can show that 3 is not.

Markov numbers first appeared in the study of Diophantine approximation \cite{markoff1879formes}. In particular, they index points in the discrete interval of the Lagrange spectrum. Since their introduction to the world, Markov numbers have proven to be of interest in many other areas of math, including hyperbolic geometry, algebraic geometry, number theory, and combinatorics. One area of research concerns Frobenius' unicity conjecture, which states that each Markov number is the maximum in a unique Markov triple. As is the case with many long-standing open problems, Frobenius' conjecture has motivated much interesting work on Markov numbers; Aigner's book \cite{aigner2013Markov} provides a lovely summary.

There is a well-known method of indexing Markov numbers with rational numbers in the interval $[0,1]$ (see Section \ref{sec:RationalLabeing}). Formally, we can view this as a function from $\mathbb{Q} \cap [0,1]$ to the set of Markov numbers, sending $\frac{p}{q} \to m_{\frac{p}{q}}$. Frobenius' conjecture is equivalent to stating that this map is injective. If this conjecture is true, then we recover a total order on $\mathbb{Q} \cap [0,1]$ by setting $\fpq \prec \frs$ if $m_\fpq < m_\frs$. Aigner provided a triple of conjectures using this perspective, that is, predicting features of this total ordering.

\begin{conjecture}[Aigner \cite{aigner2013Markov}]\label{conj:Aigner}
Let $p,q$ be positive integers such that $1 \leq p < q$ and $\gcd(p,q) = 1$.
\begin{enumerate}
    \item (Fixed Numerator) If $q < q'$ and $\gcd(p,q') = 1$, then $m_\fpq < m_{\frac{p}{q'}}$.
    \item (Fixed Denominator) If $p < p'< q$ and $\gcd(p',q) = 1$, then $m_\fpq < m_{\frac{p'}{q}}$.
    \item (Fixed Sum) If $0 < i < p$ is such that $\gcd(p-i,q+i) = 1$, then $m_\fpq < m_{\frac{p-i}{q+i}}$.
\end{enumerate}
\end{conjecture}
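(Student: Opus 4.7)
The plan is to develop a combinatorial expansion formula for $k$-Markov numbers and then mimic the strategy Lee, Li, Rabideau, and Schiffler used in the classical case. The first step is to associate to each reduced fraction $\fpq \in [0,1]$ a snake graph $\mathcal{G}_\fpq$ (or an equivalent fence poset) built from the continued fraction expansion of $\fpq$, and to express $m^k_\fpq$ as a weighted sum over perfect matchings (or antichains) of this combinatorial object, with weights that specialize to the classical matching count when $k=0$. Such a formula should arise from viewing $k$-Markov numbers as specializations of cluster variables in a suitable $k$-deformed cluster algebra for the once-punctured torus; the skein relations named in the keywords are the tool that converts the $k$-deformed Markov equation into this combinatorial expansion.

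Given such an expansion, Parts (1) and (2) of Conjecture~\ref{conj:Aigner} should follow from a matching-level comparison. For Fixed Numerator, I would argue that whenever $q < q'$ with $\gcd(p,q)=\gcd(p,q')=1$, the continued fraction of $\frac{p}{q'}$ grows out of that of $\fpq$ in a way that produces a strict weight-preserving injection from matchings of $\mathcal{G}_\fpq$ into those of $\mathcal{G}_{p/q'}$; summing weights then yields $m^k_\fpq < m^k_{p/q'}$. Part (2) (Fixed Denominator) should follow by an analogous injection, or by reducing to Part (1) using the involution $\fpq \leftrightarrow \frac{q-p}{q}$, which should preserve the $k$-Markov number just as it preserves the classical one.

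The main obstacle is Part (3), Fixed Sum, where no obvious containment of combinatorial objects is available. The intended tool is skein relations: a crossing of the arcs indexed by $\fpq$ and $\frac{p-i}{q+i}$ on the torus resolves into two uncrossings, producing an identity of the form $m^k_\fpq \cdot m^k_{\mathrm{aux}} = m^k_{\frac{p-i}{q+i}} \cdot m^k_{\mathrm{aux}'} + R$, where $R$ consists of products of $k$-Markov numbers of strictly smaller complexity. In the classical case one extracts the desired inequality by showing $R$ is positive and that $m_{\mathrm{aux}} \geq m_{\mathrm{aux}'}$; here one must verify that the analogous statements survive the $k$-deformation of the skein coefficients. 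I expect to handle this by induction on the size of the snake graph, with the base case checked directly and the inductive step applying the skein identity to reduce to strictly smaller rationals. Verifying positivity of every term in the $k$-deformed skein resolution, and choosing the auxiliary arc so that the resulting identity actually witnesses the desired inequality, is where the real work will lie; the classical argument is delicate even before the $k$-deformation is introduced, and the new $k$-dependent coefficients must be tracked carefully at every step.
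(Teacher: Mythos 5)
Your toolkit (a fence-poset/snake-graph expansion for $k$-Markov numbers plus skein relations) is the same as the paper's, but the way you deploy it diverges from the actual proof and contains concrete gaps. The paper does not treat the three parts separately. It defines a $k$-Markov distance $\vert AB\vert_k$ on $\mathbb{Z}^2$ as the minimum length of an arc joining $A$ and $B$, proves a Ptolemy-type inequality $\vert AC\vert_k\,\vert BD\vert_k \geq \vert AB\vert_k\,\vert CD\vert_k + \vert AD\vert_k\,\vert BC\vert_k$ for convex lattice quadrilaterals via poset skein relations, and then invokes the argument of Lee--Li--Rabideau--Schiffler, which deduces all of (1)--(3) at once from that single inequality applied to suitable quadrilaterals (the three parts correspond to lines of slope $0$, $\infty$, and $-1$). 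Within your per-part plan, the proposed reduction of Fixed Denominator to Fixed Numerator via the involution $\fpq \leftrightarrow \frac{q-p}{q}$ fails: that involution does not preserve Markov numbers even for $k=0$ (e.g.\ $m_{1/3}=13$ while $m_{2/3}=29$), and since it fixes the denominator it could at best relate Fixed Denominator statements to other Fixed Denominator statements. An injection argument in the spirit of Rabideau--Schiffler is plausible for Part (1), but you would still owe a genuine argument for Part (2).

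The more serious omission concerns Part (3) and, in fact, the whole skein strategy. For $k>0$ the weighted-poset length of an arc is \emph{not} a homotopy invariant: the paper exhibits two homotopic arcs with identical crossing sequences whose lengths are $25$ and $49$. Consequently, before any skein identity can be converted into an inequality between $k$-Markov numbers, one must prove that a specific representative of each homotopy class (the straightened, left- or right-biased arc) actually minimizes the length, and that the posets produced by resolving a crossing are the posets of such minimizing arcs or at least bounded below by them. Establishing this minimality --- no self-intersections, straightening only decreases length, and only the two extreme deformations achieve the minimum --- occupies most of the paper's Section 4 and is entirely absent from your proposal. Without it, the terms $R$, $m^k_{\mathrm{aux}}$, $m^k_{\mathrm{aux}'}$ in your proposed identity need not equal (or dominate) $k$-Markov numbers, and the inequality you want to extract does not follow.
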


A modern lens through which some have studied Markov numbers is \emph{cluster theory}. A cluster algebra is a commutative ring with a distinguished set of generators, \emph{cluster variables}, which sit in overlapping sets called \emph{clusters} \cite{fomin2002cluster}. One can already see parallels to the definition of Markov numbers and triples respectively. The similarities run deeper, which the following key result illuminates. 

\begin{theorem}\label{thm:MarkovTreeConnected}
If $(x,y,z)$ is a Markov triple, so is $(x,y,z')$ where $z'$ is defined by \begin{equation}
z' = \frac{x^2 + y^2}{z},\label{eq:Vieta}
\end{equation}
and similarly for $(x',y,z)$ and $(x,y',z)$. Every Markov triple is the result of applying finitely many such moves to the triple $(1,1,1)$. 
\end{theorem}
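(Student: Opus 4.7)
The plan is to view the Markov equation as a quadratic in a single coordinate (invoking Vieta's formulas), and then to argue by descent on the maximum entry of the triple.

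First, I would fix $x,y$ and rewrite the Markov equation as the monic quadratic $z^2 - 3xy\,z + (x^2 + y^2) = 0$. If $z$ is one root, Vieta's formulas provide the second root $z'$ via $z + z' = 3xy$ and $zz' = x^2 + y^2$: the first identity shows $z' \in \mathbb{Z}$ and the second shows $z' > 0$. Hence $(x,y,z')$ is another positive-integer solution of the Markov equation, and the analogous statements for $x'$ and $y'$ follow by the obvious symmetry. This disposes of the first assertion and simultaneously identifies the two expressions for $z'$ given in \eqref{eq:Vieta}.

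Next, for the second assertion, I would set up a descent on the maximum coordinate. Given a Markov triple ordered so that $x \leq y \leq z$ with $(x,y,z) \neq (1,1,1)$, I would apply the move replacing $z$ by $z'$ and argue that $z' < z$ strictly. To do so, I would evaluate $f(t) = t^2 - 3xy\,t + (x^2+y^2)$ at $t = y$: a short computation gives $f(y) = x^2 - (3x-2)y^2$, which under $1 \leq x \leq y$ is manifestly nonpositive and vanishes only when $x=y=1$. Thus $y$ lies (weakly) between the two roots $z'$ and $z$ of $f$. Combining this with the observation that the only ordered Markov triple with $y=z$ is $(1,1,1)$, I obtain the strict descent $z' \leq y < z$ whenever we start from a triple other than $(1,1,1)$.

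Iterating this reduction produces a sequence of Markov triples whose maximum entries strictly decrease, so the process must terminate; the only possible terminal triple is $(1,1,1)$, and reversing the sequence of moves expresses the original triple as the result of finitely many Vieta jumps starting from $(1,1,1)$. The main obstacle I anticipate is nailing down the strict descent, and in particular the edge case $x=y=1$: there $f(y)$ vanishes, so $y$ is itself a root of $f$, and one must separately check that $(1,1,1)$ is the unique ordered Markov triple with a repeated top coordinate, so that strict descent still holds starting from any triple other than $(1,1,1)$ (notably $(1,1,2)$, which jumps down to $(1,1,1)$).
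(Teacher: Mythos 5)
Your proof is correct. For the first assertion, your argument is exactly the one the paper sketches in the sentence following the theorem: the paper also notes that integrality of $z'$ comes from $\frac{x^2+y^2}{z} = 3xy - z$ (your Vieta sum) and positivity from the product form in Equation \eqref{eq:Vieta}. For the second assertion the paper offers no proof at all --- it is cited as a classical fact --- so your descent argument is the genuinely new content here, and it is the standard one: evaluating $f(t) = t^2 - 3xy\,t + (x^2+y^2)$ at $t=y$ to get $f(y) = x^2 - (3x-2)y^2 \le 0$, with equality only at $x=y=1$, correctly places $y$ between the two roots and yields $z' < y < z$ away from the base case, so the maximum entry strictly decreases and the process terminates at $(1,1,1)$. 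The edge case you flag is easily closed: if $y=z$ in an ordered Markov triple then $x^2 = (3x-2)z^2$, so $z \mid x$, forcing $x=z$ and then $3x-2=1$, i.e.\ the triple is $(1,1,1)$; and $(1,1,2)$ indeed jumps down to $(1,1,1)$. With that check written out, your proof is complete and self-contained, which is more than the paper provides for this statement.
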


The positivity of $z'$ evident from Equation \ref{eq:Vieta} while integrality follows from the fact that, if $(x,y,z)$ is a Markov triple, then $\frac{x^2 + y^2}{z} = 3xy-z$. Equation \ref{eq:Vieta} resembles one specific form of \emph{(cluster) mutation}, which connects different clusters in a cluster algebra. Indeed, there is one specific cluster algebra, aptly called the Markov cluster algebra, whose cluster variables specialize to Markov numbers \cite{beineke2011cluster,propp2005combinatorics}. 

This identification of Markov numbers as specializations of cluster variables has been used to make progress towards the unicity conjecture \cite{LLRS,rabideau2020continued}. The Markov cluster algebra is also a cluster algebra of surface type, as discussed in \cite{FST-I,FT-II}; the relevant surface is a once-punctured torus. Musiker, Schiffler, and Williams introduced a direct, combinatorial method of computing cluster variables for cluster algebras of surface type in order to demonstrate the coefficients are always positive \cite{musiker2011positivity}. This construction associates to each cluster variable $x$ a weighted, bipartite graph $\mathcal{G}$ such that $x$ can be expressed as a dimer partition function of $\mathcal{G}$. A corollary in the present setting is that Markov numbers can be interpreted as the number of dimer covers (or, \emph{perfect matchings}) of a family of graphs \cite{propp2005combinatorics}. Indeed, perhaps the earliest instance of a snake graph (predating the definition of cluster algebras) appears in work of Cohn on Markov numbers \cite{Cohn}. These graphs also are closely related to Christoffel words, another combinatorial tool used in the study of Markov numbers \cite{Christoffel}.

Interpreting Markov numbers as cardinalities of sets of dimer covers has been useful in two, related directions. Firstly, \canakci and Schiffler showed that the number of dimer covers of a snake graph can be expressed as the numerator of a \emph{continued fraction} \cite{CSContFrac}. Therefore, one can use machinery related to continued fractions in order to study Markov numbers. (We remark that one could circumvent the cluster algebra connection to link Markov numbers and continued fractions.) This approach was taken by Rabideau and Schiffler in \cite{rabideau2020continued} to prove the Fixed Numerator conjecture (i.e. Conjecture \ref{conj:Aigner} part 1). The second approach uses \emph{snake graph calculus} \cite{CS15,CS13}, a combinatorial method of realizing \emph{skein relations} through the combinatorial formula of Musiker, Schiffler, and Williams \cite{musiker2011positivity}. This approach was taken by Lee, Li, Rabideau, and Schiffler in \cite{LLRS} to show that the three statements in Conjecture \ref{conj:Aigner} follow from a more general set of inequalities on the numbers $m_\fpq$. We note that similar results were shown with other methods in \cite{Gaster,MR4265545,mcshane2021convexity}.  

Here, we will apply both approaches to study  solutions to a family of equations which generalize the Markov equation. Given $k \geq 0$, we define the \emph{$k$-Markov equation} to be  \begin{equation}
x^2 + y^2 + z^2 + k(xy + xz + yz) = (3+3k)xyz.\label{eq:GM_General}
\end{equation}

Triples of positive integers satisfying Equation \ref{eq:GM_General} are called $k$-Markov triples, which are comprised of $k$-Markov numbers. Notice that ordinary Markov triples  coincide with 0-Markov triples in our language, allowing us to simultaneously discuss the ordinary and generalized case. 

Equation \ref{eq:GM_General} was first defined and studied by Gyoda and Matsushita \cite{GyodaMatsushita}. In particular, they showed that Vieta jumps connect all $k$-Markov triples.
\begin{theorem}[Theorem 1.1 \cite{GyodaMatsushita}]\label{thm:GM}
If $(x,y,z)$ is a $k$-Markov triple, so is $(x,y,z')$ where $z'$ is given by \begin{equation}
z' = \frac{x^2 + kxy + y^2}{z} \label{eq:GenVieta}
\end{equation}
and similarly for $(x',y,z)$ and $(x,y',z)$. Every $k$-Markov triple is the result of applying finitely many such moves to the triple $(1,1,1)$. 
\end{theorem}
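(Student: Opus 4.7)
The plan is to adapt the classical Markov descent argument to the $k$-Markov setting. Part~(1) is a Vieta jumping argument: fixing $x,y$ and viewing Equation~\ref{eq:GM_General} as a quadratic in $z$, namely
\[
z^2 + \bigl[k(x+y) - (3+3k)xy\bigr]\,z + \bigl(x^2 + kxy + y^2\bigr) = 0,
\]
Vieta's formulas give $z + z' = (3+3k)xy - k(x+y)$ and $zz' = x^2 + kxy + y^2$. The sum expression shows $z' \in \mathbb{Z}$, and the product expression (positive, divided by $z>0$) shows $z' > 0$, so $(x,y,z')$ is again a positive integer triple satisfying Equation~\ref{eq:GM_General}. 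The symmetry of the equation in $x,y,z$ handles the other two jumps identically.

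For part~(2), I arrange $1 \leq x \leq y \leq z$ and aim to prove the key lemma: if $(x,y,z) \neq (1,1,1)$, then the Vieta jump on $z$ yields $z' < z$ strictly. Once this is established, the maximum coordinate is a strictly decreasing positive integer along the descent, so the process terminates in finitely many steps at $(1,1,1)$; since each jump is an involution, reversing the path expresses the original triple as a finite sequence of jumps starting from $(1,1,1)$.

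To prove the descent step, let $f(t) = t^2 + \bigl[k(x+y) - (3+3k)xy\bigr]\,t + (x^2 + kxy + y^2)$, so that $z$ and $z'$ are the two roots of $f$. The inequality $z' < z$ is then equivalent to $f(y) < 0$, i.e., $y$ lying strictly between the roots. Direct expansion gives
\[
f(y) = x^2 + 2kxy + (2+k)y^2 - (3+3k)xy^2.
\]
When $x = 1$ this factors as $f(y) = -(y-1)\bigl((1+2k)y + 1\bigr)$, which is nonpositive for $y \geq 1$ with equality iff $y = 1$. When $x \geq 2$, the coefficient $(2+k)-(3+3k)x$ of $y^2$ is at most $-4-5k$, and a short calculation using $y \geq x \geq 2$ gives $f(y) < 0$ strictly. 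The boundary case $x = y = 1$ is handled separately: then $f(1)=0$, so $1$ is a root of $f$, and Vieta gives the other root as $2+k$; hence $\{z,z'\} = \{1, 2+k\}$, forcing either $(x,y,z)=(1,1,1)$ (the base case) or $z = 2+k$ with $z' = 1 < z$.

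I expect the main obstacle to be the degenerate regime $x = y = 1$, where $f(y) = 0$ rather than $<0$ and the generic strict inequality breaks; the resolution is the explicit Vieta computation above, which also confirms that $(1,1,1)$ is the unique terminal triple of the descent.
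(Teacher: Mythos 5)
The paper does not prove this statement itself; it is quoted verbatim from Gyoda--Matsushita \cite{GyodaMatsushita} (Theorem 1.1 there), so there is no internal proof to compare against. Your Vieta-jumping descent is the standard argument for results of this type and is correct: the quadratic in $z$ is set up properly, the sum and product of roots give integrality and positivity of $z'$, the computation $f(y) = x^2 + 2kxy + (2+k)y^2 - (3+3k)xy^2$ is right, the factorization $-(y-1)\bigl((1+2k)y+1\bigr)$ at $x=1$ checks out, and the boundary case $x=y=1$ correctly yields $\{z,z'\}=\{1,k+2\}$ (consistent with the root $(1,k+2,1)$ of the $k$-Markov tree in the paper). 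One cosmetic point: your termination measure is stated as ``the maximum coordinate is strictly decreasing,'' but when $y=z$ the jump $z\mapsto z'<y$ leaves the maximum unchanged for that step; it is cleaner to observe that $x+y+z$ strictly decreases at every jump (or that $z'<y$ forces the maximum to drop within two steps), after which the descent terminates as you say.
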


 The definition of the $k$-Markov equation is motivated by Chekhov and Shapiro's \emph{generalized cluster algebras}, which have the same structure as (ordinary) cluster algebras but allow more general mutation relations. Indeed, for each choice of $k$, there is a specific generalized cluster algebra whose cluster variables specialize to $k$-Markov numbers. 
 
 We remark that  Gyoda and Matsushita studied the more general equation \begin{equation}
x^2 + y^2 + z^2 + k_1yz + k_2 xz + k_3 xy = (3+k_1 + k_2 + k_3)xyz
 \label{eq:DistinctKs}\end{equation}
with three nonnegative integral parameters $k_1,k_2,k_3$ \cite{GyodaMatsushita} . An appropriately defined analogue of Theorem \ref{thm:GM} is true here and the connection to generalized cluster algebras remains. However, the uniqueness conjecture is known to not be true in this widest setting of generality (see \cite[Remark 2.8]{GyodaMatsushita}), and the asymmetry forces one to use a more complex indexing system (see \cite[Section 3.4]{BG}). These two complications convince us to only consider the case $k_1 = k_2 = k_3$ here. However, we remark that the results in Sections \ref{sec:TwoPosets} and \ref{sec:MarkovDistance} could be extended to the distinct $k_i$ case. 

Our main result is that Aigner's conjectures holds for $k$-Markov numbers as well.

\begin{theorem}\label{thm:Main}
Conjecture \ref{conj:Aigner} holds for the $k$-Markov numbers for all $k \geq 0$.
\end{theorem}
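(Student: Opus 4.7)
My plan is to follow, in parallel, the two approaches that resolved Aigner's conjectures in the classical case: the continued-fraction approach of Rabideau-Schiffler \cite{rabideau2020continued} and the snake-graph / skein-relation approach of Lee-Li-Rabideau-Schiffler \cite{LLRS}. The overarching strategy is (a) to realize each $k$-Markov number $m_\fpq$ as a specialization of a cluster variable in the generalized cluster algebra attached to the once-punctured torus that corresponds via Theorem \ref{thm:GM} to Equation \eqref{eq:GM_General}, (b) to express it combinatorially as a weighted generating function on a snake graph $G_\fpq$ or, equivalently, on its associated fence poset $F_\fpq$, where the weights are polynomials in $k$, and (c) to reduce each part of Conjecture \ref{conj:Aigner} to a comparison of two such generating functions and prove that comparison by a direct combinatorial argument.

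First I would establish the combinatorial model. The snake graph $G_\fpq$ from the classical case should still control $m_\fpq$, but each matching now carries a weight in $k$ coming from the extra $kxy$ cross term in the generalized Vieta relation \eqref{eq:GenVieta}; at $k = 0$ one recovers the usual perfect-matching count and the classical formula of Musiker, Schiffler, and Williams \cite{musiker2011positivity}. The same data can be packaged as the numerator of a $k$-deformed continued fraction whose convergents satisfy a suitably $k$-twisted recurrence, which is convenient for explicit computation. Via \canakci-Schiffler \cite{CSContFrac} this also translates to a weighted antichain generating function of the fence poset $F_\fpq$ with weights on covering relations determined by $k$.

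Next I would lift the snake graph calculus of \cite{CS13, CS15} to this weighted setting, producing a $k$-analogue of the skein identity $m_\alpha m_\beta = m_\gamma m_\delta + m_\varepsilon m_\zeta$ obtained by resolving a pair of crossing arcs on the punctured torus. In the generalized setting this becomes a polynomial identity in $k$ with additional terms reflecting the extra cross term of \eqref{eq:GenVieta}. Once it is available, each part of Conjecture \ref{conj:Aigner} reduces, exactly as in \cite{LLRS}, to a strict inequality between two such products, which I would verify by exhibiting an injection from the weighted matchings / antichains of the smaller side into those of the larger and checking it coefficient by coefficient in $k$.

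The main obstacle will be proving the correct $k$-analogue of the skein identity. Because the generalized mutation relation carries three terms rather than two, the snake graph resolutions produce more cases and the bijection must simultaneously control both the combinatorial structure and the power of $k$. To handle this I would either work directly with the fence poset $F_\fpq$, where the weight bookkeeping is most transparent, or appeal to the combinatorial expansion formula for cluster variables in generalized surface-type cluster algebras (compare \cite{BG}) to derive the identity abstractly. Once the skein identity is in place, the three reductions and the final coefficient-by-coefficient comparisons should follow the classical template of \cite{LLRS} with only routine modifications.
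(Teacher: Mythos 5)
Your overall architecture (combinatorial model for $m^{(k)}_\fpq$, skein relations, then the reduction of \cite{LLRS}) points in the right direction, but the proposal leaves unresolved exactly the step on which everything hinges, and the route you sketch for that step is not the one that works. You propose to keep the $k$-dependence as \emph{weights} on matchings/antichains and to develop a ``$k$-deformed'' skein calculus ``with additional terms reflecting the extra cross term,'' verified coefficient-by-coefficient in $k$. In fact no additional terms arise and no new weighted calculus is needed: the key idea in the paper is Lemma~\ref{lem:ExtendPoset}, which replaces the weighted fence poset $\calP_\gamma$ (whose balanced pairs carry weights $k$ and $\tfrac1k$) by an honest unweighted fence poset $\calP_\gamma^{\exx}$ with the same generating function, so that $m^{(k)}_\fpq$ is literally a count of order ideals, equivalently a numerator $\mathcal{N}[a_1,\dots,a_n]$ of an ordinary continued fraction (Corollary~\ref{cor:CountLength}). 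After this reduction the \emph{existing} two-term skein relations for unweighted fence posets (Proposition~\ref{prop:SkeinRelationsOnPosets}) apply verbatim, and the Ptolemy inequality (Proposition~\ref{prop:GeneralizedPtolemy}) follows. Without this extension step, your ``main obstacle'' paragraph is a restatement of the problem rather than a solution, and the anticipated three-term structure would send you down a path that does not close.

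A second gap: you never set up the distance function $\vert AB\vert_k$ as a minimum of $\ell_k$ over arcs with given endpoints, nor do you address why the straight (deformed) segment $\gamma_{AB}^L$ realizes that minimum. This is where most of the genuinely new work for $k>0$ lives, because unlike the $k=0$ case the length $\ell_k$ is \emph{not} constant on homotopy classes --- it depends on whether each crossing falls to the left or right of a midpoint --- so one must prove Proposition~\ref{prop:StraightenedIsShortest} (via the continued-fraction identities of Lemma~\ref{lem:ContFracSkein}), Lemma~\ref{lem:LeftAndRightEqual}, and Lemmas~\ref{lem:SelfIntNotMinimal} and~\ref{lem:FinalStep} before the Ptolemy inequality even makes sense in the form needed to quote the argument of \cite[Theorem 4.1]{LLRS}. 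Your final ``injection checked coefficient by coefficient'' is not the mechanism by which the conjectures follow; the actual endgame is that the Ptolemy inequality alone suffices, exactly as in the $k=0$ proof.
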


 Theorem \ref{thm:GM} for $k = 1$ (i.e. 1-Markov numbers) was independently shown by the author and Sen in \cite{banaian2024generalization}. In this case, the generalized cluster algebra can be seen as arising from a once-punctured sphere with three orbifold points of order 3. The main focus in this previous work was to use a generalization of the snake graph construction to orbifolds from previous work with Kelley \cite{BanaianKelley} to study 1-Markov numbers, largely through continued fractions.

In joint work with Gyoda \cite{BG}, we repackaged the construction from \cite{BanaianKelley} into the language of \emph{fence posets} and used it to study  the generalized cluster algebras in the backdrop of the $k$-Markov numbers and the more general solutions to Equation \ref{eq:DistinctKs}. These posets come with labels and weights which are scalar multiples of monomials; we can use this construction to study the $k$-Markov numbers as well by setting the initial variables to 1 in the weights and labels. The resulting posets can still come with weights other than 1, so that we are no longer interpreting $k$-Markov numbers as cardinalities. However, we use a simple trick (Lemma \ref{lem:ExtendPoset}) to ``extend'' the weighted posets to those with all elements with weight 1. Our main proof technique is to apply the poset skein relations from joint work with Kang and Kelley \cite{banaian2024skein} to either the weighted posets or these extended posets. In Section \ref{sec:Skein}, we highlight how these poset skein relations can be used to deduce relations on numerators of continued fractions, which may be of independent interest.

The remainder of the paper is structured as follows. In Section \ref{sec:Tools}, we complete our survey of the background as well as give more precise details regarding the combinatorial tools discussed here. The poset construction from \cite{BG} (in the specialized case) as well as the extended version is given in Section \ref{sec:TwoPosets}. Section \ref{sec:MarkovDistance} describes a distance function we put on $\mathbb{Z}^2$, $\ell_k$,  using the combinatorial constructions in the previous section and following the ordinary case from \cite{LLRS}. Finally, Section \ref{sec:InducedOrdering} contains the proof of the main result, using properties of $\ell_k$, and culminates with a discussion regarding future directions.

\section*{Acknowledgements}

The author is deeply grateful to Yasuaki Gyoda for providing the initial inspiration for this work and valuable feedback, including pointing out a hole in the proof of Proposition \ref{prop:StraightenedIsShortest} in an earlier version.

\section{Tools}\label{sec:Tools}

In this section, we summarize necessary background for the main result. Many topics are inspired by cluster theory, which we point out for continuity and motivation. However, knowledge of cluster algebras is not necessary to understand any results in this article.

\subsection{Rational Labeling}\label{sec:RationalLabeing}

Theorem \ref{thm:GM} endows $k$-Markov triples with the structure of a rooted tree. It is most convenient to omit the first triple, $(1,1,1)$, from the construction.

\begin{definition}
Define the \emph{$k$-Markov tree}, $\mathrm{MT}^k$, to be the rooted tree whose vertices are labeled by triples of positive integers with root $(1,k+2,1)$ and such that each vertex $(a,b,c)$ has left child $(b,\frac{b^2 + kbc + c^2}{a},c)$ and right child $(a, \frac{a^2 + kab + b^2}{c},c)$.

%Define also the \emph{principal $k$-Markov tree}, $\mathrm{T}^k$, to be the rooted subtree of $\mathrm{MT}^k$ with root given by the right child of $(1,k+2,1)$. 
\end{definition}

%We define the principal $k$-Markov tree since otherwise there is repetition within the tree. 
  We draw a neighborhood of the root of the 1-Markov tree below. Notice that there is symmetry between the left and right side of the tree, up to reordering.
%The principal 1-Markov tree is on the right.

\begin{center}
\begin{tikzpicture}[scale=0.65]
\node(a) at (0,10){$(1,3,1)$};
\node(b) at (-6,9){$(3, 13, 1)$};
\node(c) at (6,9){$(1,13,3)$};
\draw(a) -- (b);
\draw(a) -- (c);
\node(d) at (-8.5,8){$(13,61,1)$};
\node(e) at (-2.5,8){$(3,217,13)$};
\draw(b) -- (d);
\draw(b) -- (e);
\node(f) at (2.5,8){$(13,217,3)$};
\node(g) at (7.5,8){$(1,61,13)$};
\draw(c) -- (f);
\draw(c) -- (g);
\node(h) at (-10,7){$(61,291,1)$};
\node(i) at (-7.5,6){$(13,4683,61)$};
\draw(d) -- (h);
\draw(d) -- (i);
\node(j) at (-4.5,7){$(217,16693,13)$};
\node(k) at (-1.5,6){$(3,3673,217)$};
\draw(e) -- (j);
\draw(e) -- (k);
\node(l) at (1.5,7){$(217,3673,3)$};
\node(m) at (4.5,6){$(13,16693,217)$};
\draw(f) -- (l);
\draw(f) -- (m);
\node(n) at (7.5,7){$(61,4683,13)$};
\node(o) at (10,6){$(1,291,61)$};
\draw(g) -- (n);
\draw(g) to [out = 0, in = 90] (o);
\end{tikzpicture}
\end{center}

We will label $k$-Markov numbers with rational numbers by comparing the $k$-Markov tree with the Farey tree.

\begin{definition}
Define the \emph{Farey tree}, $\mathrm{FT}$, to be the rooted tree whose vertices are labeled with triples from $\mathbb{Q} \cup \{\frac10\}$ with root $(\frac01, \frac11, \frac10)$ and such that each vertex $(\frs, \fpq, \frac{t}{u})$ has left child $(\fpq, \frac{p+t}{q+u}, \frac{t}{u})$ and right child $(\frs, \frac{r+p}{s+q},\fpq)$.

%Define also the \emph{small Farey tree}, $\mathrm{FT}_{(0,1)}$ to be the rooted subtree of $\mathrm{FT}$ with root given by $(\frac01, \frac12, \frac11)$, i.e., the right child of $(\frac01, \frac11, \frac10)$.
\end{definition}

We draw a neighborhood of the root of the Farey tree below. Notice that the rooted sub-tree with root $(\frac01, \frac12, \frac11)$ is labeled by triples coming from the interval $[0,1]$. Let $\mathbb{Q}_{[0,1]}:= \mathbb{Q} \cap [0,1]$.

\begin{center}
\begin{tikzpicture}
\node(a) at (0,10){$(\frac01,  \frac11, \frac10)$};
\node(b) at (-3,9){$(\frac11,  \frac21, \frac10)$};
\node(c) at (3,9){$(\frac01,  \frac12, \frac11)$};
\draw(a) -- (b);
\draw(a) -- (c);
\node(d) at (-4.5,8){$(\frac21, \frac31, \frac10)$};
\node(e) at (-1.5,8){$(\frac11, \frac32, \frac21)$};
\draw(b) -- (d);
\draw(b) -- (e);
\node(f) at (1.5,8){$(\frac12, \frac23, \frac11)$};
\node(g) at (4.5,8){$(\frac01, \frac13, \frac12)$};
\draw(c) -- (f);
\draw(c) -- (g);
\node(h) at (-5.25,7){$(\frac21, \frac31, \frac10)$};
\node(i) at (-3.75,7){$(\frac11, \frac32, \frac21)$};
\draw(d) -- (h);
\draw(d) -- (i);
\node(j) at (-2.25,7){$(\frac21, \frac31, \frac10)$};
\node(k) at (-0.75,7){$(\frac11, \frac32, \frac21)$};
\draw(e) -- (j);
\draw(e) -- (k);
\node(l) at (5.25,7){$(\frac01, \frac14, \frac13)$};
\node(m) at (3.75,7){$(\frac13, \frac25, \frac12)$};
\draw(g) -- (l);
\draw(g) -- (m);
\node(n) at (2.25,7){$(\frac12, \frac35, \frac23)$};
\node(o) at (0.75,7){$(\frac23, \frac34, \frac11)$};
\draw(f) -- (n);
\draw(f) -- (o);
\end{tikzpicture}
\end{center}

%The definition of the small Farey tree is designed to mimic the principal $k$-Markov tree. 
It is well-known that every positive rational number appears as the middle entry of a unique vertex in $\mathrm{FT}$.
%, and in particular every rational number in $\mathbb{Q} \cap (0,1)$ appears as the  middle entry of a unique vertex in the small Farey tree.

There is a canonical bijection of trees $\overline{\iota}: \mathrm{FT} \to \mathrm{MT}^k$. We use this to define a map  $\iota:  \mathbb{Q}_{[0,1]}\to \mathbb{Z}$ by focusing on the middle entries of each, \[
\iota\bigg(\fpq\bigg) = b \text{ where } \overline{\iota}\bigg(\frs, \fpq, \frac{t}{u}\bigg) = (a,b,c).\] Using this comparison of trees, given $\fpq \in \mathbb{Q}_{[0,1]}$, we define $m^{(k)}_{\fpq}$ as \[
m^{(k)}_\fpq := \begin{cases} 1 & \fpq = \frac01\\
k+2 & \fpq = \frac11\\
\iota(\fpq) & 0 < \fpq < 1.\end{cases}
\]

For example, $m_{\frac25}^{(1)} = 4683$.
In Section \ref{sec:TwoPosets}, we will give an explicit method to construct $m^{(k)}_\fpq$.

\subsection{Continued Fractions}

This section discusses an important computational tool which is intricately linked to both Markov numbers and fence posets (to be discussed in the next section).

\begin{definition}
Given a finite list $a_1,\ldots,a_n$ of nonnegative integers, with $a_i > 0$ for $i > 1$,  the \emph{continued fraction}, $[a_1,\ldots,a_n]$ is given by \[
[a_1,\ldots,a_n] = a_1 + \cfrac{1}{a_2 + \cfrac{1}{a_3 + \cfrac{1}{\ddots + \cfrac{1}{a_n}}}}.
\]
\end{definition}

Every rational number $\fpq$ can be expressed as a continued fraction; this expression is unique if we insist $a_n > 1$. It will be advantageous for us later to compute continued fractions in an alternate way.

\begin{lemma}\label{lem:ComputeWithMatrix}
Let $a_1,\ldots,a_n$ be a list of nonnegative integers with $a_n > 0$. If $p,q,r,s$ are integers defined by \[
\begin{pmatrix} a_1 & 1 \\ 1 & 0 \end{pmatrix} \begin{pmatrix} a_2 & 1 \\ 1 & 0 \end{pmatrix} \cdots \begin{pmatrix} a_n & 1 \\ 1 & 0 \end{pmatrix} = \begin{pmatrix} p & r\\ q & s\end{pmatrix},
\]
then \[
[a_1,\ldots,a_n] = \fpq \qquad \text{and} \quad [a_1,\ldots,a_{n-1}] = \frs.
\]
\end{lemma}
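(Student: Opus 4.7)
The plan is to prove both identities simultaneously by induction on $n$, exploiting the fact that the standard recursion
\[
[a_1, a_2, \ldots, a_n] = a_1 + \frac{1}{[a_2, \ldots, a_n]}
\]
defining continued fractions mirrors precisely the effect of peeling the leftmost factor off of the given matrix product.

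For the base case I would take $n = 2$: a direct multiplication gives
\[
\begin{pmatrix} a_1 & 1 \\ 1 & 0 \end{pmatrix} \begin{pmatrix} a_2 & 1 \\ 1 & 0 \end{pmatrix} = \begin{pmatrix} a_1 a_2 + 1 & a_1 \\ a_2 & 1 \end{pmatrix},
\]
so $p/q = a_1 + 1/a_2 = [a_1, a_2]$ and $r/s = a_1 = [a_1]$, as needed. The degenerate case $n = 1$ can either be absorbed here by adopting the convention that the empty continued fraction equals $1/0$, or handled as a separate trivial base case for the first identity alone.

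For the inductive step, let $M' = \begin{pmatrix} p' & r' \\ q' & s' \end{pmatrix}$ denote the product of the last $n - 1$ matrices, so that by the inductive hypothesis $[a_2, \ldots, a_n] = p'/q'$ and $[a_2, \ldots, a_{n-1}] = r'/s'$. Left-multiplying by the first factor yields
\[
\begin{pmatrix} p & r \\ q & s \end{pmatrix} = \begin{pmatrix} a_1 p' + q' & a_1 r' + s' \\ p' & r' \end{pmatrix},
\]
and reading off the two columns gives $p/q = a_1 + q'/p' = a_1 + 1/[a_2, \ldots, a_n] = [a_1, \ldots, a_n]$ and, symmetrically, $r/s = a_1 + s'/r' = a_1 + 1/[a_2, \ldots, a_{n-1}] = [a_1, \ldots, a_{n-1}]$, which closes the induction.

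The only obstacles are matters of bookkeeping rather than ideas: one needs $p' \neq 0$ and $r' \neq 0$ to legitimize the inversions $q'/p' = 1/(p'/q')$ and $s'/r' = 1/(r'/s')$, and one needs a consistent convention for the empty continued fraction to make the second identity meaningful when $n = 1$. The positivity of all relevant entries follows from a parallel induction using the hypotheses $a_n > 0$ and $a_i \geq 0$, so neither issue affects the structure of the argument.
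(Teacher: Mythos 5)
The paper states this lemma without proof, treating it as a classical fact about continuants, so there is no argument of record to compare against; your induction is the standard one and it is correct. Peeling off the leftmost matrix and matching it against the recursion $[a_1,\ldots,a_n]=a_1+1/[a_2,\ldots,a_n]$ is exactly how this identity is usually established, and your base case and inductive step both check out. One small caution on the bookkeeping you flag at the end: the lemma as stated in the paper allows intermediate $a_i=0$ (only $a_n>0$ is required), and in that regime the claim that ``positivity of all relevant entries follows from a parallel induction'' is not quite right --- for instance the list $0,1$ produces $r'=0$, so the inversion $s'/r'=1/(r'/s')$ degenerates. Under the paper's own definition of a continued fraction ($a_i>0$ for $i>1$) this cannot occur and your argument is complete; if one wants the lemma in the generality actually stated (and later used, e.g.\ in Lemma~\ref{lem:ContFracSkein} where zero entries appear), the cleanest fix is to take the matrix product as the definition of $[a_1,\ldots,a_n]$ for such lists, as the paper itself suggests immediately after the lemma.
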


One can replace the definition of a continued fraction with the matrix formula in Lemma \ref{lem:ComputeWithMatrix}, in which case it is no longer problematic if $a_i = 0$. Next, we use Lemma \ref{lem:ComputeWithMatrix} to relate numerators of continued fractions. These formulas will be useful in later sections. Let $\mathcal{N}[a_1,\ldots,a_n]$ denote the numerator of the resulting (reduced) fraction $[a_1,\ldots,a_n]$. That is, if $[a_1,\ldots,a_n] = \fpq$, then $\mathcal{N}[a_1,\ldots,a_n] = p$.  

\begin{lemma}\label{lem:ContFracSkein}
Let $\mu_1,\mu_2$ be possibly empty lists of nonnegative integers and let $a,b,k$ be nonnegative integers. We have \begin{equation}
\mathcal{N}[\mu_1,a,k,b,\mu_2] = \mathcal{N}[\mu_1,a+k+b,\mu_2] + k\mathcal{N}[\mu_1,a-1,1,b-1,\mu_2]\label{eq:ContFracSkein1}
\end{equation}
and 
\begin{equation}
\mathcal{N}[\mu_1,a,k+1,b,\mu_2] = \mathcal{N}[\mu_1,a,1,b+k,\mu_2] + k\mathcal{N}[\mu_1,a-1,1,b-2,\mu_2]\label{eq:ContFracSkein2}
\end{equation}
\end{lemma}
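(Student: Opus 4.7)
The plan is to reduce both continued-fraction identities to $2 \times 2$ matrix identities that follow from a direct computation. Set $M(x) := \begin{pmatrix} x & 1 \\ 1 & 0 \end{pmatrix}$, and for a list $\mu = (a_1, \ldots, a_n)$ write $M(\mu) := M(a_1) \cdots M(a_n)$. By Lemma \ref{lem:ComputeWithMatrix}, $\mathcal{N}[a_1, \ldots, a_n]$ is the $(1,1)$ entry of $M(\mu)$. Letting $L := M(\mu_1)$ and $R := M(\mu_2)$, every numerator appearing in (\ref{eq:ContFracSkein1}) and (\ref{eq:ContFracSkein2}) is the $(1,1)$ entry of $L \cdot X \cdot R$ for some length-three product $X$ of matrices of the form $M(\cdot)$. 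Thus it suffices to establish the matrix identities
\begin{equation*}
M(a) M(k) M(b) = M(a+k+b) + k \, M(a-1) M(1) M(b-1)
\end{equation*}
and
\begin{equation*}
M(a) M(k+1) M(b) = M(a) M(1) M(b+k) + k \, M(a-1) M(1) M(b-2),
\end{equation*}
since sandwiching each by $L$ on the left and $R$ on the right and then reading off the $(1,1)$ entry yields (\ref{eq:ContFracSkein1}) and (\ref{eq:ContFracSkein2}) respectively.

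Each matrix identity is a routine entry-by-entry check. For the first, one computes $M(a)M(k)M(b) = \begin{pmatrix} abk + a + b & ak + 1 \\ kb + 1 & k \end{pmatrix}$ and $M(a-1)M(1)M(b-1) = \begin{pmatrix} ab - 1 & a \\ b & 1 \end{pmatrix}$, so the right-hand side of the proposed identity equals $\begin{pmatrix} a + k + b + k(ab - 1) & 1 + ka \\ 1 + kb & k \end{pmatrix}$, which agrees with $M(a)M(k)M(b)$ in all four entries. The second matrix identity is verified in the same style, with both sides yielding $\begin{pmatrix} ab(k+1) + a + b & a(k+1) + 1 \\ (k+1)b + 1 & k+1 \end{pmatrix}$.

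The main (minor) subtlety is that the intermediate entries $a-1$, $b-1$, or $b-2$ may be zero or negative, so some of the objects on the right-hand side are continued fractions only in the extended sense provided by the matrix formulation, rather than in the strict sense of the original definition. Since the matrix identities are purely algebraic in $a$, $b$, $k$, they remain valid regardless; in the applications of this lemma later in the paper, $a$ and $b$ will be large enough that the right-hand sides are genuine continued fractions. Beyond this bookkeeping there is no real obstacle --- the content of the lemma lies in the shape of the relations (mirroring the poset skein relations used later), not in their verification.
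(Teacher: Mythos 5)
Your proof is correct and follows essentially the same route as the paper: both reduce the identities to $2\times 2$ matrix identities via Lemma \ref{lem:ComputeWithMatrix} and verify them by direct computation (the paper only writes out the first identity and declares the second analogous, whereas you check both and make the sandwiching by $M(\mu_1)$ and $M(\mu_2)$ explicit). Your remark about entries like $a-1$ or $b-2$ possibly being zero or negative, so that the right-hand sides are continued fractions only in the extended matrix sense, is a fair point that the paper leaves implicit.
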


\begin{proof}
In light of Lemma \ref{lem:ComputeWithMatrix}, we can show each equation by comparing two matrix expressions. For Equation \ref{eq:ContFracSkein1}, first we compute \[
\begin{pmatrix} a & 1 \\ 1&0 \end{pmatrix} \begin{pmatrix} k & 1 \\ 1&0 \end{pmatrix}\begin{pmatrix} b & 1 \\ 1&0 \end{pmatrix} = \begin{pmatrix} akb + a + b & ak+1 \\ bk+1 &k \end{pmatrix}.
\]
Next, we see 
\begin{equation}
\begin{pmatrix} k & 0 \\ 0 & k \end{pmatrix}  \begin{pmatrix} a-1 & 1 \\ 1&0 \end{pmatrix} \begin{pmatrix} 1 & 1 \\ 1&0 \end{pmatrix}\begin{pmatrix} b-1 & 1 \\ 1&0 \end{pmatrix} = \begin{pmatrix} akb - k & ak \\ bk & k \end{pmatrix}\label{eq:matrix}
\end{equation}
and the entry-wise sum of the above with \[
\begin{pmatrix} a+k+b & 1 \\ 1&0 \end{pmatrix} \]
is the same as the matrix in Equation \ref{eq:matrix}, showing Equation \ref{eq:ContFracSkein1}. One can show Equation \ref{eq:ContFracSkein2} in an analogous way.
\end{proof}

One more useful result is the following, which is well-known.

\begin{lemma}\label{lem:Reverse}
If $a_1,\ldots,a_n$ is a finite sequence of positive integers, then \[
\mathcal{N}[a_1,\ldots,a_n] = \mathcal{N}[a_n,\ldots,a_1].
\]
\end{lemma}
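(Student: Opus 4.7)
The plan is to deduce this directly from the matrix formula in Lemma \ref{lem:ComputeWithMatrix}. The crucial observation is that each of the building-block matrices
\[
M(a) := \begin{pmatrix} a & 1 \\ 1 & 0 \end{pmatrix}
\]
is symmetric, i.e.\ $M(a)^{T} = M(a)$.

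Using this, I would form the product $P := M(a_1) M(a_2) \cdots M(a_n)$ and observe that, since transposition reverses the order of a product and preserves each $M(a_i)$, we have
\[
P^{T} = M(a_n)^{T} \cdots M(a_2)^{T} M(a_1)^{T} = M(a_n) \cdots M(a_2) M(a_1).
\]
By Lemma \ref{lem:ComputeWithMatrix}, $\mathcal{N}[a_1,\ldots,a_n]$ is the $(1,1)$ entry of $P$, and $\mathcal{N}[a_n,\ldots,a_1]$ is the $(1,1)$ entry of $P^{T}$. Since the $(1,1)$ entry of any square matrix agrees with the $(1,1)$ entry of its transpose, the two numerators coincide.

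There is essentially no obstacle here; the only subtlety is that the $a_i$ are required to be positive so that Lemma \ref{lem:ComputeWithMatrix} applies to both the forward and reversed sequences (in particular $a_n > 0$ and $a_1 > 0$ respectively), which is precisely the hypothesis of the lemma. One could also give an equivalent inductive proof by peeling off $a_n$ on one side and $a_1$ on the other, but the transpose argument is the cleanest route.
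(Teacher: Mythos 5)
Your proof is correct. The paper does not actually prove this lemma --- it simply records it as ``well-known'' --- so there is nothing to compare against; your transpose argument (each factor $\left(\begin{smallmatrix} a & 1 \\ 1 & 0 \end{smallmatrix}\right)$ is symmetric, transposition reverses the product, and the $(1,1)$ entry is fixed under transposition) is the standard proof and works exactly as you describe, with the positivity hypothesis correctly invoked so that Lemma~\ref{lem:ComputeWithMatrix} applies to both orderings. The only point worth making explicit is that the $(1,1)$ entry of the product really is $\mathcal{N}[a_1,\ldots,a_n]$ as defined (the numerator of the \emph{reduced} fraction): this follows because each factor has determinant $-1$, so $ps-qr=\pm 1$ and hence $\gcd(p,q)=1$, meaning the fraction $p/q$ produced by Lemma~\ref{lem:ComputeWithMatrix} is already in lowest terms.
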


\subsection{Snake Graphs and Fence Posets}

Here, we introduce two combinatorial objects which have parallel roles in the theory of cluster algebras from surfaces. 

A \emph{tile} is another name for a cycle graph on 4 vertices. We will draw tiles as squares and will refer to the four sides as North, East, South, and West in the natural way. If the tile is named $G$, then the north edge will be labeled $N(G)$, and similarly for the other cardinal directions.

Consider a set of tiles, $G_1,\ldots,G_d$. A \emph{snake graph}, $\mathcal{G}$, is the result of identifying pairs of edges of these graphs in such a way that, for all $1 \leq i < d$, either $N(G_i)$ is identified with $S(G_{i+1})$ or $E(G_i)$ is identified with $W(G_{i+1})$. The resulting shape is also known as a \emph{border-strip}. An example is drawn in Figure \ref{fig:SnakeGraphAndPosetExample} on the left. 

\begin{figure}
\centering
\begin{tikzpicture}
\draw(2,3) -- (1,3) -- (1,1) -- (0,1) -- (0,0) -- (2,0) -- (2,2) -- (4,2) -- (4,3) -- (2,3) ;
\draw(1,0) -- (1,1) -- (2,1);
\draw(1,2) -- (2,2) -- (2,3);
\draw(3,3) -- (3,2);
\node[] at (0.5,0.5){$G_1$};
\node[] at (1.5,0.5){$G_2$};
\node[] at (1.5,1.5){$G_3$};
\node[] at (1.5,2.5){$G_4$};
\node[] at (2.5,2.5){$G_5$};
\node[] at (3.5,2.5){$G_6$};
\draw(7,3) to node[below, scale = 0.75]{$-$} (6,3) to node[right,scale=0.75]{$-$} (6,2) to node[right,scale=0.75]{$+$}  (6,1) to node[below,scale=0.75]{$+$} (5,1) to node[right,scale=0.75]{$+$} (5,0) to node[above, scale = 0.75, red]{$-$} (6,0) to node[above, scale = 0.75]{$+$} (7,0) to node[left, scale = 0.75]{$+$} (7,1)to node[left, scale = 0.75]{$-$}  (7,2) to node[above,scale=0.75]{$-$} (8,2)  to node[above,scale=0.75]{$+$} (9,2)  to node[left,scale=0.75]{$+$} (9,3)  to node[below,scale=0.75]{$-$} (8,3) to node[below,scale=0.75]{$+$}  (7,3);
\draw(6,0) to node[right,scale=0.75,red]{$-$}  (6,1) to node[above,scale=0.75,red]{$-$} (7,1);
\draw(6,2) to node[above,scale=0.75,red]{$+$} (7,2) to node[right,scale=0.75,red]{$+$} (7,3);
\draw(8,3) to node[right,scale=0.75,red]{$-$} (8,2);
\node(1) at (10,0.5){$\boxed{1}$};
\node(2) at (11,1.5){$\boxed{2}$};
\node(3) at (12,2.5){$\boxed{3}$};
\node(4) at (13,1.5){$\boxed{4}$};
\node(5) at (14,0.5){$\boxed{5}$};
\node(6) at (15,1.5){$\boxed{6}$};
\draw(1) -- (2);
\draw(2) -- (3);
\draw(3) -- (4);
\draw(4) -- (5);
\draw(5) -- (6);
\end{tikzpicture}
\caption{On the left, we have a snake graph with 6 tiles. The sign function on the snake graph is shown in the middle, with the signs used to compute the shape colored in red. The red signs (doubling the last $-$) tell us that the snake graph has shape $3,2,2$. The fence poset of shape $3,2,2$ is drawn on the right. Here, we label elements with their chronological labels in \boxed{boxes}.}\label{fig:SnakeGraphAndPosetExample}
\end{figure}
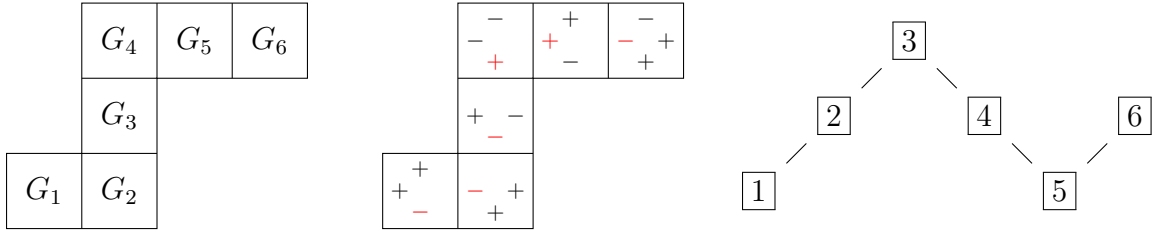

There are many ways to glue a set of tiles together. In order to distinguish these, we use a \emph{sign function}, $f$, as in \cite{CSContFrac}. This is a function from the edge set of a snake graph to the size two set $\{\pm\}$, which satisfies the following rules.
\begin{enumerate}
    \item For each tile $G_i$, $S(G_i)$ and $E(G_i)$ are labeled the same way and similarly for $N(G_i)$ and $W(G_i)$.
    \item For each tile $G_i$, $S(G_i)$ and $N(G_i)$ are labeled in the opposite way.
    \item The label of $S(G_1)$ is $-$.
\end{enumerate}

The final condition assigns a unique sign function to each snake graph; otherwise, there would be two sign functions associated to each snake graph. See the middle of Figure \ref{fig:SnakeGraphAndPosetExample} for an example of a sign function.

An \emph{internal edge} of a snake graph is one which borders two tiles. A snake graph with $d$ tiles has $d-1$ internal edges. Label these $e_1,\ldots,e_{d-1}$. Let $e_0$ denote $S(G_1)$. The \emph{sign sequence} of a snake graph is $f(e_0),f(e_1),\ldots,f(e_{d-1}),f(e_{d-1})$. We emphasize that we repeat $f(e_{d-1})$ at the end. Let $a_1,\ldots,a_n$ denote the lengths of maximal constant subseqeunces of the sign sequence; necessarily, $a_n > 1$. We call the sequence $a_1,\ldots,a_n$ the \emph{shape} of of $\mathcal{G}$. Indeed the data of the shape uniquely determines $\mathcal{G}$. For instance, the shape of the snake graph in Figure \ref{fig:SnakeGraphAndPosetExample} is 3,2,2.

Recall a \emph{perfect matching} of a graph $G = (V,E)$ is a $M \subseteq E$ such that every $v \in V$ is incident to exactly one $e \in M$. Weighted perfect matchings of snake graphs can be used to compute cluster variables in a cluster algebra of surface type \cite{musiker2011positivity}. The following beautiful result by \canakci and Schiffler shows how the shape of a snake graph also encodes the number of perfect matchings. 

\begin{theorem}[\cite{CSContFrac}]\label{thm:CountPMSWithContFrac}
If $\mathcal{G}$ is a snake graph of shape $a_1,\ldots,a_n$, then the number of perfect matchings of  $\mathcal{G}$ is $\mathcal{N}[a_1,\ldots,a_n]$.
\end{theorem}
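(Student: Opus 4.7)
The plan is to induct on the number of tiles $d$ and to show that the number $M(\mathcal{G})$ of perfect matchings of a snake graph of shape $(a_1, \ldots, a_n)$ satisfies the same three-term recurrence as $\mathcal{N}[a_1,\ldots,a_n]$. Applying Lemma \ref{lem:ComputeWithMatrix} and factoring the last factor out of the matrix product yields
$$\mathcal{N}[a_1,\ldots,a_n] \;=\; a_n \, \mathcal{N}[a_1,\ldots,a_{n-1}] + \mathcal{N}[a_1,\ldots,a_{n-2}],$$
so the task is to realize this recurrence bijectively on perfect matchings.

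For the base cases, a single tile has shape $(2)$ and exactly two matchings, $\{N,S\}$ and $\{E,W\}$, agreeing with $\mathcal{N}[2]=2$. For a straight snake graph of shape $(a_1)$ consisting of $a_1-1$ stacked tiles, a short induction on $a_1$, using the same decomposition described below but restricted to a single segment, gives $M = a_1 = \mathcal{N}[a_1]$.

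For the inductive step, I would analyze perfect matchings according to their behavior on the final straight segment---the tiles contributing to the last constant subsequence of the sign sequence. Fix the terminal boundary edge of the last tile $G_d$ opposite to its shared edge with $G_{d-1}$, and split matchings into two classes. In the first class the terminal edge is used; this constrains the matching within the final segment to one of $a_n$ ``domino configurations'' (parametrized by the position where the run of parallel dominoes switches direction), and each choice leaves behind exactly one matching of the snake graph of shape $(a_1,\ldots,a_{n-1})$ obtained by removing the final segment, contributing $a_n \, M(\mathcal{G}_{a_1,\ldots,a_{n-1}})$. In the second class the terminal edge is unused; this forces a specific pattern across the bend between segments $n$ and $n-1$, which propagates through the penultimate segment and produces a bijection with matchings of the snake graph of shape $(a_1,\ldots,a_{n-2})$, contributing the remaining $M(\mathcal{G}_{a_1,\ldots,a_{n-2}})$.

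The main obstacle is carefully tracking the forced edges through the final bend and verifying the correspondence when $n$ or $a_n$ is small, especially the case $a_n = 2$ where the final segment has fewer free choices and the claimed shape $(a_1,\ldots,a_{n-1})$ after removal must be interpreted using the sign-sequence conventions. A cleaner alternative, avoiding the explicit case analysis, would be to assign each tile a $2\times 2$ transfer matrix recording the number of partial matchings indexed by whether the incoming and outgoing internal edges are in the matching, and then show that the product of these matrices (with appropriate boundary corrections) equals the product in Lemma \ref{lem:ComputeWithMatrix}. This approach matches the continued fraction structure directly, but requires verifying that the transfer matrices compose correctly across the bends where the orientation of the internal edges flips.
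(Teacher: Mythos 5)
The paper does not prove this statement; it is quoted verbatim from \c{C}anak\c{c}{\i}--Schiffler \cite{CSContFrac}, so there is no internal proof to compare against. Your overall strategy --- induct on the number of sign-runs via the continuant recurrence $\mathcal{N}[a_1,\ldots,a_n]=a_n\,\mathcal{N}[a_1,\ldots,a_{n-1}]+\mathcal{N}[a_1,\ldots,a_{n-2}]$ and realize it by splitting matchings according to their restriction to the last segment --- is the standard route and is, in outline, how the cited result is established.

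However, as written the argument has a concrete flaw: you have the correspondence between sign-runs and geometry inverted. Under the sign-function rules, gluing two consecutive tiles in the \emph{same} direction flips the sign of the internal edge, while a \emph{turn} preserves it. Hence a maximal constant run of length $a$ corresponds to a zigzag (staircase) of $a-1$ tiles, whereas a straight stack of $d$ tiles has alternating signs, shape $(1,1,\ldots,1,2)$, and Fibonacci-many matchings --- not $d+1$. So your base case, ``a straight snake graph of shape $(a_1)$ consisting of $a_1-1$ stacked tiles has $a_1$ matchings,'' asserts the right count for the wrong graph (the staircase with $a_1-1$ tiles does have $a_1$ matchings; the stack does not), and the inductive step's ``$a_n$ domino configurations parametrized by where the run of parallel dominoes switches direction'' describes matchings of a ladder, which is not what the final segment is. Beyond this, the central bijections are asserted rather than proved: you explicitly defer the $a_n=2$ case, and there is a further truncation subtlety you do not address --- when $a_{n-1}=1$, deleting the last segment's tiles yields a snake graph of shape $(a_1,\ldots,a_{n-2}+1)$ rather than $(a_1,\ldots,a_{n-1})$, so the inductive hypothesis only applies after invoking the identity $\mathcal{N}[\ldots,a,1]=\mathcal{N}[\ldots,a+1]$. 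The recurrence and the final answer are correct, but the combinatorial decomposition that is supposed to carry the proof is pinned to the wrong geometric picture and is never actually verified.
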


For example, the continued fraction $[3,2,2]$ is equal to the rational number $\frac{17}{5}$, from which we see that the snake graph in Figure \ref{fig:SnakeGraphAndPosetExample} has 17 perfect matchings.

Cluster variables are Laurent polynomials in two variables. Computing one of these variables in terms of snake graphs can be done by putting the set of perfect matchings into a partial order. The resulting poset turns out to be a distributive lattice, and its underlying poset of join-irreducible elements is a \emph{fence poset} \cite[Theorem 5.4]{musiker2013bases}. A fence poset is a poset whose Hasse diagram is a path graph. In other words, a fence poset $\calP$ on $h$ elements, denoted $\calP(1),\ldots,\calP(h)$, is a poset which is either of the form \begin{equation}\calP(1)\prec \calP(2) \prec \cdots \prec \calP(c_1) \succ \calP(c_1 + 1) \succ \cdots \succ \calP(c_2) \prec \calP(c_2+1 )\cdots \calP(c_n) = \calP(h),\label{eq:FencePosetDef} \end{equation}
or the result of flipping every relation.

Notice that by our indexing, an element $\calP(i)$ only has cover relations with $\calP(i-1)$ and $\calP(i+1)$, when these elements exist. We refer to this as a \emph{chronological labeling}, as in \cite{banaian2024skein}. See the fence poset on the right in Figure \ref{fig:SnakeGraphAndPosetExample}. We use boxes to denote the use of chronological labeling and to distinguish these from other labels which will be used in later sections.

Every poset with more than one element has two chronological labelings. Given a poset $\calP$ with one chronological labeling, let $\overline{\calP}$ denote the same poset with reverse chronological labeling. In a drawing, this means taking a mirror image of the Hasse diagram. The notion of chronological labeling  also invites a notion of interval notation, $\calP[a,b] := \{\calP(i): a \leq i \leq b\}$. If $a > b$, define $\calP[a,b] = \emptyset$.

We return to discussions of shape. If $n$, as in Equation \ref{eq:FencePosetDef}, is 1, so that  $\calP$ is a chain which increases with the chronological labeling, we define the \emph{shape} of $\calP$ to be $a_1 = c_1+1$. Otherwise, we define the shape of $\calP$ to be the sequence of positive integers $a_1,\ldots,a_n$ defined by \[
a_i = \begin{cases} c_1 & i = 1 \\ c_i - c_{i-1} & 1 < i < n \\ c_n - c_{n-1} + 1 & i = n.
\end{cases}
\]
Notice that $a_1 = 1$ if and only if $\calP(1)$ is maximal and that $a_n > 1$. 
For example, consider the fence poset on the right of Figure \ref{fig:SnakeGraphAndPosetExample}. The numbers $c_i$ are $c_1 = 3, c_2 = 5$ and $c_3 = 6$. Therefore, the shape is $3,2,2$, recovering the sign sequence.

An \emph{order ideal} of a poset $\calP$ is a subset $I \subseteq \calP$ such that $x \in I$ and $y \preceq x$ implies $y \in I$. In other words, an order ideal is ``closed going down''. The set of all order ideals of a poset $\calP$ is denoted $J(\calP)$. The empty set is an order ideal, so here we say an empty poset has one order ideal. Given $x \in \calP$, let $\langle x \rangle$ denote the smallest order ideal containing $x$. An order ideal of the form $\langle x \rangle$ is sometimes called a principal order ideal. 

We have now indexed both snake graphs and posets with finite sequences of positive integers whose last entry is greater than 1. The connection is the following.

\begin{theorem}\label{thm:SnakeGraphAndFencePoset}
Given $a_1,\ldots,a_n$, a list of positive integers with $a_n > 1$, let $\mathcal{G}$ and $\calP$ be the snake graph and poset of shape $a_1,\ldots,a_n$ respectively. There is a bijection between the set of perfect matchings of $\mathcal{G}$ and the set of order ideals of $\calP$.
\end{theorem}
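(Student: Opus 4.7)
The plan is to realize the bijection via the standard parametrization of perfect matchings by ``twist sets'' of tiles. Let $\mathcal{G}$ have tiles $G_1, \ldots, G_d$ and let $\calP$ have chronologically labeled elements $\calP(1), \ldots, \calP(d)$. I would first single out the minimum perfect matching $M_-$ of $\mathcal{G}$, namely the matching whose edges are exactly those edges on the boundary of $\mathcal{G}$ whose sign is $-$ (together with the appropriate edge of the final tile dictated by $a_n > 1$). For any perfect matching $M$, the symmetric difference $M \triangle M_-$ bounds a disjoint union of cycles, each enclosing a run of consecutive tiles $G_i, G_{i+1}, \ldots, G_j$; let $\Phi(M) \subseteq \{G_1, \ldots, G_d\}$ be the set of enclosed tiles. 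Injectivity of $\Phi$ is immediate since $M$ is recovered from $M_-$ by flipping the pair of $M_-$-edges inside each tile of $\Phi(M)$, with cancellation on shared interior edges.

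Next I would identify the image of $\Phi$ with $J(\calP)$. An inductive analysis, moving from $G_1$ to $G_d$, shows that $T \subseteq \{G_1, \ldots, G_d\}$ arises as some $\Phi(M)$ if and only if it satisfies the following local rule: whenever $G_i$ and $G_{i+1}$ are glued in a \emph{straight} configuration, $G_{i+1} \in T$ forces $G_i \in T$; whenever $G_i$ and $G_{i+1}$ are glued in a \emph{bent} configuration, the implication reverses. Transporting $T$ to $\calP$ via $G_i \mapsto \calP(i)$, this is precisely the condition that $T$ is an order ideal of $\calP$, since the positions of direction changes in $\mathcal{G}$ are exactly the indices $c_1, c_2, \ldots, c_{n-1}$ at which the cover relations of $\calP$ reverse orientation. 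Hence the shape sequences $a_1, \ldots, a_n$ on both sides agree by construction.

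Surjectivity is handled by reversing the twist procedure: given an order ideal $I$, one builds $M$ from $M_-$ by flipping the two $M_-$-edges of each tile in $I$, processing tiles in a linear extension of $\calP$. The downward closure of $I$ guarantees that each flip leaves a valid perfect matching on the already-processed portion of $\mathcal{G}$, so the map $I \mapsto M$ is well-defined and inverse to $\Phi$.

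The main obstacle is the direction bookkeeping at the bends of $\mathcal{G}$. The three conditions on the sign function and the chronological labeling convention (that $\calP(1)$ is minimal or maximal according to whether $a_1 > 1$ or $a_1 = 1$) must be aligned so that both $\mathcal{G}$ and $\calP$ decompose into the same alternating sequence of $a_1, a_2, \ldots, a_n$ straight runs and bends; a careful local case analysis at each of the $n-1$ bends is needed to verify this alignment. As a sanity check, Theorem \ref{thm:CountPMSWithContFrac} tells us that the number of perfect matchings of $\mathcal{G}$ is $\mathcal{N}[a_1, \ldots, a_n]$, and a direct transfer-matrix count of $|J(\calP)|$ using $2 \times 2$ matrix products identical to those in Lemma \ref{lem:ComputeWithMatrix} yields the same value, confirming the cardinality side of the bijection.
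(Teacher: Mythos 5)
Your overall architecture is sound and is essentially the standard proof of the result that the paper itself does not reprove but simply cites (\cite[Theorem 5.4]{musiker2013bases}): parametrize perfect matchings by the set of tiles enclosed by $M \triangle M_-$ and identify the achievable tile sets with $J(\calP)$. However, two of the concrete claims you make while executing this are false, and the second sits at the crux of the argument. First, the boundary edges of sign $-$ do not form a perfect matching: already for a single tile the edges of sign $-$ are $S(G_1)$ and $E(G_1)$, which share the southeast vertex. The minimal matching is instead the unique perfect matching consisting only of boundary edges and containing $S(G_1)$.

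Second, and more seriously, your local rule is inverted. The direction of the cover relation between $\calP(i)$ and $\calP(i+1)$ is governed by the sign of the internal edge $e_i$, since the runs of constant sign are what define the shape on both sides; and the sign rules $f(E(G_i))=f(S(G_i))$, $f(N(G_i))=-f(S(G_i))$ give $f(e_i)=-f(e_{i-1})$ when $G_{i-1},G_i,G_{i+1}$ lie in a straight line, but $f(e_i)=f(e_{i-1})$ when the snake bends at $G_i$. Hence the zigzag of $\calP$ reverses at the \emph{straight} steps of $\mathcal{G}$ and continues through the geometric bends --- the opposite of your claimed alignment of the ``direction changes in $\mathcal{G}$'' with the indices $c_1,\ldots,c_{n-1}$. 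Concretely, for three tiles glued in a horizontal row the shape is $2,2$, the poset is $\calP(1)\prec\calP(2)\succ\calP(3)$, and the singleton $\{G_3\}$ \emph{is} an achievable twist set (flip $S(G_3),N(G_3)$ to $W(G_3),E(G_3)$ in $M_-$), whereas your rule (``straight forces $G_3\in T\Rightarrow G_2\in T$'') would forbid it and would instead produce the order ideals of a three-element chain, giving $4$ rather than $\mathcal{N}[2,2]=5$ matchings. The local rule must be phrased in terms of the sign of $e_i$ (equivalently, in terms of which pair of edges of $G_{i+1}$ lies in the current matching), not in terms of straight versus bent gluings; with that repair and the corrected description of $M_-$, your argument does go through.
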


\begin{proof}
This is a corollary of \cite[Theorem 5.4]{musiker2013bases}.
\end{proof}

More recently, a weighted version of Theorem \ref{thm:SnakeGraphAndFencePoset} was given in \cite{ezgieminecluster2024} which allows one to compute cluster variables directly from fence posets. The same result was independently shown in \cite{pilaud2023posets}. Here, we only need the fact that these sets have the same size. Combining Theorems \ref{thm:CountPMSWithContFrac} and \ref{thm:SnakeGraphAndFencePoset} allows us to quickly compute the number of order ideals of a fence poset.

\begin{cor}\label{cor:CountOrderIdeals}
If $\calP$ is a fence poset of shape $a_1,\ldots,a_n$, then the number of order ideals of $\calP$ is $\mathcal{N}[a_1,a_2,\ldots,a_n]$.
\end{cor}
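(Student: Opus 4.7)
The plan is to chain together the two theorems stated immediately before this corollary, since both invoke the very same indexing of objects by shape sequences $a_1,\ldots,a_n$ with $a_n > 1$. More precisely, let $\calP$ be the fence poset of shape $a_1,\ldots,a_n$ and let $\mathcal{G}$ be the snake graph of the same shape. First I would invoke Theorem \ref{thm:SnakeGraphAndFencePoset} to assert a bijection between the set of order ideals $J(\calP)$ and the set of perfect matchings of $\mathcal{G}$; in particular, these two sets have the same cardinality. Then I would apply Theorem \ref{thm:CountPMSWithContFrac} to conclude that the number of perfect matchings of $\mathcal{G}$ equals $\mathcal{N}[a_1,\ldots,a_n]$. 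Composing these two equalities gives $|J(\calP)| = \mathcal{N}[a_1,\ldots,a_n]$, which is the claim.

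There is essentially no obstacle here: the corollary is a formal consequence of two results that the paper has already cited and recorded. The only minor sanity check worth mentioning is the degenerate case $n=1$, where $\calP$ is a chain on $a_1$ elements; here one verifies directly that $|J(\calP)| = a_1 + 1$, which agrees with $\mathcal{N}[a_1]$ under the convention $[a_1] = a_1/1$ being interpreted so that the numerator count matches — but this case is already covered by the statements of Theorems \ref{thm:CountPMSWithContFrac} and \ref{thm:SnakeGraphAndFencePoset}, so no separate argument is needed. Thus the proof is a one-line composition of the two preceding theorems.
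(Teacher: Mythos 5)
Your proof is correct and is exactly the paper's argument: the corollary is obtained by composing Theorem \ref{thm:SnakeGraphAndFencePoset} (the bijection between order ideals of the fence poset and perfect matchings of the snake graph of the same shape) with Theorem \ref{thm:CountPMSWithContFrac}. One tiny quibble with your aside on the $n=1$ case: under the paper's conventions a fence poset of shape $a_1$ is a chain on $a_1-1$ elements (the shape of an increasing chain on $h$ elements is $c_1+1=h+1$), so it has $a_1=\mathcal{N}[a_1]$ order ideals, not $a_1+1$; as you note, though, no separate argument is needed there.
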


For example, since we previously computed that the snake graph in Figure \ref{fig:SnakeGraphAndPosetExample} has $\mathcal{N}[3,2,2] = 17$ perfect matchings, we also know the fence poset on the righthand side has 17 order ideals.

\begin{remark}
 Corollary \ref{cor:CountOrderIdeals} gives another perspective for  \cite[Lemma 2]{elizalde2021rowmotion}, which gave a recursion on the cardinality of the set of order ideals of a fence poset.
 \end{remark}

It will later be convenient to consider posets equipped with a \emph{weight function}, $w: \calP \to \mathbb{R}$. In this setting,  define \[
\mathcal{W}(\calP) := \sum_{I \in J(\calP)} \prod_{\rho \in I} w(\rho).
\]

As a convention, we extend an unweighted poset $\calP$ to a weighted one by defining $w(\rho) = 1$ for all $\rho \in \calP$. In this case, $\mathcal{W}(\calP)$ recovers the number of order ideals of $\calP$.

\subsection{Skein Relations}\label{sec:Skein}

Let a \emph{multicurve} denote a multiset of curves on a surface.
Given two curves with a designated point of intersection, the \emph{resolution} of this intersection is the pair of multicurves resulting from replacing the point of intersection, locally $\mathsf{X}$, with $\asymp$ and with \rotatebox[origin=c]{90}{$\asymp$}. Understanding skein relations has been important in the study of cluster algebras from surfaces as cluster variables satisfy an algebraic version of skein relations \cite{FT-II}. That is, if $x$ and $x'$ are cluster variables associated to curves which intersect, and the resolution of this intersection is the pair of multicurves $\Gamma^+$ and $\Gamma^-$, each of which corresponds to a product of elements of the cluster algebra, then we have \[
xx' = x_{\Gamma^+} + x_{\Gamma^-}.
\]

Indeed, Vieta jumping for ordinary Markov numbers (Equation \ref{eq:Vieta}) can be seen as a special case of a skein relation in the surface model for the Markov cluster algebra.

There have been multiple approaches to studying skein relations for cluster algebras. One, given by \canakci and Schiffler, uses the snake graph construction and is combinatorial in nature \cite{CS13}. In joint work with Kang and Kelley, we gave poset versions of these results, working in the more general setting of ``tagged arcs'' \cite{banaian2024skein}, which results in considering a slightly larger class than fence posets. We will be interested in the enumerative consequences of these results. Here, we do not need this wider generality and only consider fence posets. Therefore, while we use the language from \cite{banaian2024skein} for convenience, every result discussed also comes from simply translating \cite{CS13} to the land of fence posets. From here on, all posets will be assumed to be fence posets and to be equipped with a weight function.

Let a subset $S$ of elements in a poset $\calP$ be on \emph{bottom} if for any $\rho \in S, \sigma \in \calP \backslash S$, $\rho \not\succeq \sigma$; this is equivalent to the condition of being an order ideal. Define a set to be on \emph{top} in a parallel way. Such sets are sometimes called ``order filters.''

Given two posets $\calP_1$ and $\calP_2$, if there is a weight-preserving isomorphism between two subposets of the form $R_1 = \calP_1[c,d]$ and $R_2 = \calP_2[c',d']$, we say they have an \emph{overlap}. We emphasize that this isomorphism must be consistent with the two chronological orderings.  We say this is moreover a \emph{crossing overlap} if \begin{itemize}
    \item $R_1$ is on top of $\calP_1$ and $R_2$ is on bottom of $\calP_2$; 
    \item we do not have both $c = 1$ and $c' = 1$; and 
    \item we do not have both $d = h_1$ and $d' = h_2$ where $h_i = \vert \calP_i \vert$.
\end{itemize}

\begin{definition}\label{def:Type0Resolve}
Given a pair of posets $\calP_1$ and $\calP_2$ with a crossing overlap $R_1 = \calP_1[c,d] \cong \calP_2[c',d'] = R_2$, define the \emph{Type 0 resolution} of $\calP_1$ and $\calP_2$ to be $\{\calP_3, \calP_4\} \cup \{\calP_5, \calP_6\}$ where these new posets are defined as follows. The associated weight functions are defined in the natural way.
\begin{itemize}
    \item Let $\calP_3$  be the poset on $\calP_1[1,d] \cup \calP_2[d'+1,h_1]$ with all induced relations as well as $\calP_1(d)\prec \calP_2(d'+1)$.
    \item Let $\calP_4$  be the poset on $\calP_2[1,d'] \cup \calP_2[d+1,h_2]$ with all induced relations as well as $\calP_2(d')\succ\calP_2(d+1)$.
    \item The construction of $\calP_5$ depends on $c$ and $c'$.
    \begin{itemize}
        \item If $c>1$ and $c'>1$, then $\calP_5$ is the poset on $\calP_1[1,c-1] \cup \calP_2[1,c'-1]$ with all induced relations as well as $\calP_1(c-1) \succ \calP_2(c'-1)$.
        \item If $c = 1$, implying $c' > 1$, let $v<c'-1$ be the largest integer such that $\calP_2(v) \not\prec \calP_2(c'-1)$, if it exists,  and otherwise let $v = 0$. Let $\calP_5 $  be the induced subposet $\calP_2[1,v]$.
        \item If $c' = 1$, implying $c > 1$, let $u<c-1$ be the largest integer such that $\calP_1(u) \not\succ \calP_1(c-1)$, if it exists,  and otherwise let $u = 0$. Let $\calP_5 $  be the induced subposet $\calP_1[1,u]$. 
    \end{itemize}
    \item There are three parallel cases to construct $\calP_6$, analyzing $d$ and $d'$. See \cite[Definition 8.2]{BG}, where ``$s$'' denotes $c$ here and ``$t$'' denotes $d$ here.
\end{itemize}
\end{definition}

 Definition \ref{def:Type0Resolve} was called a ``Type 0 resolution'' as it corresponds to a certain type of geometric intersection with respect to a triangulation; see Section \ref{sec:InducedOrdering}. There are several other ways in which two curves can intersect. These are only apparent from the posets when the elements are labeled with curves from a triangulation. Any pair of abstract fence posets of given shape can realize such intersections. Therefore, here we discuss them without referencing when they apply.

\begin{definition}\label{def:Type1Resolution}
Given a pair of posets $\calP_1$ and $\calP_2$, choose $1 \leq i < h_2 := \vert \calP_2 \vert$. Chronologically label $\calP_2$ such that $\calP_2(i) \succ \calP_2(i+1)$. Define the \emph{Type 1 resolution} of $\calP_1$ and $\calP_2$ with respect to $i$ to be $\{\calP_3, \calP_4\} \cup \{\calP_5, \calP_6\}$ where these new posets are defined as follows. The associated weight functions are defined in the natural way.
\begin{itemize}
    \item  Let $\calP_3$  be the poset on $\calP_1 \cup \calP_2[1,i]$ with all induced relations as well as $\calP_2(i) \prec \calP_1(1)$.
    \item Let $v>i$ be the smallest integer such that $\calP_2(i) \not\succ \calP_2(v)$, if it exists; otherwise let $v = h_2 + 1$. Let $\calP_4$ be the induced poset on $\calP_2[v,h_2]$.
    \item Let $\calP_5$  be the poset on $\calP_1 \cup \calP_2[i+1,h_2]$ with all induced relations as well as $\calP_2(i+1) \succ \calP_1(1)$.
    \item Let $u<i$ be the largest integer such that $\calP_i(i) \not\prec \calP_2(u)$, if it exists; otherwise let $u=0$. Let  $\calP_6$ be the induced poset on $\calP_2[1,u]$.
\end{itemize}
\end{definition}

\begin{definition}\label{def:Type2Resolution}
Given a pair of posets $\calP_1$ and $\calP_2$, define the \emph{Type 2 resolution} of $\calP_1$ and $\calP_2$ to be $\{\calP_3, \calP_4\} \cup \{\calP_5, \calP_6\}$ where these new posets are defined as follows. The associated weight functions are defined in the natural way. \begin{itemize}
    \item  Let $\calP_3$  be poset on $\calP_1 \cup \calP_2$ with all induced relations as well as $\calP_1(1) \succ \calP_2(1)$.
    \item Let $\calP_4$ be the empty poset. 
    \item Let $v$ be the smallest integer such that $\calP_1(1) \not\succeq  \calP_1(v)$, if it exists; otherwise set $v = \vert \calP_1 \vert=:h_1$. Let $\calP_5$ be the induced poset on $\calP_1[v,h_1]$.
    \item Let $u$ be the smallest integer such that $\calP_2(1) \not\preceq  \calP_2(u)$, if it exists; otherwise set $u = \vert \calP_2 \vert=:h_2$. Let $\calP_6$ be the induced poset on $\calP_2[u,h_2]$.
\end{itemize}
\end{definition}

\begin{prop}\label{prop:SkeinRelationsOnPosets}
Let $\calP_1$ and $\calP_2$ be a pair of posets. Suppose $\calP_3,\calP_4,\calP_5$ and $\calP_6$ are four posets satisfying one of the following conditions.
\begin{enumerate}
    \item The pair $\calP_1$ and $\calP_2$ has a crossing overlap and $\{\calP_3,\calP_4\} \cup \{\calP_5,\calP_6\}$ is the Type 0 resolution.
    \item The set $\{\calP_3,\calP_4\} \cup \{\calP_5,\calP_6\}$ is the Type 1 resolution of $\calP_1$ and $\calP_2$ with respect to some $1 \leq i < \vert \calP_2 \vert$.
    \item The set $\{\calP_3,\calP_4\} \cup \{\calP_5,\calP_6\}$ is the Type 2 resolution of $\calP_1$ and $\calP_2$. 
\end{enumerate}
Then, there exists a monomial $W$ such that  \[
\mathcal{W}(\calP_1) \mathcal{W}(\calP_2)  = \mathcal{W}(\calP_3) \mathcal{W}(\calP_4)  + W \cdot \mathcal{W}(\calP_5) \mathcal{W}(\calP_6) .
\]
\end{prop}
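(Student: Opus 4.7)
The plan is to reduce the identity to the snake graph skein relations of \canakci and Schiffler \cite{CS13} using the shape-preserving correspondence of Theorem \ref{thm:SnakeGraphAndFencePoset}. For each $i \in \{1,\ldots,6\}$, let $\mathcal{G}_i$ denote the snake graph with the same shape as $\calP_i$ (where in Type 2 the empty poset $\calP_4$ corresponds to the empty snake graph, which by convention has one perfect matching). Theorem \ref{thm:SnakeGraphAndFencePoset} then identifies $\vert J(\calP_i) \vert$ with the number of perfect matchings of $\mathcal{G}_i$, so the desired identity is equivalent to the analogous statement about the perfect matching counts of the six snake graphs.

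The key step is to verify that, for each of the three types of poset resolution, the sextuple $(\mathcal{G}_1,\ldots,\mathcal{G}_6)$ is exactly the sextuple produced by one of the snake graph skein resolutions in \cite{CS13} applied to $(\mathcal{G}_1,\mathcal{G}_2)$. For Type 0, the crossing overlap data $R_1 = \calP_1[c,d] \cong \calP_2[c',d']$ together with the top/bottom conditions translate into a common subgraph of $\mathcal{G}_1$ and $\mathcal{G}_2$ sitting on opposite sides of each, and the definitions of $\calP_3, \calP_4, \calP_5, \calP_6$ by cutting at $d, d'$ and $c, c'$ literally mirror, in fence-poset language, the snake graph cut-and-paste for the crossing resolution. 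Types 1 and 2 correspond analogously to snake graph grafting operations: the prescribed cover relations $\calP_2(i) \prec \calP_1(1)$ and $\calP_1(1) \succ \calP_2(1)$ at the seam encode how to graft $\mathcal{G}_1$ onto $\mathcal{G}_2$, and the truncated posets describe the residual snake graphs obtained by cutting at the grafting point. Once the snake graph skein relation is applied, Theorem \ref{thm:SnakeGraphAndFencePoset} translates the identity back to order ideals.

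The main obstacle is the boundary-case bookkeeping in Type 0, where the degenerate values $c=1$, $c'=1$, $d=h_1$, $d'=h_2$ in Definition \ref{def:Type0Resolve} cause $\calP_5$ or $\calP_6$ to be truncated by an additional unit or to become empty, and each such case must be matched with the corresponding degenerate snake graph resolution. A cleaner self-contained alternative is to invoke Corollary \ref{cor:CountOrderIdeals} to rewrite the identity as one of the form $\mathcal{N}[\alpha]\mathcal{N}[\beta] = \mathcal{N}[\gamma]\mathcal{N}[\delta] + \mathcal{N}[\epsilon]\mathcal{N}[\zeta]$, where the shapes are read off from the resolution rules, and then verify it by a direct matrix computation via Lemma \ref{lem:ComputeWithMatrix} along with applications of Lemmas \ref{lem:ContFracSkein} and \ref{lem:Reverse}; here Lemma \ref{lem:Reverse} accommodates pieces that are glued from the reversed end of a chronological labeling.
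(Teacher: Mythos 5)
Your proposal is correct and takes essentially the same route as the paper: the paper's proof simply cites the specialization of the poset skein relations of \cite{banaian2024skein} to $1$, ``or equivalently'' the translation of the snake graph calculus of \cite{CS13} into poset language, and your argument is a fleshed-out version of that second option (using Theorem \ref{thm:SnakeGraphAndFencePoset} to convert order-ideal counts to perfect-matching counts and back). The case-by-case matching of shapes that you flag as the main obstacle is precisely the content the paper delegates to those references.
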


\begin{proof}
This is the consequence of \cite[Propositions 6,7,8]{banaian2024skein}, or equivalently from translating the results from \cite{CS13} to statements concerning posets.
\end{proof}

As we will be focusing on positive weights and largely using Proposition \ref{prop:SkeinRelationsOnPosets} for inequalities, the precise formula for $W$ is not important here. 

\begin{example}\label{ex:Type0Resolution}
First, consider the two posets $\calP_1$ and $\calP_2$ below. We denote elements in each with their chronological label, using primes for $\calP_2$. These two posets have a crossing overlap in $\calP_1[2,4] \cong \calP_2[1',3']$.

\begin{center}
\begin{tikzpicture}
\filldraw [rounded corners=5pt, red!15]
(10.3,1.2)  
-- (11.8,3) 
-- (12.4,3) 
-- (13.8,1.2)  
-- (12.7,1.2) 
-- (12,2)
-- (11.4,1.2)
-- cycle;
\filldraw [rounded corners=5pt, red!15]
(15.3,0.7)  
-- (16.8,2.5) 
-- (17.4,2.5) 
-- (18.8,.7)  
-- (17.7,.7) 
-- (17,1.5)
-- (16.4,.7)
-- cycle;
 \node(1) at (10,0.5){$\boxed{1}$};
\node(2) at (11,1.5){$\boxed{2}$};
\node(3) at (12,2.5){$\boxed{3}$};
\node(4) at (13,1.5){$\boxed{4}$};
\node(5) at (14,0.5){$\boxed{5}$};
\node(6) at (15,1.5){$\boxed{6}$};
\draw(1) -- (2);
\draw(2) -- (3);
\draw(3) -- (4);
\draw(4) -- (5);
\draw(5) -- (6); 
%%%%%%%%%%%%%%%%%%%
\node[] at (12.5,0){\highlight{$\calP_1$}};
\node(1') at (16,1){$\boxed{1'}$};
\node(2') at (17,2){$\boxed{2'}$};
\node(3') at (18,1){$\boxed{3'}$};
\node(4') at (19,2){$\boxed{4'}$};
\draw(1') -- (2');
\draw(2') -- (3');
\draw(3') -- (4');
\node[] at (17.5,0){\highlight{$\calP_2$}};
\end{tikzpicture}
\end{center}

The resolution consists of the following posets. We do not draw $\calP_5$ as it is empty. 

\begin{center}
\begin{tikzpicture}
 \node(1) at (10,0){$\boxed{1}$};
\node(2) at (11,1){$\boxed{2}$};
\node(3) at (12,2){$\boxed{3}$};
\node(4) at (13,1){$\boxed{4}$};
\node(5) at (14,2){$\boxed{4'}$};
\draw(1) -- (2);
\draw(2) -- (3);
\draw(3) -- (4);
\draw(4) -- (5);
\node[] at (12,-0.5){\highlight{$\calP_3$}};
\node(1') at (15,1){$\boxed{1'}$};
\node(2') at (16,2){$\boxed{2'}$};
\node(3') at (17,1){$\boxed{3'}$};
\node(4') at (18,0){$\boxed{5}$};
\node(5') at (19,1){$\boxed{6}$};
\draw(1') -- (2');
\draw(2') -- (3');
\draw(3') -- (4');
\draw(4') -- (5');
\node[] at (17,-0.5){\highlight{$\calP_4$}};
\node(10) at (20,0) {$\boxed{4'}$};
\node(11) at (21,1){$\boxed{5}$};
\node(12) at (22,2){$\boxed{6}$};
\draw(10) -- (11);
\draw(11) -- (12);
\node[] at (21,-0.5){\highlight{$\calP_6$}};
\end{tikzpicture}
\end{center}

First set the weight of each element to 1. One can check the existence of a bijection between the sets of order ideals using Corollary \ref{cor:CountOrderIdeals}. As previously noted, $\calP_1$ has 17 order ideals. One can further compute $\calP_2$ has $\mathcal{N}[2,1,2] = 8$ order ideals and similarly $\calP_3$ has 11, $\calP_4$ has 12, $\calP_5$ has 1, and $\calP_6$ has 4. Indeed, $17 \cdot 8 = 136 =  11 \cdot 12 + 1 \cdot 4$. If we consider a more general weight polynomial with $w_i = w(\calP_1(i))$ and $w_i' = w(\calP_2(i))$, then we can calculate $\mathcal{W}(\calP_1) \mathcal{W}(\calP_2) = \mathcal{W}(\calP_3) \mathcal{W}(\calP_4) + w_1'w_2'w_3' \mathcal{W}(\calP_5) \mathcal{W}(\calP_6)$.
\end{example}

\begin{example}\label{ex:Type2Resolution}
We demonstrate the application of  Definition \ref{def:Type2Resolution} to $\calP_1$ and $\calP_2$ from Example \ref{ex:Type0Resolution}. First, we draw $\calP_3$.

\begin{center}
\begin{tikzpicture}
\node(4') at (6,0.5){$\boxed{4'}$};
\node(3') at (7,-0.5){$\boxed{3'}$};
\node(2') at (8,0.5){$\boxed{2'}$};
\node(1') at (9,-0.5){$\boxed{1'}$};
 \node(1) at (10,0.5){$\boxed{1}$};
\node(2) at (11,1.5){$\boxed{2}$};
\node(3) at (12,2.5){$\boxed{3}$};
\node(4) at (13,1.5){$\boxed{4}$};
\node(5) at (14,0.5){$\boxed{5}$};
\node(6) at (15,1.5){$\boxed{6}$};
\draw(1) -- (2);
\draw(2) -- (3);
\draw(3) -- (4);
\draw(4) -- (5);
\draw(5) -- (6); 
\draw(1') -- (1);
\draw(1') -- (2');
\draw(2') -- (3');
\draw(3') -- (4');
\node[] at (10.5,-0.5){\highlight{$\calP_3$}};
\end{tikzpicture}
\end{center}

Now, the number $v$ from Definition \ref{def:Type2Resolution} is 2 since $\calP_1(1) \not \succeq  \calP_1(2)$. The number $u$ is $3$ since $\calP_2(1) \preceq \calP_2(2)$ but $\calP_2(1) \not\preceq  \calP_2(3)$. Therefore, the posets $\calP_5$ and $\calP_6$ are as below.

\begin{center}
\begin{tikzpicture}
\node(2) at (1,1.5){$\boxed{2}$};
\node(3) at (2,2.5){$\boxed{3}$};
\node(4) at (3,1.5){$\boxed{4}$};
\node(5) at (4,0.5){$\boxed{5}$};
\node(6) at (5,1.5){$\boxed{6}$};
\draw(2) -- (3);
\draw(3) -- (4);
\draw(4) -- (5);
\draw(5) -- (6); 
\node[] at (3,0.5){\highlight{$\calP_5$}};
\node(3') at (7,1){$\boxed{3'}$};
\node(4') at (8,2){$\boxed{4'}$};
\draw(3') -- (4');
\node[] at (7.8,0.5){\highlight{$\calP_6$}};
\end{tikzpicture}
\end{center}

Using continued fractions, we can numerically verify Proposition \ref{prop:SkeinRelationsOnPosets} when all weights are 1. In Example \ref{ex:Type0Resolution}, we saw $\vert J(\calP_1) \vert \cdot \vert J(\calP_2 ) \vert = 17 \cdot 8 = 136$. The shape of $\calP_3$ is $1,1,1,1,3,2,2$, the shape of $\calP_5$ is $2,2,2$, and the shape of $\calP_6$ is 3. Indeed, we have \[
\mathcal{N}[1,1,1,1,3,2,2] + \mathcal{N}[2,2,2] \cdot \mathcal{N}[3] = 100 + 12 \cdot 3 = 136.
\]
If we consider a more general weight polynomial with $w_i = w(\calP_1(i))$ and $w_i' = w(\calP_2(i))$, then we can calculate $\mathcal{W}(\calP_1) \mathcal{W}(\calP_2) = \mathcal{W}(\calP_3) \mathcal{W}(\calP_4) + w_1 \mathcal{W}(\calP_5) \mathcal{W}(\calP_6)$.
\end{example}

From the point of view of Corollary \ref{cor:CountOrderIdeals}, Proposition \ref{prop:SkeinRelationsOnPosets} can alternately be viewed as a family of relations on numerators of continued fractions. There are many such formulas, based on the various local configurations of these poset families. We present two of these formulas.

\begin{cor}
Let $a_1,\ldots,a_n$ and $b_1,\ldots,b_m$ be two finite sequences of positive integers.  Let $\alpha$ denote the sequence $a_1,\ldots,a_n$, $\beta$ denote the sequence $b_1,\ldots,b_m$, and $\overline{\alpha}$ denote the reverse sequence $a_n,\ldots,a_1$.

\begin{enumerate}
    \item Let $1 < i < m$ and let $b',b'' \in \mathbb{Z}_{\geq 0}$ satisfy $b' + b'' + i = b_i$. Let $\beta'$ denote the sequence $b_1,\ldots,b_{i-2}$ and $\beta''$ the sequence $b_{i+2},\ldots,b_m$. We have \[
\mathcal{N}[\alpha] \mathcal{N}[\beta] = \mathcal{N}[\beta',b_{i-1},b',\alpha]\mathcal{N}[b_{i+1}+1,\beta''] + \mathcal{N}[\overline{\alpha},1,b'',b_{i+1},\beta''] \mathcal{N}[\beta',b_{i-1}+1]
    \]
    \item Suppose $a_1 = 1$ and $b_1 > 1$. We have \[
\mathcal{N}[\alpha]\mathcal{N}[\beta] = \mathcal{N}[\overline{\alpha},\beta] + \mathcal{N}[a_2,\ldots,a_n]\mathcal{N}[b_2,\ldots,b_n]
    \]
\end{enumerate}
\end{cor}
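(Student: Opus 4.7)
The plan is to deduce each identity from Proposition~\ref{prop:SkeinRelationsOnPosets} applied to an appropriately chosen pair of fence posets of shapes $\alpha$ and $\beta$, then translate the resulting cardinality identity into a continued-fraction identity via Corollary~\ref{cor:CountOrderIdeals}. For each part I would (i) build fence posets $\calP_1, \calP_2$ of shapes $\alpha$ and $\beta$; (ii) select the appropriate resolution type; (iii) read off the shapes of the four posets $\calP_3, \calP_4, \calP_5, \calP_6$ produced by the resolution; and (iv) simplify the resulting continued-fraction expressions with the aid of Lemma~\ref{lem:Reverse} and the standard identity $\mathcal{N}[1, x, y_1, \ldots] = \mathcal{N}[x+1, y_1, \ldots]$ (easily verified via Lemma~\ref{lem:ComputeWithMatrix}).

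For part (2), I would apply the Type 2 resolution of Definition~\ref{def:Type2Resolution}. The hypothesis $a_1 = 1$ ensures that $\calP_1(1)$ is maximal in $\calP_1$, while $b_1 > 1$ ensures that $\calP_2(1)$ is minimal and is covered by $\calP_2(2)$. Consequently, $\calP_3 = \calP_1 \cup \calP_2$ with the new cover $\calP_1(1) \succ \calP_2(1)$ is again a fence poset; traversing its Hasse path from $\calP_1(h_1)$ to $\calP_2(h_2)$ and invoking Lemma~\ref{lem:Reverse} yields shape $\overline{\alpha}, \beta$. The poset $\calP_4$ is empty, so $|J(\calP_4)| = 1$. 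Working through the definitions gives $v = 3$ and $u = b_1 + 1$, so $\calP_5 = \calP_1[3, h_1]$ has shape $a_2, \ldots, a_n$ and $\calP_6 = \calP_2[b_1 + 1, h_2]$ has shape $1, b_2-1, b_3, \ldots, b_m$; the identity $\mathcal{N}[1, x, y_1, \ldots] = \mathcal{N}[x+1, y_1, \ldots]$ rewrites $|J(\calP_6)|$ as $\mathcal{N}[b_2, \ldots, b_m]$, and Corollary~\ref{cor:CountOrderIdeals} delivers the desired identity.

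For part (1), I would apply the Type 1 resolution to $\calP_1$ and $\calP_2$, choosing the pivot chronological index inside the $i$-th monotone segment of $\calP_2$ so that the splitting parameters $b', b''$ record how that segment is subdivided (so $\calP_2(\text{pivot})$ is the element such that $b'$ elements of that segment lie to its left and $b''$ to its right). Unpacking Definition~\ref{def:Type1Resolution}: $\calP_3$ glues $\calP_1$ onto $\calP_2[1, \text{pivot}]$, producing shape $\beta', b_{i-1}, b', \alpha$; $\calP_4 = \calP_2[v, h_2]$ has shape $b_{i+1} + 1, \beta''$; $\calP_5$ glues $\calP_1$ in the opposite orientation onto $\calP_2[\text{pivot}+1, h_2]$, producing shape $\overline{\alpha}, 1, b'', b_{i+1}, \beta''$; and $\calP_6 = \calP_2[1, u]$ has shape $\beta', b_{i-1}+1$. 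Substituting into the identity $|J(\calP_1)|\,|J(\calP_2)| = |J(\calP_3)|\,|J(\calP_4)| + |J(\calP_5)|\,|J(\calP_6)|$ and applying Corollary~\ref{cor:CountOrderIdeals} produces formula (1).

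The main obstacle is bookkeeping the shape changes at each gluing. One has to keep track of which direction the Hasse diagram is heading at every junction, apply Lemma~\ref{lem:Reverse} to absorb direction flips caused by traversing one of the summands ``in reverse,'' and correctly determine the cut parameters $u, v$ coming from the boundary conventions in Definitions~\ref{def:Type0Resolve}--\ref{def:Type2Resolution}. A careful case check—possibly separating based on the parity of $n$ and $m$ so that the first and last segments of the composite shapes land in the correct direction—together with systematic use of the $\mathcal{N}[1, x, \ldots] = \mathcal{N}[x+1, \ldots]$ identity, should reduce every expression into the normal form appearing in the corollary.
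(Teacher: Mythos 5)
Your overall strategy---apply Proposition \ref{prop:SkeinRelationsOnPosets} with a Type 1 (resp.\ Type 2) resolution and translate via Corollary \ref{cor:CountOrderIdeals}---is the same as the paper's, but the shape bookkeeping you assert for part (2) is exactly where the argument fails, and it cannot be repaired because the displayed identity does not match what the resolution actually produces. Two concrete problems. First, $v=3$ is wrong: since $a_1=1$ forces $\calP_1(1)\succ\calP_1(2)\succ\cdots\succ\calP_1(1+a_2)$, the smallest $v$ with $\calP_1(1)\not\succeq\calP_1(v)$ is $v=a_2+2$, so $\calP_5=\calP_1[a_2+2,h_1]$ has shape $a_3,\ldots,a_n$, not $a_2,\ldots,a_n$. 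Second, $\calP_3$ does not have shape $\overline{\alpha},\beta$: reading from $\calP_1(h_1)$, the new cover $\calP_1(1)\succ\calP_2(1)$ is an isolated descending step, and the first ascending run of $\calP_2$ then contributes only $b_1-1$ covers (an initial shape entry $b_1$ counts $b_1$ elements, hence $b_1-1$ covers), so the shape of $\calP_3$ is $a_n,\ldots,a_2,1,b_1-1,b_2,\ldots,b_m$, whose numerator is genuinely different from $\mathcal{N}[\overline{\alpha},\beta]$.

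A minimal example exposes both issues: take $\alpha=(1,2)$ and $\beta=(2,2)$, so $\calP_1$ is the chain $\calP_1(1)\succ\calP_1(2)$ and $\calP_2$ is $\calP_2(1)\prec\calP_2(2)\succ\calP_2(3)$, with $\mathcal{N}[1,2]\,\mathcal{N}[2,2]=3\cdot 5=15$. The Type 2 resolution gives $\calP_3$ equal to the five-element zigzag of shape $2,1,1,2$ with $\mathcal{N}[2,1,1,2]=13$, while $\calP_4=\calP_5=\emptyset$ and $\calP_6$ is a single element, so $13\cdot1+1\cdot2=15$ as required; the claimed right-hand side is instead $\mathcal{N}[2,1,2,2]+\mathcal{N}[2]\mathcal{N}[2]=19+4=23$. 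The identity this argument actually proves is $\mathcal{N}[\alpha]\mathcal{N}[\beta]=\mathcal{N}[a_n,\ldots,a_2,1,b_1-1,b_2,\ldots,b_m]+\mathcal{N}[a_3,\ldots,a_n]\,\mathcal{N}[b_2,\ldots,b_m]$, not the displayed formula. (Note also that the literal ``otherwise set $v=h_1$'' clause of Definition \ref{def:Type2Resolution} would make $\calP_5$ a singleton here and break the count; the relation only balances with $\calP_5=\emptyset$.) Part (1) carries the same risk: you assert the shapes of $\calP_3,\ldots,\calP_6$ without deriving them, so the run-length accounting at each junction must be redone and the resulting formula tested on a small instance before it can be trusted.
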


\begin{proof}
Part 1 comes from applying Proposition \ref{prop:SkeinRelationsOnPosets} for a resolution of Type 1 when the element $\calP_2(i)$ is incomparable with $\calP_2(1)$ and $\calP_2(\vert \calP_2 \vert)$, i.e., this element does not lie on the first or last maximal chain of $\calP_2$. Part 2 comes from applying the same Proposition for a resolution of Type 2 where $\calP_1(1)$ is maximal and $\calP_2(1)$ is minimal.
\end{proof}

We will also consider a construction related to a single poset which is inspired by certain self-intersecting curves on an orbifold.  
 We say a poset $\calP$ has a \emph{reverse self-overlap} if it has two subposets $\calP[c_1,d_1]$ and $\calP[c_2,d_2]$ which are related by a isomorphism $\Psi$ which acts by $\Psi(c_1 + i) = \Psi(d_2-i)$ for all $0 \leq i \leq d_1 - c_1 = d_2 - c_2$ and such that the weight of $\calP(c_1+i)$ is equal to the weight of $\calP(d_2-i)$ for all $0 \leq i < d_1-c_1$. We moreover say that a reverse self-overlap is \emph{crossing} if $\calP[c_1,d_1]$ is on top, $\calP[c_2,d_2]$ is on bottom, and we do not have both $c_1 = 1$ and $d_2 = \vert \calP \vert$.  Finally, we say that a reverse crossing self-overlap is \emph{kissing} if $c_2 = d_1 + 1$. In particular, if $\calP$ has a  reverse crossing self-overlap, then $\calP(d_1) \succ \calP(c_2)$.
 
In joint work with Gyoda, we considered the poset skein relation coming from reverse kissing self-overlaps. 

\begin{definition}\label{def:ResolveRKSO}\cite[Definition 8.9]{BG} 
Given a weighted poset $\calP$ with a reverse kissing self-overlap in $\calP[c_1,d_1]$ and $\calP[c_2,d_2]$, define the \emph{resolution} of $\calP$ to be $\{\calP_{34}\} \cup \{\calP_{56}\}$ where these new posets are defined as follows. 
\begin{itemize}
    \item Let $\calP_{34}$ be the poset on $\calP$ where we have all the same relations except $\calP(d_1) \prec \calP(c_2)$. Define the weight function on $\calP_{34}$ to be $w(\calP_{34}(i)) = w(\calP(i))$ if $i \neq d_1,c_2$, $w(\calP_{34}(d_1)) = w(\calP(c_2))$ and $w(\calP_{34}(c_2)) = w(\calP(d_1))$.
    \item The definition of $\calP_{56}$ depends on $c_1$ and $d_2$. In each case, the weight function is defined in the natural way. Let $h:= \vert \calP \vert$.
    \begin{itemize}
        \item If $c_1>1$ and $d_2 < \vert \calP \vert = h$, let $\calP_{56}$ be the poset on $\calP[1,c_1-1] \cup \calP[d_2+1,h]$ with all induced relations as well as $\calP_{56}(c_1-1) \succ \calP_{56}(d_2+1)$.
        \item If $c_1 = 1$, let $v > d_2+1$ be the smallest integer such that $\calP(v) \not\prec \calP(d_2+1)$, it is exists, and otherwise set $v = h+1$. Let $\calP_{56} = \calP[v,h]$.
        \item If $d_2 = h$, we define $\calP_{56}$ in a parallel way. 
    \end{itemize} 
\end{itemize}
\end{definition}

\begin{prop}\cite[Proposition 8.12]{BG}\label{prop:RKO}
If $\calP$ is a poset with a reverse kissing self-overlap in $\calP[c_1,d_1]$ and $\calP[c_2,d_2]$, then there is a monomial $W$ such that \[
\mathcal{W}(\calP) = \mathcal{W}(\calP_{34}) + W\cdot\mathcal{W}(\calP_{56}).
\]
\end{prop}

\begin{example}
Consider the poset $\calP_1$ from Example \ref{ex:Type0Resolution} as a weighted poset with all elements weighted 1. Then, $\calP_1$ has a self-crossing overlap in $\calP[2,3]$ and $\calP[4,5]$. We reproduce this poset below as well as the posets $\calP_{34}$ and $\calP_{56}$ from Definition \ref{def:ResolveRKSO}.

\begin{center}
\begin{tikzpicture}
\filldraw [rounded corners=5pt, red!15]
(10.3,1.2)  
-- (11.8,3) 
-- (12.4,3) 
-- (12.4,2.3)
-- (11.4,1.2)
-- cycle;
\filldraw [rounded corners=5pt, red!15]
(12.4,1.8)  
-- (12.8,2.3) 
-- (14.6,0.5) 
-- (13.7,-0.1)
-- cycle;
 \node(1) at (10,0.5){$\boxed{1}$};
\node(2) at (11,1.5){$\boxed{2}$};
\node(3) at (12,2.5){$\boxed{3}$};
\node(4) at (13,1.5){$\boxed{4}$};
\node(5) at (14,0.5){$\boxed{5}$};
\node(6) at (15,1.5){$\boxed{6}$};
\draw(1) -- (2);
\draw(2) -- (3);
\draw(3) -- (4);
\draw(4) -- (5);
\draw(5) -- (6); 
\node[] at (12.5,0){\highlight{$\calP_1$}};
%%%%%%%%%%%%%%%%%%%
 \node(1) at (16,0.5){$\boxed{1}$};
\node(2) at (17,1.5){$\boxed{2}$};
\node(3) at (18,2.5){$\boxed{3}$};
\node(4) at (19,3.5){$\boxed{4}$};
\node(5) at (20,2.5){$\boxed{5}$};
\node(6) at (21,3.5){$\boxed{6}$};
\draw(1) -- (2);
\draw(2) -- (3);
\draw(3) -- (4);
\draw(4) -- (5);
\draw(5) -- (6); 
\node[] at (18.5,0){\highlight{$\calP_{34}$}};
%%%%%%%%%%%%%%%%%%%%
\node(1) at (23,2.5){$\boxed{1}$};
\node(6) at (24,1.5){$\boxed{6}$};
\draw(1) -- (6);
\node[] at (23.5,0){\highlight{$\calP_{56}$}};
\end{tikzpicture}
\end{center}

As previously noted, $\calP_1$ has 17 order ideals. The poset $\calP_{34}$ has $\mathcal{N}[4,1,2] = 14$ order ideals and $\calP_{56}$ clearly has 3 order ideals. 
If we consider a more general weight polynomial with $w_i = w(\calP_1(i))$, then one can compute  $\mathcal{W}(\calP_1) = \mathcal{W}(\calP_{34}) + w_4 w_5 \mathcal{W}(\calP_{56})$.
\end{example}

\section{Two Poset Constructions}\label{sec:TwoPosets}

In \cite{BG} we constructed a labeled, weighted fence poset $\calP_\gamma$ associated to any curve $\gamma$ on the lattice $\mathbb{Z}^2$ with integral endpoints which does not contain integral points in its interior. Refer to such a curve as an \emph{arc}; we will also orient each arc in one of two directions. Whereas therein elements were labeled with Laurent monomials, here we will specialize all formal variables to 1 and have rational weights. For book-keeping, we will sometimes label the elements as well. When the weights can vary, we will refer to poset elements  by $(l,w)$ where $l$ is the label and $w$ is the weight. Soon, we will focus on posets with all elements weighted with 1, and then we will just refer to labels. We emphasize that this is distinct from the chronological labeling discussed in the previous section and in particular multiple elements of a poset can have the same label.

Let $\mathcal{L}$ be the lattice of all arcs (or, ``line segments'') in $\mathbb{R}^2$ of slope 0, $\infty$, and $-1$ with endpoints being integral.  
 We will assume all arcs have a minimal set of intersections with $\mathcal{L}$. In particular, an arc will never cross the same arc  in $\mathcal{L}$ two consecutive times. We will also always choose one of two possible orientations for an arc. This induces an indexing on the arc's intersections with $\mathcal{L}$.

\begin{algorithm}\label{algo:Old}
Let $\gamma$ be an arc on $\mathbb{Z}^2$.  If $\gamma$ does not intersect any line segments in $\mathcal{L}$, then $\calP_\gamma$ is empty.

Otherwise, suppose the first line segment crossed by $\gamma$ is $\tau_a$. If $k = 0$, then introduce one new element $\calP_\gamma(1) = (\tau_a,1)$ to $\calP_\gamma$. Otherwise, introduce two new elements $\calP_\gamma(1)$ and $\calP_\gamma(2)$. If the intersection point of $\gamma$ and $\tau_a$ lies strictly closer to the right endpoint of $\tau_a$ than the left (with respect to the orientation of $\gamma)$, then set $\calP_\gamma(1) \succ \calP_\gamma(2)$; otherwise set $\calP_\gamma(1) \prec \calP_\gamma(2)$. Give both elements the label $\tau_a$, weight the smaller with $k$ and weight larger with $\frac{1}{k}$.

The remainder of the poset is constructed by performing the following.
\begin{enumerate}
    \item Let $\tau_a$ be the last arc from $\mathcal{L}$ which we have accounted for. If $\tau_a$ is also the last arc crossed by $\gamma$, then we return the poset constructed. Otherwise, let $\tau_b$ be the next arc crossed by $\gamma$, and let $\calP_\gamma(j)$ be the last element of $\calP_\gamma$ constructed so far.
    \item Introduce an element $\calP_\gamma(j+1)$ which has label $\tau_b$. If $\tau_a$ and $\tau_b$ share an endpoint to the right of $\gamma$, set $\calP_\gamma(j) \succ \calP_\gamma(j+1)$; otherwise,  set $\calP_\gamma(j) \prec \calP_\gamma(j+1)$.
    \item If $k = 0$, then weight $\calP_\gamma(j+1)$ with 1 and return to step 1. 
    \item If $k > 0$, then introduce another element $\calP_\gamma(j+2)$ which is also labeled $\tau_b$. If the intersection point of $\gamma$ and $\tau_b$ lies strictly closer to the right endpoint of $\tau_b$ than the left (with respect to the orientation of $\gamma)$, then set $\calP_\gamma(j+1) \succ \calP_\gamma(j+2)$; otherwise set $\calP_\gamma(j+1) \prec \calP_\gamma(j+2)$. In either case, weight the smaller element with $k$ and weight larger with $\frac{1}{k}$. Return to Step (1).
\end{enumerate}  
\end{algorithm}

Notice that even if $\gamma$ and $\delta$ are homotopic arcs and have the same sequence of intersections with $\mathcal{L}$, the posets $\calP_\gamma$ and $\calP_\delta$ can be different.  This is illustrated in the following Example.

\begin{example}\label{Ex:DoesNotDistinguishHomotopy}
Consider the following two arcs,  $\gamma_1$ and $\gamma_2$.  

\begin{center}
\begin{tikzpicture}[scale=3]
\draw (0,0) to  (1,0) to  (2,0) to (2,1) to node[above, gray]{$z$}(1,1) to(0,1) to (0,0);
\draw (1,0) to node[right,yshift = -20pt, gray]{$\tau_2$} (1,1);
\draw (0,1) to node[right,gray]{$\tau_1$} (1,0);
\draw(1,1) to node[right,gray]{$\tau_3$} (2,0);
\draw[thick, blue] (0,0) to node[above, xshift = -10pt, yshift = -3pt]{$\gamma_1$} (2,1);
\draw[thick, red] (0,0) to [out = 45, in = 215] node[left]{$\gamma_2$} (0.5,0.8);
\draw[thick, red] (0.5,0.8) to [out = 35, in = 135] (1.5,0.2);
\draw[thick, red] (1.5,0.2) to [out = -45,  in = 210] (2,1);
\end{tikzpicture}
\end{center}

Assume first that $k>0$.  Then, the two posets, $\calP_{\gamma_1}$ and $\calP_{\gamma_2}$ are drawn on the left and right below, respectively. Each element is denoted with as (label, weight).

\begin{center}
\begin{tabular}{cc}
\begin{tikzpicture}
\node(1) at (0,0){$(\tau_1,\frac{1}{k})$};
\node(2) at (1,-1){$(\tau_1,k)$};
\node(3) at (2,-2){$(\tau_2,k)$};
\node(4) at (3,-1){$(\tau_2,\frac{1}{k})$};
\node(5) at (4,0){$(\tau_3,k)$};
\node(6) at (5,1){$(\tau_3,\frac{1}{k})$};
\draw(1) -- (2);
\draw(2) -- (3);
\draw(3) -- (4);
\draw(4) -- (5);
\draw(5) -- (6);
\end{tikzpicture}&
\begin{tikzpicture}
\node(1) at (0,0){$(\tau_1,k)$};
\node(2) at (1,1){$(\tau_1,\frac{1}{k})$};
\node(3) at (2,0){$(\tau_2,k)$};
\node(4) at (3,1){$(\tau_2,\frac{1}{k})$};
\node(5) at (4,2){$(\tau_3,\frac{1}{k})$};
\node(6) at (5,1){$(\tau_3,k)$};
\draw(1) -- (2);
\draw(2) -- (3);
\draw(3) -- (4);
\draw(4) -- (5);
\draw(5) -- (6);
\end{tikzpicture}
\end{tabular}
\end{center}

If $k = 0$, the resulting posets are in fact the same.  They are given by $(\tau_1,1) \succ (\tau_2,1) \prec (\tau_3,1)$. 
\end{example}

Let $\gamma_\fpq$ denote the a straight line between $(0,0)$ and $(q,p)$ where $p$ and $q$ are coprime positive integers. For instance, $\gamma_1$ in Example \ref{Ex:DoesNotDistinguishHomotopy} is $\gamma_{\frac12}$. Each such arc intersects one line segment at its midpoint. Note that our algorithm is \emph{left-biased} in that even when $\gamma$ crosses a line segment $\tau$ at its midpoint, we treat the intersection as if it is closer to the left endpoint.  A corollary from the work in \cite{BG}  is that the posets $\calP_{\fpq}$ can be used to compute the $k$-Markov numbers.

\begin{theorem}\label{thm:ComputeKMarkov}
If $0 < p < q$ satisfy $\gcd(p,q) = 1$ and $k \geq 0$, then \[
\mathcal{W}(\calP_{\gamma_\fpq}) = m^{(k)}_{\fpq}.
\]
\end{theorem}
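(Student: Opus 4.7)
The plan is to derive this formula as a specialization of the main combinatorial theorem of \cite{BG}. In that paper, for each suitable arc $\gamma$ on $\mathbb{Z}^2$, the poset $\calP_\gamma$ carries both a Laurent-monomial label and a scalar weight on each element, and the doubly-decorated sum $\sum_{I \in J(\calP_\gamma)} \prod_{\rho \in I} l(\rho) w(\rho)$ is shown to equal a cluster variable of the generalized cluster algebra modeling the once-punctured torus with the Markov-type coefficient data of $(\ref{eq:GM_General})$. Under the specialization sending every initial cluster variable appearing in the labels to $1$, this sum collapses to exactly $\mathcal{W}(\calP_\gamma)$, since the label of each element becomes $1$ while the (rational) weights are left intact. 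Combining this with the established link between generalized cluster variables of the Markov cluster algebra and $k$-Markov numbers (the cluster-algebraic interpretation implicit in \cite{GyodaMatsushita}), the cluster variable indexed by $\gamma_\fpq$ specializes precisely to $m^{(k)}_\fpq$, yielding the claim.

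The only genuinely new step, relative to directly citing \cite{BG}, is to verify that the identification of arcs with cluster variables used there agrees with the Farey-tree indexing $\overline{\iota}$ set up in Section \ref{sec:RationalLabeing}. This can be done by induction on depth in the Farey tree: the base cases $\fpq = \tfrac{1}{2}$ and the boundary values $m^{(k)}_{0/1} = 1$, $m^{(k)}_{1/1} = k+2$ follow by directly running Algorithm \ref{algo:Old} on $\gamma_{0/1}$ (empty poset, $\mathcal{W} = 1$), $\gamma_{1/1}$ (a two-element poset with weights $k$ and $1/k$, giving $\mathcal{W} = 1 + k + 1 = k+2$), and $\gamma_{1/2}$; while the inductive step follows from the compatibility between the arc-mutation/mediant picture on $\mathbb{Z}^2$ and the Vieta jump of Theorem \ref{thm:GM}.

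If one preferred a self-contained proof independent of the cluster-algebraic framework, the alternative is to show directly that $M_\fpq := \mathcal{W}(\calP_{\gamma_\fpq})$ satisfies the Vieta relation (\ref{eq:GenVieta}) along the Farey tree. The route I would take is to invoke the extension lemma to replace each weighted poset $\calP_{\gamma_\fpq}$ with an unweighted fence poset whose number of order ideals equals $M_\fpq$, and then apply Proposition \ref{prop:SkeinRelationsOnPosets} to the crossing overlap arising from the geometric intersection of $\gamma_\frs$ and $\gamma_{t/u}$ at a Farey-tree vertex $(\frs, \fpq, \frac{t}{u})$. The main obstacle in this second approach is that the right-hand side of (\ref{eq:GenVieta}) is a three-term expression $a^2 + kab + b^2$, while a single skein resolution yields only two summands; producing the mixed $kab$ term requires either two carefully chosen applications of the resolution or a single resolution whose ``multiplicity'' emerges naturally from the chain of $k+1$ elements that the extension lemma introduces in place of each weight-$k$ element. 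Tracking labels through the extension so that the resulting posets land on $\gamma_\fpq$ and its Farey sibling is the delicate bookkeeping step.
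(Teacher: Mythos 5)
Your first paragraph is exactly the paper's proof: Theorem \ref{thm:ComputeKMarkov} is obtained by specializing all initial variables to $1$ in \cite[Theorem 8.30]{BG}, and the paper states nothing more. The additional Farey-tree induction and the alternative skein-relation route you sketch are not needed (the indexing compatibility is already part of the cited result), so the proposal is correct and takes essentially the same approach.
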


\begin{proof}
This is a consequence of setting all variables $x_i$ and $y_i$ to 1 in \cite[Theorem 8.30]{BG}.
\end{proof}

The statement of Theorem \ref{thm:ComputeKMarkov} points out that the parameter $k$ is implicit in our notation of the poset $\calP_\fpq$. Since we will not vary $k$, this should not cause confusion.

We will be interested in a wider family of curves. Let $A,B \in \mathbb{Z}^2$. Define $\gamma_{AB}^L$ to be the result of taking the line segment $\overline{AB}$ and deforming each intersection of $\overline{AB}$ with  points in $\mathbb{Z}^2$ other than $A$ and $B$ an infinitesimal amount to the left.  
%Here, ``to the left'' is with respect to orienting the path from $A$ to $B$.  The first criteria means that $\gamma_{AB}^L$ will not intersect any line segments at exactly their midpoint. The second criteria implies that $A$ and $B$ are the only points in $\mathbb{Z}^2$ which $\gamma_{AB}^L$ intersects. 
Define $\gamma_{AB}^R$ similarly to $\gamma_{AB}^L$, using a right deformation instead. 

\begin{convention}\label{conv:gammaABLR}
The arcs $\gamma_{AB}^L$ and $\gamma_{AB}^R$ each have the form of a series of arcs $\gamma_\fpq$ for the same fraction $\fpq$ connected by semicircles. While the arcs $\gamma_\fpq$ each intersect a line segment in $\mathcal{L}$ at its midpoint, we imagine that in $\gamma_{AB}^L$, these intersections have moved slightly to the left and in $\gamma_{AB}^R$ these have moved slightly to the right. 
\end{convention}

In particular, $\calP_{\gamma_\fpq} = \calP_{\gamma^L_{(0,0)(q,p)}}$, and this is distinct from $\calP_{\gamma^R_{(0,0)(q,p)}}$ unless $k = 0$.

\begin{example}
Here, we consider $\gamma_{AB}^L$ and $\gamma_{AB}^R$ where $A = (0,0)$ and $B = (2,4)$.  These are drawn on the left and right below.
\begin{center}
\begin{tabular}{cc}
\begin{tikzpicture}[scale=2]
\draw (0,0) -- (4,0) -- (4,2) -- (0,2) -- (0,0);
\draw (1,0) -- (1,2);
\draw (2,0) -- (2,2);
\draw(3,0) -- (3,2);
\draw(0,1) -- (4,1);
\draw (0,1) -- (1,0);
\draw (0,2) -- (2,0);
\draw(1,2) -- (3,0);
\draw(2,2) -- (4,0);
\draw(3,2) -- (4,1);
\draw[thick, red] (0,0) -- (1.85,0.9);
\draw[thick, red] (1.85,0.9) to [out = 90, in = 180,looseness=1.5] (2.15,1.1);
\draw[thick,red] (2.15,1.1) -- (4,2);
\end{tikzpicture}&
\begin{tikzpicture}[scale=2]
\draw (0,0) -- (4,0) -- (4,2) -- (0,2) -- (0,0);
\draw (1,0) -- (1,2);
\draw (2,0) -- (2,2);
\draw(3,0) -- (3,2);
\draw(0,1) -- (4,1);
\draw (0,1) -- (1,0);
\draw (0,2) -- (2,0);
\draw(1,2) -- (3,0);
\draw(2,2) -- (4,0);
\draw(3,2) -- (4,1);
\draw[thick, red] (0,0) -- (1.85,0.9);
\draw[thick, red] (1.85,0.9) to [out = 0, in = -90,looseness=1.5] (2.15,1.1);
\draw[thick,red] (2.15,1.1) -- (4,2);
\end{tikzpicture}
\end{tabular}
\end{center}

The poset $\calP_{\gamma_{AB}^L}$ is drawn below.

\begin{center}
\begin{tikzpicture}[scale=0.85]
\node(1) at (0,0){$(\tau_1,\frac{1}{k})$};
\node(2) at (1,-1){$(\tau_1,k)$};
\node(3) at (2,-2){$(\tau_2,k)$};
\node(4) at (3,-1){$(\tau_2,\frac{1}{k})$};
\node(5) at (4,0){$(\tau_3,k)$};
\node(6) at (5,1){$(\tau_3,\frac{1}{k})$};
\draw(1) -- (2);
\draw(2) -- (3);
\draw(3) -- (4);
\draw(4) -- (5);
\draw(5) -- (6);
\node(7) at (6,2){$(\tau_4,\frac{1}{k})$};
\node(8) at (7,1){$(\tau_4,k)$};
\node(9) at (8,0){$(\tau_5,\frac{1}{k})$};
\node(10) at (9,-1){$(\tau_5,k)$};
\node(b) at (10,-2){$(\tau_6,\frac{1}{k})$};
\node(c) at (11,-3){$(\tau_6,k)$};
\draw(6) -- (7);
\draw(7) -- (8);
\draw(8) -- (9);
\draw(9) -- (10);
\draw(10) -- (b);
\draw(b) -- (c);
\node(11) at (12,-2){$(\tau_7,\frac{1}{k})$};
\node(12) at (13,-3){$(\tau_7,k)$};
\node(13) at (14,-4){$(\tau_8,k)$};
\node(14) at (15,-3){$(\tau_8,\frac{1}{k})$};
\node(15) at (16,-2){$(\tau_9,k)$};
\node(16) at (17,-1){$(\tau_9,\frac{1}{k})$};
\draw(c)--(11);
\draw(11) -- (12);
\draw(12) -- (13);
\draw(13) -- (14);
\draw(14) -- (15);
\draw(15) -- (16);
\end{tikzpicture}
\end{center}

Notice that $\calP_{\gamma_{AB}^L}$  is the result of taking two copies of $\calP_{\gamma_{\frac12}}$ ($\calP_{\gamma_1}$ in Example \ref{Ex:DoesNotDistinguishHomotopy}) and connecting each to a chain $(\tau,\frac{1}{k}) \succ \cdots  (\tau',k)$. The poset $\calP_{\gamma_{AB}^R}$ can be constructed the same way, but with the order of the chain reversed,  and the middle inequalities of the two copies of $\calP_{\gamma_{\frac12}}$ reversed (see Convention \ref{conv:gammaABLR}).  Indeed, these two posets are the same with reverse chronological labeling.  We will show this is always the case in Lemma \ref{lem:LeftAndRightEqual}.
\end{example}

Our goal is to show that, in this setting, we can replace the posets constructed in \cite{BG} with different posets, called \emph{extended posets}, in which every element has weight 1. Therefore, we can recognize all $k$-Markov numbers as enumerating order ideals and as numerators of continued fractions.

In order to work in slightly larger generality, say that a weighted poset with positive rational weights and chronological ordering is \emph{balanced} if all elements with a weight other than 1 come in pairs of the form $\{\calP(i),\calP(i+1)\}$ such that $\wt(\calP(i))\wt(\calP(i+1)) = 1$ and the smaller of the two elements has an integral weight. For any curve $\gamma$ on $\mathbb{Z}^2$, the poset $\calP_\gamma$ is balanced.

\begin{algorithm}\label{algo:New}
Let $\calP$ be a balanced poset. We construct the \emph{extended poset}, $\calP^\exx$ by performing the following. If every element of $\calP$ has weight 1, then $\calP^\exx = \calP$. Otherwise, let $i$ be the smallest number such that $\calP(i)$ has a weight other than 1. We know that either (1) $\wt(\calP(i)) = k$, $\wt(\calP(i+1)) = \frac1k$, and $\calP(i) \prec \calP(i+1)$, or (2) $\wt(\calP(i)) = \frac1k$, $\wt(\calP(i+1)) = k$, and $\calP(i) \succ \calP(i+1)$. 
    \begin{enumerate}
    \item If we are in case (1), introduce elements $\calP(i+j)$ for $2 \leq j \leq k$ such that $\calP(i+j) \prec \calP(i+j+1)$ for all $1 \leq j < k$. Shift the chronological ordering on all later elements. If $\calP(a)$ is such that $\calP(a) \prec \calP(i+1)$, we remove this relation and replace it with $\calP(a) \prec \calP(i+k)$. If $\calP(a)$ is such that $\calP(a) \succ \calP(i+1)$, we also set $\calP(a) \succ \calP(i+k)$. For all $0 \leq j \leq k$, we set the  weight of $\calP(i+j)$ to $1$. Now, return to the beginning.
    \item If we are in case (2), we perform a dual construction to the previous. 
    \end{enumerate}

\end{algorithm}

When we apply this Algorithm to $\calP_\gamma$, the elements in each ``balanced'' pair will be labeled in the same way, so we can unambiguously label the extended version with this common label. 
Figure \ref{fig:AlgoNew} succinctly illustrates Algorithm \ref{algo:New} along with this labeling convention. 

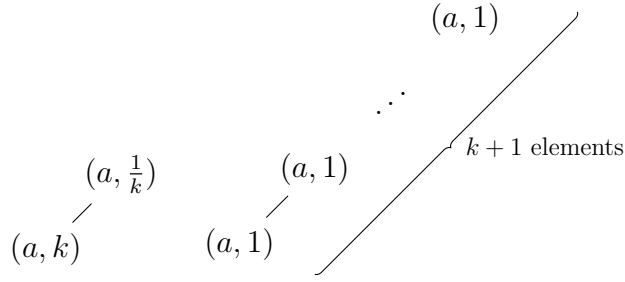
\begin{figure}
\centering
\begin{tabular}{cc}
\begin{tikzpicture}
\node(3) at (2,-2){$(\tau,k)$};
\node(4) at (3,-1){$(\tau,\frac{1}{k})$};
\draw(3) -- (4);
\end{tikzpicture}&
\begin{tikzpicture}
\node(1) at (0,0){$(\tau,1)$};
\node(2) at (1,1){$(\tau,1)$};
\node(3) at (2,2){$\iddots$};
\node(4) at (3,3){$(\tau,1)$};
\draw(1) -- (2);
\draw [decorate,
    decoration = {brace,mirror}] (1,-0.4) to node[right, scale = 0.8, xshift = 5pt, yshift = -2pt]{$k+1$ elements}  (4.5,3.5+-0.4);
\end{tikzpicture}
\end{tabular}
\caption{The main process in Algorithm \ref{algo:New}}\label{fig:AlgoNew}
\end{figure}

%Given a balanced poset $\calP$, the associated extended poset $\calP^\exx$ is also balanced.

\begin{example}\label{ex:ExtendedPoset}
Suppose $k = 2$.  Then,  the extended posets $\calP^\exx_{\gamma_1}$ and $\calP^\exx_{\gamma_2}$ associated to posets $\calP_{\gamma_1}$  and $\calP_{\gamma_2}$ built in Example \ref{Ex:DoesNotDistinguishHomotopy} are below, on the left and right respectively.  Since the weight of each element is one, we suppress weight and only write the label of each element.

\begin{center}
\begin{tabular}{cc}
\begin{tikzpicture}[scale=0.65]
\node(1) at (0,0){$\tau_1$};
\node(2) at (1,-1){$\tau_1$};
\node(3) at (2,-2){$\tau_1$};
\node(4) at (3,-3){$\tau_2$};
\node(5) at (4,-2){$\tau_2$};
\node(6) at (5,-1){$\tau_2$};
\node(7) at (6,0){$\tau_3$};
\node(8) at (7,1){$\tau_3$};
\node(9) at (8,2){$\tau_3$};
\draw(1) -- (2);
\draw(2) -- (3);
\draw(3) -- (4);
\draw(4) -- (5);
\draw(5) -- (6);
\draw(6) -- (7);
\draw(7) -- (8);
\draw(9) -- (8);
\end{tikzpicture}&
\begin{tikzpicture}[scale=0.75]
\node(1) at (0,0){$\tau_1$};
\node(2) at (1,1){$\tau_1$};
\node(3) at (2,2){$\tau_1$};
\node(4) at (3,1){$\tau_2$};
\node(5) at (4,2){$\tau_2$};
\node(6) at (5,3){$\tau_2$};
\node(7) at (6,4){$\tau_3$};
\node(8) at (7,3){$\tau_3$};
\node(9) at (8,2){$\tau_3$};
\draw(1) -- (2);
\draw(2) -- (3);
\draw(3) -- (4);
\draw(4) -- (5);
\draw(5) -- (6);
\draw(6) -- (7);
\draw(7) -- (8);
\draw(8)--(9);
\end{tikzpicture}
\end{tabular}
\end{center}

\end{example}

\begin{remark}
Following the correspondence between snake graphs and posets, we can realize Algorithm \ref{algo:New} as taking a snake graph with some internal edges weighted by positive integers and expanding this to a snake graph with unweighted edges. 

\begin{center}
\begin{tikzpicture}
 \draw(0,0) -- (2,0) -- (2,1) -- (0,1) -- (0,0);
 \draw (1,0) to node[right]{$3$} (1,1);
 \node[] at (-0.2,-0.2){$\iddots$};
 \node[] at (2.25,1.25){$\iddots$};
 \node[] at (3,0.5){$\to$};
 \draw(4,-0.5) -- (6,-0.5) -- (6,0.5) -- (7,0.5) -- (7,1.5)--(5,1.5) -- (5,0.5) -- (4,0.5) -- (4,-0.5);
 \draw(5,-0.5) -- (5,0.5) -- (6,0.5) -- (6,1.5);
  \node[] at (3.8,-0.8){$\iddots$};
 \node[] at (7.25,1.75){$\iddots$};
\end{tikzpicture}
\end{center}
\end{remark}

Next, we show an important compatibility between the posets $\calP_\gamma$ and $\calP_\gamma^\exx$.

\begin{lemma}\label{lem:ExtendPoset}
If $\calP$ be a balanced poset with associated extended poset $\calP^\exx$, then
\[
\mathcal{W}(\calP) = \mathcal{W}(\calP^\exx) = \vert J(\calP^\exx) \vert
\]
\end{lemma}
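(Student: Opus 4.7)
The statement has two equalities. The second, $\mathcal{W}(\calP^\exx) = \vert J(\calP^\exx) \vert$, is immediate from the definition of $\mathcal{W}$, since $\calP^\exx$ has all weights equal to $1$ by construction, so each order-ideal summand contributes exactly $1$. The substance lies in proving $\mathcal{W}(\calP) = \mathcal{W}(\calP^\exx)$.

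My plan is to induct on the number of weighted pairs of $\calP$, that is, the number of adjacent pairs $\{\calP(i),\calP(i+1)\}$ with weights $(k,\frac{1}{k})$ or $(\frac{1}{k},k)$. The base case of zero such pairs is trivial, since then $\calP = \calP^\exx$. For the inductive step, let $\calP'$ be the poset produced by a single iteration of Algorithm \ref{algo:New} applied to the first weighted pair of $\calP$. Then $\calP'$ is still balanced with one fewer weighted pair, and iterating the algorithm on $\calP'$ yields the same result as iterating on $\calP$, so $(\calP')^\exx = \calP^\exx$. By the inductive hypothesis $\mathcal{W}(\calP') = \vert J(\calP^\exx) \vert$, so it suffices to establish the local identity $\mathcal{W}(\calP) = \mathcal{W}(\calP')$.

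To prove this local identity, assume without loss of generality we are in case (1), so the pair is $\calP(i) \prec \calP(i+1)$ with weights $(k,\frac{1}{k})$, replaced in $\calP'$ by the unweighted chain $\calP(i) \prec \calP(i+1) \prec \cdots \prec \calP(i+k)$. The key observation is that the external relations are the same in $\calP$ and $\calP'$: the left neighbor of $\calP(i)$ is unchanged, and the right neighbor of what is $\calP(i+1)$ in $\calP$ is attached to $\calP(i+k)$ in $\calP'$ by the same cover relation, by the very definition of the algorithm. I then partition the order ideals of each poset by their restriction to the external part, and for each fixed external restriction compare the allowed ``local states.'' In $\calP$ these are $\emptyset$, $\{\calP(i)\}$, and $\{\calP(i),\calP(i+1)\}$, contributing total weight $1 + k + 1 = k+2$; in $\calP'$ these are the $k+2$ order-ideal prefixes of the chain, each of weight $1$.

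The remaining work is a case analysis on whether the external order ideal forces or precludes inclusion of the chain's bottom element $\calP(i)$, and, independently, whether it forces or precludes inclusion of the top ($\calP(i+1)$ in $\calP$ or $\calP(i+k)$ in $\calP'$). In each case, exactly the same subset of ``heights'' is permitted on both sides: if the left forces inclusion of $\calP(i)$, the empty state drops out, leaving weight $k+1$ on each side; if the right forces inclusion of the top, only the full state survives, giving $1$ on each side; and so on. Case (2) is handled dually, exchanging roles of top and bottom. Summing over external configurations then gives $\mathcal{W}(\calP) = \mathcal{W}(\calP')$. The main obstacle is the routine but somewhat finicky bookkeeping of this case analysis; once one observes that the chain endpoints in $\calP'$ play the same roles with respect to the external neighbors as the pair $\calP(i),\calP(i+1)$ did in $\calP$, each case reduces to a one-line arithmetic check.
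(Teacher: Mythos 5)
Your proposal is correct and follows essentially the same route as the paper: both reduce to showing a single iteration of Algorithm \ref{algo:New} preserves $\mathcal{W}$, and both rest on the same local weight count, namely that the three local states of the weighted pair contribute $1+k+\tfrac{k}{k}=k+2$ while the replacing $(k+1)$-chain has $k+2$ order ideals, with the external attachments unchanged. The paper packages this as explicit bijections $\Phi_0,\ldots,\Phi_{k+1}$ between classes of ideals grouped by local state (with the class $\calA_1$ mapping $k$-to-one onto $\bigcup_j\calB_j$), whereas you fiber over the external configuration; these are the same argument organized differently.
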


\begin{proof}
Notice the second equality is trivial since every order ideal in $\calP^\exx$ has weight 1. It suffices to show that if $\calP'$ is the result of applying one iteration of step 2 in Algorithm \ref{algo:New} to the balanced poset $\calP$, then $\mathcal{W}(\calP) = \mathcal{W}(\calP')$.

Let $i$ be the index such that we perform step 2 from Algorithm \ref{algo:New} at $\calP(i)$ and $\calP(i+1)$. Up to possibly reversing chronological ordering, we can assume $\wt(\calP(i)) = k$, so that by the balanced condition $\wt(\calP(i+1)) = \frac1k$ and $\calP(i) \prec \calP(i+1)$.

Define the following subsets of $J(\calP)$ and $J(\calP')$.

\begin{itemize}
    \item Let $\calA_0$ (resp. $\calB_0$) denote the set of order ideals $I \in J(\calP)$ ($I \in J(\calP')$) such that $\calP(i) \notin I$ ($\calP'(i) \notin I$). 
    \item Let $\calA_1$ denote the set of order ideals $I \in J(\calP)$ such that $\calP(i) \in I$ and $\calP(i+1) \notin I$.
    \item For each $1 \leq j \leq k$, let $\calB_j$ denote the set of order ideals $I \in J(\calP')$ such that $\calP'(i+j-1) \in I$ and $\calP'(i+j) \notin I$.
    \item Let $\calA_2$ (resp. $\calB_{k+1}$) denote the set of order ideals $I \in J(\calP)$ ($I \in J(\calP')$) such that $\calP(i+1) \in I$ ($\calP'(i+k) \in I$).
\end{itemize}

The sets $\calA_0,\calA_1,$ and $\calA_2$ partition $J(\calP)$ and the sets $\calB_0,\calB_1,\ldots,\calB_{k+1}$ partition $J(\calP')$. Let $\sigma: \calP \to \calP'$ be defined by \[
\sigma(\calP(j)) = \begin{cases} \calP'(j) & j \leq i \\ 
\calP'(j + k - 1) & j > i \end{cases}
\]

Extend $\sigma$ to subsets $S$ of $\calP$ by defining $\sigma(S):=\{\sigma(\rho): \rho \in S\}$.
Notice that if $j \notin \{i,i+1\}$, then $\wt(\calP(j)) = \wt(\sigma(\calP(j)))$.

We define a family of bijections between the parts of our partition of $J(\calP)$ and $J(\calP')$.
\begin{itemize}
    \item Let $\Phi_0: \calA_0 \to \calB_0$ be defined by $\Phi_0(I)=\sigma(I) := \{\sigma(\rho) : \rho \in I\}$. 
    \item For each $1 \leq j \leq k$, define $\Phi_j: \calA_1 \to \calB_j$ by $\Phi_j(I) = \sigma(I) \cup \langle \calP'(i+j-1)\rangle$. 
    \item Let $\Phi_{k+1}: \calA_2 \to \calB_{k+1}$ by defined by $\Phi_k(I) = \sigma(I) \cup \langle \calP'(i+k)\rangle$.
\end{itemize}

The map $\Phi_0$ is clearly weight-preserving. Since $\wt(\langle\calP(i+1)\rangle) = \wt(\langle\calP'(i+k)\rangle)$, $\Phi_{k+1}$ is also weight-preserving.

Fix $I \in \calA_1$ and let $W = \wt(I) /k = \wt(I \backslash \{\calP(i)\})$. For all $1 \leq j \leq k$, we have $\wt(\Phi_j(I)) = W$, so that $\sum_{j=1}^k \wt(\Phi_j(I)) = \wt(I)$.

Now, we conclude \begin{align*}
\mathcal{W}(\calP) &= \sum_{I \in J(\calP)} \wt(I) = \sum_{I \in \calA_0} \wt(I) +\sum_{I \in \calA_1} \wt(I) +\sum_{I \in \calA_2} \wt(I) \\
& = \sum_{I \in \calA_0} \wt(\Phi_0(I)) +\sum_{I \in \calA_1} \sum_{j=1}^k \wt(\Phi_j(I)) +\sum_{I \in \calA_2} \wt(\Phi_{k+1}(I))\\
&= \sum_{I \in \calB_0} \wt(I) + \sum_{j=1}^k \sum_{I \in \calB_j} \wt(I) + \sum_{I \in \calB_{k+1}} \wt(I) = \mathcal{W}(\calP').
\end{align*}
\end{proof}

\begin{remark}
One could alternatively prove Theorem \ref{thm:ComputeKMarkov} by combining Lemma \ref{lem:ExtendPoset} with the work in \cite{gyoda2024sl}, whose snake graph construction mirrors the extended posets considered here.
\end{remark}

A consequence of Lemma \ref{lem:ExtendPoset} is that the posets $\calP_\gamma^\exx$ have a nearly-palindromic symmetry. 

\begin{lemma}\label{lem:NearPalindrome}
Let $p,q$ be coprime integers. If the shape of $\calP^\exx_{\gamma_\fpq}$ is given by $a_1,\ldots,a_n$, then \begin{enumerate}
\item $n$ is even,
\item $a_i = a_{n+1-i}$ for all $1 \leq i \leq \frac{n}{2}-1$, and
\item $a_{\frac{n}{2}} = a_{\frac{n}{2}+1} \pm k$. 
\end{enumerate}
\end{lemma}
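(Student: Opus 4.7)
The plan is to exploit the $180^\circ$ rotational symmetry of $\gamma_\fpq$ about its midpoint $M=(q/2,p/2)$. Composed with the translation by $-(q,p)$, this rotation preserves both $\mathbb{Z}^2$ and the set $\mathcal{L}$ of lattice segments, and it reverses the orientation of $\gamma_\fpq$. Exactly one segment $\tau^*\in \mathcal{L}$ is crossed by $\gamma_\fpq$ at its own midpoint (which is also the midpoint of $\tau^*$); every other crossing comes in a pair swapped by the rotation.

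I would first verify that, away from the central crossing, all of the local data driving Algorithm \ref{algo:Old} --- namely which endpoint of each crossed segment lies to the right of $\gamma_\fpq$, which endpoint is shared with the previously crossed segment, and whether the crossing lies strictly closer to one endpoint of the segment than the other --- is invariant under the rotation combined with orientation reversal. Consequently, each pair of symmetric crossings contributes a pair of matching elements (or, for $k>0$, a pair of matching weighted pairs) to $\calP_{\gamma_\fpq}$, and these contributions sit at chronologically mirrored positions. At the central crossing, however, the left-bias convention selects one endpoint of $\tau^*$ as ``closer'' and thereby fixes the orientation of the single contribution introduced there; running the construction with the opposite bias would flip precisely this central contribution and nothing else.

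I would then pass to $\calP^\exx_{\gamma_\fpq}$ via Algorithm \ref{algo:New}. Each non-central weighted pair in $\calP_{\gamma_\fpq}$ becomes a monotone chain of $k+1$ elements, and the rotational pairing of crossings yields two identical chains symmetric about the center of the extended poset; these matched chains contribute equal entries to the shape at positions equidistant from the middle, which is part (2). The central weighted pair becomes a single monotone chain of $k+1$ elements whose direction is dictated by the left-bias; this chain sits adjacent to the central local extremum of $\calP^\exx_{\gamma_\fpq}$, with $k$ of its $k+1$ elements lengthening the monotone run on exactly one side of that extremum. Comparing with the right-biased construction (whose shape, by the rotational symmetry, is the reverse of ours) shows that these $k$ elements would instead lie on the opposite side; therefore the two central shape entries differ by exactly $k$, which is part (3). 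Part (1) follows because the symmetry pairs off all of the non-central shape entries while the central chain determines precisely two entries of the shape (the monotone runs on either side of the central extremum), forcing $n$ to be even.

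The main obstacle I anticipate is the bookkeeping around $\tau^*$: one must identify which of the two shape entries flanking the central extremum absorbs the extra $k$ elements from the midpoint chain, and confirm that the resulting imbalance is recorded as $\pm k$ in the shape convention of Section \ref{sec:Tools} rather than being distributed elsewhere. A short case analysis on the three possible slopes of $\tau^*$ (and, in each case, on which side of $\gamma_\fpq$ the left-bias prefers), combined with Lemma \ref{lem:ExtendPoset} to check that the weight-collapsing step preserves the structure claimed here, should handle all cases uniformly.
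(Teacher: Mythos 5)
Your approach is sound but genuinely different from the paper's: the paper disposes of this lemma in one line by citing the near-palindromicity of the weighted posets from \cite[Lemma 8.21]{BG} and transferring it to the extended posets via Lemma \ref{lem:ExtendPoset}, whereas you reprove the symmetry from scratch using the $180^\circ$ rotation of $\gamma_\fpq$ about its midpoint. Your argument is the one that must underlie the cited result anyway (it is the standard proof that the crossing sequence of a primitive lattice segment is a palindrome except at the central crossing, where the left-bias convention breaks the tie), so what you gain is self-containedness; what the paper gains is brevity and the ability to reuse Lemma \ref{lem:ExtendPoset} as the only new ingredient. Two points of care. First, the rotation about $M$ already preserves $\mathbb{Z}^2$ and $\mathcal{L}$ on its own (it is $v \mapsto (q,p)-v$); composing with the translation by $-(q,p)$ is unnecessary and in fact destroys the invariance of $\gamma_\fpq$. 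Second, and more substantively, the rotation is orientation-preserving on the plane but reverses the orientation of $\gamma_\fpq$, so ``right of $\gamma$'' at the $i$-th crossing corresponds to ``left of $\gamma$'' at the $(N+1-i)$-th crossing; the cover relations are therefore \emph{anti}-symmetric, $r_i = \overline{r}_{N-i}$, which is exactly what makes the poset isomorphic to its chronological \emph{reverse} rather than literally invariant. Your phrasing (``matching elements at chronologically mirrored positions'') elides this flip, and it matters for the bookkeeping you flag at the end: with one of the two chronological labelings the shape reads, e.g., $(6,4)$ for $\fpq=\tfrac12$, $k=2$ (so $n$ even and $a_1 = a_2 + k$ as claimed), while with the other it reads $(1,3,6)$ under the paper's shape convention. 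So when you carry out the case analysis around $\tau^*$, you must also fix which of the two labelings the lemma refers to; this is precisely the point at which the $\pm k$ and the parity of $n$ are decided.
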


\begin{proof}
Each claim is a consequence of combining \cite[Lemma 8.21]{BG} with Lemma \ref{lem:ExtendPoset}.
\end{proof}

\section{Markov Distance}\label{sec:MarkovDistance}

We will use the construction of the posets $\calP_\gamma$ and $\calP^\exx_\gamma$ to define a distance function on $\mathbb{Z}^2$.

\begin{definition}\label{def:kMarkovLength}
\begin{enumerate}
    \item  Given an arc $\gamma$ on $\mathbb{Z}^2$, define the \emph{$k$-Markov length} of $\gamma$ by $\ell_k(\gamma) = \mathcal{W}(\calP_\gamma)$.
    \item Given $A,B \in \mathbb{Z}^2$, if $A = B$,  $\vert AB \vert_k = 0$. Otherwise, define the \emph{$k$-Markov distance} between two integral points $A,B$ to be $\vert AB\vert_k =  \min_{\delta} \ell_k(\delta)$ where we range over all (possibly generalized) arcs $\delta$ with endpoints in $A$ and $B$.
\end{enumerate}
\end{definition}

\begin{remark}
As discussed in \cite[Remark 3.7]{LLRS}, $0$-Markov distance is not a metric, and for similar it is not a metric for larger $k$.
\end{remark}

Out of convenience, we will shift all line segments $\overline{AB}$ to the origin. This will not affect our calculations.

\begin{lemma}\label{lem:Translate}
If $k$ is a nonnegative integer, $A = (q,p), B = (s,r), A' = (0,0),$ and $B' = (s-q, r-p)$, then $\vert AB \vert_k = \vert A'B'\vert_k$.
\end{lemma}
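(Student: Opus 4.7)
The plan is to exhibit a bijection between arcs from $A$ to $B$ and arcs from $A'$ to $B'$ that preserves the $k$-Markov length $\ell_k$, from which the equality of distances will follow by taking the minimum on both sides.

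First I would introduce the translation map $T: \mathbb{R}^2 \to \mathbb{R}^2$ given by $T(x,y) = (x-q, y-p)$. Since $(q,p) \in \mathbb{Z}^2$, this map restricts to a bijection $\mathbb{Z}^2 \to \mathbb{Z}^2$ and sends $A$ to $A'$ and $B$ to $B'$. The crucial geometric observation is that the lattice $\mathcal{L}$ of line segments of slopes $0, \infty, -1$ with integer endpoints is invariant under $T$: translation by an integer vector sends such segments to segments of the same three slopes with integer endpoints. Moreover, $T$ preserves slope, so for any $\tau \in \mathcal{L}$ one has $l(T(\tau)) = l(\tau)$.

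Next I would check that the poset construction in Algorithm \ref{algo:Old} is translation-invariant. Given an arc $\delta$ between $A$ and $B$, set $\delta' = T \circ \delta$, oriented consistently with $\delta$. Then $\delta'$ is an arc between $A'$ and $B'$, and the correspondence $\delta \mapsto \delta'$ is a bijection (with inverse induced by $T^{-1}$). The data used in Algorithm \ref{algo:Old} to build $\calP_\delta$ consists of: (i) the ordered sequence of segments of $\mathcal{L}$ crossed by $\delta$; (ii) the label $l(\tau)$ of each such $\tau$; (iii) for each crossing, whether the intersection point lies strictly closer to the right or left endpoint of $\tau$ relative to the orientation of $\delta$; and (iv) for consecutive crossings $\tau_a,\tau_b$, whether they share an endpoint to the right of $\delta$. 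Every one of these pieces of data is preserved by the rigid translation $T$: the sequence of crossings is preserved because $T$ is a bijection on $\mathcal{L}$, the labels are preserved by (iii) above, and both the left/right and same-endpoint conditions are invariant under an orientation-preserving isometry whose image of $\mathcal{L}$ is $\mathcal{L}$.

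Consequently the weighted labeled posets $\calP_\delta$ and $\calP_{\delta'}$ are identical, so $\ell_k(\delta) = \mathcal{W}(\calP_\delta) = \mathcal{W}(\calP_{\delta'}) = \ell_k(\delta')$. Since the correspondence $\delta \leftrightarrow \delta'$ is a bijection between arcs with endpoints $\{A,B\}$ and arcs with endpoints $\{A',B'\}$, taking the minimum over each side yields $\vert AB \vert_k = \vert A'B'\vert_k$, which handles the case $A \ne B$; the case $A = B$ is immediate since $A=B$ iff $A'=B'$. The only real subtlety is the bookkeeping check that each rule in Algorithm \ref{algo:Old} is phrased in terms of translation-invariant data, and this is essentially immediate from the definition of $\mathcal{L}$.
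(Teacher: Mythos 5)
Your proposal is correct and follows essentially the same route as the paper: both arguments observe that translation by the integer vector $(-q,-p)$ preserves the lattice $\mathcal{L}$ and all the data used in Algorithm \ref{algo:Old}, so the weighted posets (hence the lengths $\ell_k$) are unchanged, and the bijection on arcs gives equality of the minima. Your write-up simply makes explicit the bookkeeping that the paper summarizes as ``this translation does not affect the crossing behavior.''
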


\begin{proof}
Translating  an arc $\gamma$ with endpoints at $A$ and $B$ to  $\gamma'$ with endpoints at $A'$ and $B'$ does not affect the crossing behavior, hence $\calP_{\gamma}$ and $\calP_{\gamma'}$ are the same as weighted posets. In particular, any arc $\delta$ with endpoints in $A$ and $B$ minimizing $\ell_k$ can be translated, and conversely for $\delta'$ with endpoints in $A'$ and $B'$. 
\end{proof}

 Lemma \ref{lem:Translate} allows us to safely always set $A = (0,0)$ throughout this section. Next, relating the posets $\calP_\gamma$ and $\calP^\exx_\gamma$ allows us to express $k$-Markov length in terms of continued fractions. 

\begin{cor}\label{cor:CountLength}
Let $\gamma$ be an arc on $\mathbb{Z}^2$. We have
\[
\ell_k(\gamma)  = \mathcal{N}[a_1,\ldots,a_n],
\]
where $a_1,\ldots,a_n$ is the shape of $\calP^\exx_\gamma$.
\end{cor}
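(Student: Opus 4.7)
The plan is to simply chain together three results already established in the paper, producing essentially a one-line proof. By the definition of $k$-Markov length (Definition \ref{def:kMarkovLength}), $\ell_k(\gamma) = \mathcal{W}(\calP_\gamma)$. The poset $\calP_\gamma$ produced by Algorithm \ref{algo:Old} is balanced, as observed in the paragraph just before Algorithm \ref{algo:New}, since every non-unit weight appears paired as $(k,\tfrac{1}{k})$ on consecutive chronologically-labeled elements with the smaller carrying the integer weight $k$. Hence Lemma \ref{lem:ExtendPoset} applies, giving the identifications
\[
\mathcal{W}(\calP_\gamma) \;=\; \mathcal{W}(\calP_\gamma^\exx) \;=\; |J(\calP_\gamma^\exx)|.
\]

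To conclude using Corollary \ref{cor:CountOrderIdeals}, I need $\calP_\gamma^\exx$ to be a fence poset, so that the language of ``shape'' and the corollary's formula apply. This is the only non-trivial verification: I would note that one iteration of step 2 of Algorithm \ref{algo:New} replaces a single cover relation $\calP(i) \prec \calP(i+1)$ (or the dual) by an inserted chain of $k-1$ new covers, while carefully rerouting any cover relations formerly incident to $\calP(i+1)$ to instead be incident to $\calP(i+k)$. Since $\calP_\gamma$ has a Hasse diagram which is a path (a fence), this local replacement simply lengthens the path by $k-1$ steps; hence $\calP_\gamma^\exx$ is again a fence poset. This is the one point I would dwell on, as it is the only place the argument has any content beyond citing previous results.

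Once $\calP_\gamma^\exx$ is known to be a fence poset of some shape $a_1,\ldots,a_n$, Corollary \ref{cor:CountOrderIdeals} immediately yields $|J(\calP_\gamma^\exx)| = \mathcal{N}[a_1,\ldots,a_n]$, completing the chain
\[
\ell_k(\gamma) \;=\; \mathcal{W}(\calP_\gamma) \;=\; \mathcal{W}(\calP_\gamma^\exx) \;=\; |J(\calP_\gamma^\exx)| \;=\; \mathcal{N}[a_1,\ldots,a_n].
\]
I would expect no genuine obstacles; the result is best viewed as a packaging corollary that consolidates Lemma \ref{lem:ExtendPoset} and Corollary \ref{cor:CountOrderIdeals} into a form that will be convenient for the Aigner-conjecture arguments in Section \ref{sec:InducedOrdering}.
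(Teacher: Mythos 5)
Your proposal is correct and follows essentially the same route as the paper, whose proof is simply ``combine Lemma \ref{lem:ExtendPoset} and Corollary \ref{cor:CountOrderIdeals}.'' Your extra verification that $\calP_\gamma^\exx$ remains a fence poset (so that ``shape'' and Corollary \ref{cor:CountOrderIdeals} apply) is a point the paper leaves implicit, and it is a reasonable thing to spell out, but it does not change the argument.
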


\begin{proof}
This follows from combining Lemma \ref{lem:ExtendPoset}  and Corollary \ref{cor:CountOrderIdeals}.
\end{proof}

For instance, given $\gamma_1$ and $\gamma_2$ as in Example \ref{Ex:DoesNotDistinguishHomotopy}, by computing the shape of each poset and using Corollary \ref{cor:CountOrderIdeals}, we have $\ell_k(\gamma_1) = 25$ and $\ell_k(\gamma_2) = 49$.  This example points out that $k$-Markov length is not constant across the homotopy class of an arc, even when restricting to homotopic arcs with the same crossing sequence. This is a unique feature of the $k > 0$ setting.

Our goal is to study which arcs $\gamma$ with endpoints $A,B$ satisfy $\ell_k(\gamma) = \vert AB \vert_k$. When searching for an arc with a fixed pair of endpoints which minimizes $\ell_k$, we can restrict ourself to arcs which, within their homotopy class, minimize the number of intersections with line segments in $\mathcal{L}$ and the number of self-intersections. We begin by showing that such a minimizing arc cannot have any self-intersection. 

\begin{lemma}\label{lem:SelfIntNotMinimal}
If $\gamma$ is an arc with endpoints $A,B$ which has a non-contractible self-intersection, then $\ell_k(\gamma) > \vert AB \vert_k$.
\end{lemma}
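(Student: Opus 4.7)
The plan is to use a skein-type relation on $\calP^\exx_\gamma$ derived from Proposition \ref{prop:SkeinRelationsOnPosets} to decompose $\ell_k(\gamma)$ as a sum of strictly positive terms, each of which contains a factor corresponding to an arc from $A$ to $B$. Since each such factor is bounded below by $\vert AB \vert_k$, and at least one of the two summands contributes a strictly positive extra factor (coming from the non-contractibility of the self-intersection), the strict inequality follows.

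First, I would identify a non-contractible self-intersection of $\gamma$ at an interior point $P$ and split $\gamma$ at $P$ into two sub-arcs whose extended posets $\calP^\exx$ share a crossing overlap in the sense of Section \ref{sec:Skein}: the lattice crossings of $\gamma$ on the two sides of $P$ mirror one another in a small neighborhood of $P$, which is exactly the structural feature that enables Proposition \ref{prop:SkeinRelationsOnPosets} to apply. The appropriate case (Type 0, 1, or 2) is determined by whether the self-intersection sits in the interior of $\gamma$ or interacts with its endpoints.

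Applying the relevant resolution and interpreting each term geometrically via Corollary \ref{cor:CountLength}, I expect an identity of the form
\[
\ell_k(\gamma) = \ell_k(\gamma_1) + \ell_k(\gamma_2)\cdot \ell_k(\omega),
\]
where $\gamma_1$ and $\gamma_2$ are arcs from $A$ to $B$ and $\omega$ is a closed loop. Geometrically, the two summands correspond to the two smoothings of the self-intersection: one yields the single arc $\gamma_1$, while the other yields an arc $\gamma_2$ disjoint from a closed loop $\omega$ bounding the ``self-looped'' portion of $\gamma$. Since $\gamma_1$ and $\gamma_2$ are arcs with endpoints $A, B$, we have $\ell_k(\gamma_i) \geq \vert AB \vert_k$ by definition of $k$-Markov distance. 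Because the self-intersection is non-contractible, the loop $\omega$ encloses at least one lattice point and thus crosses $\mathcal{L}$ non-trivially, so $\calP^\exx_\omega$ is non-empty and $\ell_k(\omega) \geq 1$. Combining these bounds yields $\ell_k(\gamma) \geq \vert AB \vert_k + \vert AB \vert_k > \vert AB \vert_k$.

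The main obstacle will be correctly identifying the self-intersection of $\gamma$ with a crossing overlap of the extended posets and verifying that the four posets produced by the resolution genuinely match the two geometric smoothings. The bookkeeping is subtle since $\calP_\gamma$ already encodes more than the homotopy class of $\gamma$ (see Example \ref{Ex:DoesNotDistinguishHomotopy}), and one must track how the extension procedure of Algorithm \ref{algo:New} interacts with the local poset structure near the crossings that flank $P$. This parallels the analogous step in \cite[Lemma 3.8]{LLRS} for ordinary Markov numbers, but requires extra care to ensure that the weighted-to-unweighted conversion is compatible with the skein decomposition.
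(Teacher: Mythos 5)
Your overall strategy --- resolve the self-intersection via a skein relation and bound each resulting term --- is the ``natural'' route, but it is precisely the route this paper deliberately avoids, and the reason it avoids it is the gap in your argument. The identity $\ell_k(\gamma) = \ell_k(\gamma_1) + \ell_k(\gamma_2)\cdot\ell_k(\omega)$ on which your proof hinges is not available from the tools at hand. Proposition \ref{prop:SkeinRelationsOnPosets} is stated for a \emph{pair} of fence posets with a crossing overlap; a self-intersection would require a self-overlap version of the skein relation, and one would have to verify that every non-contractible self-intersection of an arc produces a configuration of $\calP^\exx_\gamma$ with itself of the correct type (the extended posets here differ from the ones in the cluster-algebra literature, so this is not automatic from \cite{CS15} or \cite{banaian2024skein}). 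Worse, the second smoothing of a self-intersection produces a \emph{closed} curve $\omega$, and neither $\calP_\omega$ nor $\ell_k(\omega)$ is defined anywhere: Algorithm \ref{algo:Old} and Definition \ref{def:kMarkovLength} apply only to arcs with integral endpoints, so your claim that ``$\calP^\exx_\omega$ is non-empty, hence $\ell_k(\omega)\ge 1$'' has no referent. Your splitting of $\gamma$ at the self-intersection point $P$ into two sub-arcs is also problematic, since $P$ is not a lattice point and those sub-arcs are not arcs in the paper's sense.

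The paper sidesteps all of this with a more elementary argument: it cuts $\gamma$ at the self-intersection point into three pieces, forms $\gamma'$ by concatenating the first and third pieces (discarding the loop entirely rather than carrying it as a multicurve component), and exhibits an explicit injection $J(\calP^\exx_{\gamma'}) \hookrightarrow J(\calP^\exx_{\gamma})$ that misses at least one order ideal; the minimality of the crossing sequence (equivalently, non-contractibility) is what guarantees a missed ideal. This yields $\ell_k(\gamma) > \ell_k(\gamma') \ge \vert AB \vert_k$ with no skein relation and no closed curves. To salvage your approach you would first need to prove a self-overlap skein relation for these extended posets and define $\ell_k$ on closed curves --- both plausible but each a nontrivial addition, as the remark following the lemma in the paper points out.
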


\begin{proof}
The claim is trivial if $A = B$, since $\vert AA \vert_k = 0$, so we assume $A$ and $B$ are distinct. Let $\gamma$ be an arc with a point of self-intersection. Call this point $C$. Since $\gamma$ visits $C$ twice, we can divide $\gamma$ into three curves $\gamma_1,\gamma_2,\gamma_3$ consisting of the portion of $\gamma$ before the first time it passes through $C$, the portion between its two instances of passing through $C$, and the portion after the last instance.  In particular, giving each $\gamma_i$ the orientation inherited from $\gamma$, $\gamma_1$ starts at $A$ and $\gamma_3$ ends at $B$.  Let $\gamma'$ be the result of concatenating $\gamma_1$ and $\gamma_3$ and removing unnecessary intersections of the resulting arc with $\mathcal{L}$. Do this in such a way to preserve the location of the intersection points of all unaffected crossings between the $\gamma_1$ and $\mathcal{L}$ and similarly for $\gamma_3$. 

Removing the unnecessary intersections is akin to pulling $C$, the point of self-intersection, back to an extreme triangle. Assume first that $C$ cannot be pulled back to the first or last triangle which $\gamma$ passes through. Then, the configuration of $\gamma$ and $\gamma'$ is as below. 

\begin{center}
\begin{tikzpicture}[scale = 3]
\draw(0,0) to  (1,0) to (0,1) to (0,0); 
\draw[red, thick,<-] (-0.2,0.3) to [out = 0, in = 90] (0.2,-0.2);
\draw[blue, thick,] (-0.2,0.35) to [out = 0, in = 180] (1,0.35) to [out = 0, in = -90] (1.8,0.8) to [out = 90, in = 0] (1.4,1.2);
\draw[blue,thick,<-] (1.4,1.2)to [out = 180, in = 60] (1,0.3);
\draw[blue,thick] (1,0.3) to [out = 240, in = 90] (0.22,-0.2);
\node[right, blue] at (1.8,0.8){$\gamma$};
\node[red, right] at (0.13,0.13){$\gamma'$};
\node[above, xshift = -5pt] at (1,0.35){$C$};
\end{tikzpicture}   
\end{center}

We will show $\ell_k(\gamma) > \ell_k(\gamma')$ by exhibiting an injection from $J(\calP_{\gamma'}^\exx)$ to $J(\calP_\gamma^\exx)$ which has a nonempty cokernel. Let $h = \vert \calP_\gamma^\exx \vert$. By our construction of $\gamma'$, there are values $u < v$ such that $\calP_{\gamma'}^\exx$ is the result of taking the induced subposet on $\calP_\gamma[1,u] \cup \calP_\gamma[v,h]$ and adding a relation between $\calP_\gamma(u)$ and $\calP_\gamma(v)$. If $\gamma$ has the orientation as above, inducing an orientation on $\gamma'$ as well, then the posets $\calP_\gamma$ and $\calP_{\gamma'}$ are as below. By abuse of notation, we will retain the chronological labeling from $\calP_\gamma^\exx$ when referencing elements of $\calP_{\gamma'}^\exx$. There are two cases to consider; first, assume that $\calP_\gamma(u+1)$ and $\calP_\gamma(v-1)$ are incomparable. 
%If $\gamma'$ and $\gamma$ are as above, then up to orientation, $\calP_\gamma(u)$ is labeled by $z$ and $\calP_\gamma(v)$ is labeled by $x$. With this choice of orientation, we add to $\calP_{\gamma'}$ the relation $\calP_\gamma(u) \succ \calP_\gamma(v)$. Moreover, $\calP^\exx_\gamma(u+1)$ and $\calP^\exx_\gamma(v-1)$ are both labeled by $y$. 

\begin{center}
\begin{tikzpicture}
\node[] at (0.5,0) {$\boxed{1}$};
\node[] at (1.5,0){$\cdots$};
\node(s) at (2.5,0){$\boxed{u}$};
\node(s+1) at (3,1){$\boxed{u+1}$};
\node[] at (4.25,1){$\cdots$};
\node(t-1) at (5.5,1){$\boxed{v-1}$};
\node(t) at (6,2){$\boxed{v}$};
\node[] at (7,2){$\cdots$};
\node[] at (8,2) {$\boxed{h}$};
\draw(s)--(s+1);
\draw(t-1)--(t);
\node[] at (4.25,-0.15){\highlight{$\calP^\exx_\gamma$}};
\draw(9.25,2.15) -- (9.25,-0.15);
\node[] at (10.5,1.5) {$\boxed{1}$};
\node[] at (11.5,1.5){$\cdots$};
\node(s') at (12.5,1.5){$\boxed{u}$};
\node(t') at (13,0.5){$\boxed{v}$};
\node[] at (14,0.5){$\cdots$};
\node[] at (15,0.5) {$\boxed{h}$};
\node[] at (11.5,-0.15){\highlight{$\calP^\exx_{\gamma'}$}};
\draw(s')--(t');
\end{tikzpicture}
\end{center}

 Every order ideal $I'$ of $\calP_{\gamma'}^\exx$ can naturally be viewed as an order ideal $I$ of $\calP_{\gamma}^\exx$ where we include the set $\langle \calP^\exx_{\gamma}(v-1)\rangle$ if $\calP^\exx_{\gamma}(v) \in I'$.Since we assumed  $\calP_\gamma(u+1)$ and $\calP_\gamma(v-1)$, this correspondence of sending $I' \in J(\calP_{\gamma'}^\exx)$ to $I \in J(\calP_{\gamma}^\exx)$ is a non-surjective injection, implying $\vert J(\calP_{\gamma'}^\exx) \vert < \vert J(\calP_{\gamma}^\exx) \vert$.

 Now, suppose that $\calP_\gamma(u+1)$ and $\calP_\gamma(v-1)$ are comparable. This only occurs if the self-intersection comes from $\gamma$ winding around a single integral point. This winding introduces a chain in $\calP_\gamma$. By either considering order ideals which contain this entire chain or which do not intersect this chain, one can similarly build an injection from $J(\calP_{\gamma'}^\exx)$ to $J(\calP_\gamma^\exx)$.

Finally, suppose that $C$ can be pulled back to the first triangle with $\gamma$ passes through. 

\begin{center}
\begin{tikzpicture}[scale = 3]
\draw(0,0) to (1,0) to (0,1) to  (0,0); 
\draw[red, thick] (0,0) to [out = -45, in = 90] (0.2,-0.2);
\draw[blue, thick] (0,0) to [out = 45, in = 180] (1,0.35) to [out = 0, in = -90] (1.8,0.8) to [out = 90, in = 0] (1.4,1.2) to [out = 180, in = 60] (1,0.3) to [out = 240, in = 90] (0.22,-0.2);
\node[right, blue] at (1.8,0.8){$\gamma$};
\node[red, left] at (0.13,-0.13){$\gamma'$};
\node[above, xshift = -5pt] at (1,0.35){$C$};
\end{tikzpicture}   
\end{center}

In this case, $\calP_{\gamma'}^\exx$ is of the form $\calP_{\gamma}^\exx[v,h]$ where again $h$ denotes the cardinality of $\calP_{\gamma}^\exx$. Our set-up here implies that there exists at least one value $i < v$ such that $\calP_{\gamma}^\exx(i)$ and $\calP_{\gamma}^\exx(v)$ are incomparable, and therefore it is clear that $\vert J(\calP_{\gamma'}^\exx) \vert < \vert J(\calP_{\gamma}^\exx) \vert$.
\end{proof}

\begin{remark}
The arc $\gamma'$ constructed in the proof of Lemma \ref{lem:SelfIntNotMinimal} is one of the arcs which appears when resolving the self-intersection of $\gamma$. This result could have been shown more generally by discussing skein relations on posets, as in \cite{banaian2024skein} and, in the language of snake graphs, in \cite{CS15}.  However, our construction of the poset $\calP_\gamma^\exx$ is different from the one used for cluster algebras. In order to use skein relations here, we would have to verify that every self-intersection gives rise to the correct type of configuration of the poset. Moreover, to completely discuss skein relations from self-intersections, we must associate a poset to a closed curve.  Our current proof bypasses these difficulties. 

%Looking forward, in Lemma \ref{lem:FinalStep} we are able to classify the intersections needed (with help from Proposition \ref{prop:StraightenedIsShortest}, and hence there we can use skein relations without difficulty.
\end{remark}

Our next step will be to identify an arc in each homotopy class which is minimal under $\ell_k$. Even though  this step is trivial in the $k = 0$, our description closely follows a construction within the proof of \cite[Theorem 3.5]{LLRS}. Given an arc $\gamma$ on $\mathbb{R}^2$ with endpoints $A,B \in \mathbb{Z}^2$, imagine pulling $\gamma$ tight so that $\gamma$ nearly meets a set of points in $\mathbb{Z}^2$. Call this ``straightened'' arc $\gamma^{\stir}$. The path of $\gamma^\stir$ can be broken up into a set of $r > 1$ line segments between ($\epsilon$ neighborhoods around) points in $\mathbb{Z}^2$, $A = Q_0, Q_1,\ldots,Q_r = B$, and $r-1$ angles, $\theta_1,\theta_2,\ldots,\theta_{r-1}$, which $\gamma^\stir$ takes around these points. These angles necessarily have values in $[-2\pi,-\pi] \cup [\pi,2\pi]$; otherwise, the arc could be pulled even tighter. If $\gamma$ is homotopic to $\gamma_{AB}^L$, then $\gamma^\stir = \gamma_{AB}^L$, and similarly for $\gamma_{AB}^R$. For example, the red arc, $\gamma_2$, in Example \ref{Ex:DoesNotDistinguishHomotopy} straightens to the blue arc, $\gamma_1$. 

In the following, we extend Convention \ref{conv:gammaABLR} to all arcs $\gamma^\stir$. This is necessary to make the construction of the posets $\calP_{\gamma^\stir}$ well-defined.

\begin{convention}\label{conv:StraightenedArcs}
Let $\delta$ be the $i$-th straight line segment comprising a straightened arc $\gamma^\stir$ which contains $r$ line segments total. Since $\delta$ is of the form $\gamma_\fpq$, it crosses a line from $\mathcal{L}$, say $\tau$, at its midpoint. Let $j$ be the smallest positive integer satisfying one of the following: $\theta_{i-j}$ and $\theta_{i+j}$ are the same sign; $i-j = 0$; or $i+j = r$. If $i-j = 0$ and $i+j = r$, implying that $i = \frac{r}{2}$ and for every $j'<j$, $\theta_{i-j}$ and $\theta_{i+j}$ are opposite signs, then we have $\delta$ intersect $\tau$ at exactly its midpoint. Otherwise, at least one of $\theta_{i-j}$ and $\theta_{i+j}$ is defined and each of these defined angles has the same sign. If this sign is negative (positive), we perturb $\delta$ to intersect $\tau$ slightly to the left (right) of its midpoint. \footnote{This Convention is refined from an earlier version.}
\end{convention}

\begin{figure}
    \centering
\begin{tikzpicture}[scale=1.5]
\draw (0,0) -- (4,0) -- (4,4) -- (0,4) -- (0,0);
\draw (1,0) -- (1,4);
\draw (2,0) -- (2,4);
\draw(3,0) -- (3,4);
\draw(0,1) -- (4,1);
\draw(0,2) -- (4,2);
\draw(0,3) -- (4,3);
\draw (0,1) -- (1,0);
\draw (0,2) -- (2,0);
\draw(0,3) -- (3,0);
\draw(0,4) -- (4,0);
\draw(1,4) -- (4,1);
\draw(2,4) -- (4,2);
\draw(3,4) -- (3,4);
\draw[red, thick, ->] (0,2) to (.4,2.4);
\draw[red, thick] (.4,2.4) to [out = 45, in = 180, looseness = 2] (.55,2.55) to (0.9,2.95) to [out = 90, in = 90, looseness=5] (1.1,2.95)to (1.9,2.55) to [out = 45, in = 120, looseness = 2] (2.1,2.45) to (2.9,2.05) to [out = 30, in = 0, looseness = 7](2.9,1.9) to (2.55,1.55) to [out = 300, in = 0, looseness = 2] (2.4,1.4) to (2.1,1.1) to [out = 180, in = 180, looseness = 5](2.05,0.85) to  (3,0) ;
\end{tikzpicture}
    \caption{An example of an arc $\gamma^\stir$. This arc consists of 4 line segments and three angles, $\theta_1,\theta_2$ and $\theta_3$. The first two angles are negative whereas the last is positive. We have perturbed the central intersections of the straight line segments to highlight Convention \ref{conv:StraightenedArcs}.}
    \label{fig:placeholder}
\end{figure}
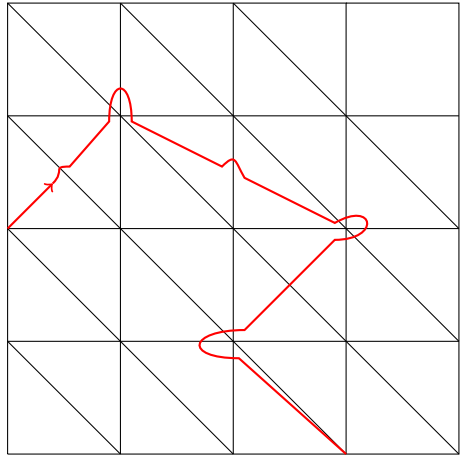

\begin{prop}\label{prop:StraightenedIsShortest}
Given $\gamma$, an arc on $\mathbb{R}^2$ with endpoints in $\mathbb{Z}^2$, we have
\[
\ell_k(\gamma) \geq \ell_k(\gamma^\stir).
\]
\end{prop}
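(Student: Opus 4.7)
The plan is to proceed by induction on the number of intersections of $\gamma$ with line segments in $\mathcal{L}$. A preliminary reduction uses Lemma \ref{lem:SelfIntNotMinimal}: any non-contractible self-intersection of $\gamma$ can be resolved to produce an arc in the same homotopy class with strictly smaller $\ell_k$ and the same straightening. So I would first assume that $\gamma$ is simple.

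For the base case, when $\gamma$ already realizes the minimum number of crossings with $\mathcal{L}$ in its homotopy class, I would argue that $\gamma$ and $\gamma^\stir$ produce the same weighted poset. At each crossing, the side of the midpoint that $\gamma$ passes is determined by which vertex of the ambient triangle the arc wraps around, which is a homotopy invariant. Convention \ref{conv:StraightenedArcs} was rigged so that $\gamma^\stir$ makes the same choice, and the algorithm of Algorithm \ref{algo:Old} depends only on the crossing sequence together with these left/right choices. Hence $\calP_\gamma = \calP_{\gamma^\stir}$ and $\ell_k(\gamma) = \ell_k(\gamma^\stir)$.

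For the inductive step, I would locate an elementary bigon: a lattice point $V \in \mathbb{Z}^2$ and two consecutive crossings of $\gamma$ with edges $\tau_a, \tau_b$ incident to $V$ such that the piece of $\gamma$ between these crossings is a U-turn that can be pulled across $V$. Let $\gamma'$ be the arc that results; then $\gamma'$ has two fewer crossings, $(\gamma')^\stir = \gamma^\stir$, and by the inductive hypothesis $\ell_k(\gamma') \geq \ell_k(\gamma^\stir)$. It then suffices to prove $\ell_k(\gamma) \geq \ell_k(\gamma')$. The U-turn contributes a contiguous block of $\calP_\gamma$ (two balanced pairs when $k>0$, or two elements when $k=0$), expanding to $2(k+1)$ consecutive elements of $\calP_\gamma^\exx$ after applying Algorithm \ref{algo:New}. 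Because both $\tau_a$ and $\tau_b$ meet at $V$, the local Hasse diagram of this block is a single ``V'' or ``inverted V'' centered at the crossing closest to $V$, and deleting it yields $\calP_{\gamma'}^\exx$. Translating via Corollary \ref{cor:CountLength}, the shape of $\calP_\gamma^\exx$ differs from that of $\calP_{\gamma'}^\exx$ by a local splitting of one entry, and the inequality $\mathcal{N}[\ldots] \geq \mathcal{N}[\ldots]$ follows from a direct application of Lemma \ref{lem:ContFracSkein} (or, alternatively, by viewing the bigon removal as a Type 1 or Type 2 resolution and appealing to Proposition \ref{prop:SkeinRelationsOnPosets}).

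The main obstacle I anticipate is the case analysis in the inductive step. Even once the bigon is isolated, one has several configurations to sort out: the orientation of $\gamma$ near $V$, which side of $\gamma$ the vertex $V$ lies on, and which two of the three slope classes in $\mathcal{L}$ the edges $\tau_a$, $\tau_b$ belong to. For $k>0$ the balanced-pair structure doubles the number of poset elements to track, and one must verify in each configuration that the inserted block's cover relations match a legitimate Type 1 or Type 2 resolution of $\calP_{\gamma'}^\exx$ so that Proposition \ref{prop:SkeinRelationsOnPosets} applies. A more hands-on alternative would bypass skein relations and construct an explicit injection $J(\calP_{\gamma'}^\exx) \hookrightarrow J(\calP_\gamma^\exx)$ by sending an order ideal $I'$ to $I' \cup \langle \calP_\gamma^\exx(i+j)\rangle$ for an appropriate index $j$ inside the inserted block, with the incomparability of some element in the block with the ``outside'' of $I'$ ensuring the map is not surjective; the $k=0$ portion of the argument in Lemma \ref{lem:SelfIntNotMinimal} is a prototype for this style.
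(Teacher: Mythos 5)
The base case of your induction is false, and it is exactly where the content of the proposition lives. You claim that once $\gamma$ realizes the minimal number of crossings with $\mathcal{L}$ in its homotopy class, the side of the midpoint on which $\gamma$ crosses each segment is a homotopy invariant, so $\calP_\gamma = \calP_{\gamma^\stir}$. The paper's own Example \ref{Ex:DoesNotDistinguishHomotopy} is a direct counterexample: $\gamma_1$ and $\gamma_2$ are homotopic, have the same (minimal) crossing sequence, and $\gamma_2^\stir = \gamma_1$, yet $\calP_{\gamma_1} \neq \calP_{\gamma_2}$ and (for $k=2$) $\ell_k(\gamma_1) = 25$ while $\ell_k(\gamma_2) = 49$. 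The position of a crossing relative to the midpoint of a segment $\tau$ depends on the actual geometry of the arc, not its homotopy class, and for $k>0$ this flips the cover relation inside the corresponding balanced pair, changing the weighted poset and the length. Your base case asserts an equality that the proposition is specifically designed to replace with an inequality.

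Note also that the paper's standing convention is that all arcs already have a minimal set of intersections with $\mathcal{L}$, so your bigon-removal inductive step is addressing a reduction the paper performs by fiat; the real work is entirely in what you treated as the base case. The paper's argument instead locates the first crossing (in a compatible chronological ordering) where $\gamma$ and $\gamma^\stir$ disagree about left versus right, observes that at such a spot the shape of $\calP_\gamma^\exx$ contains an entry equal to $k$ (or $k+1$), and applies the continued-fraction identities of Lemma \ref{lem:ContFracSkein} to show that replacing that crossing by the straightened one weakly decreases $\mathcal{N}[\,\cdot\,]$, hence $\ell_k$; iterating over all disagreements yields $\ell_k(\gamma) \geq \ell_k(\gamma^\stir)$. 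To repair your proof you would need to supply precisely this comparison between homotopic arcs with identical crossing sequences but different crossing positions, which your outline currently assumes away.
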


\begin{proof}
If $\calP_\gamma = \calP_{\gamma^\stir}$, then by definition $\ell_k(\gamma) = \ell_k(\gamma^\stir)$. If $k = 0$, then the statement also holds without further proof thanks to \cite{LLRS}. 

So suppose $k > 0$ and that $\calP_\gamma \neq \calP_{\gamma^\stir}$. It suffices to consider $\gamma$ such that $\gamma$ and $\gamma^\stir$ cross the same sequence of line segments; otherwise, $\gamma$ could be pulled tight past unnecessary crossings. Assign to each poset a compatible chronological ordering and consider the first place where the posets differ with respect to this ordering. This means that there is an arc $\tau$ from $\mathcal{L}$ such that $\gamma$ crosses $\tau$ closer to its left endpoint and and $\gamma^\stir$ crosses $\tau$ closer to its right endpoint or vice versa. There are three ways in which this can happen, as are drawn below. The difference between the cases (ii) and (iii) is that, in case (ii), the intersections between $\gamma^\stir$ and the vertical lines are different types (left versus right) and in case (iii) the intersections between $\gamma^\stir$ and the vertical lines are the same type. 

\begin{center}
\begin{tikzpicture}[scale=3]
\draw (0,0) to (1,0) to node[right,yshift = 20pt]{$\tau'$} (1,1) -- (0,1) -- (0,0);
\draw(1,0) to node[right,xshift = -20pt, yshift = 20pt]{$\tau$} (0,1);
\draw[thick, blue] (0,-.2) to node[above, xshift = -10pt, yshift = -3pt]{$\gamma^\stir$} (1.1,0.5);
\draw[thick, red] (0,-.15) to [out = 45, in = 215] node[left]{$\gamma$}(0.5,0.8);
\draw[thick, red] (0.5,0.8) to [out = 35, in = 210] (1.1,0.55);
\node[] at (0.5,-0.2){$(i)$};
\draw (2,0) -- (3,0) to node[right,yshift = 30pt]{$\tau'$} (3,1) -- (2,1) -- (2,0);
\draw(3,0)to node[right,xshift = -20pt, yshift = 20pt]{$\tau$} (2,1);
\draw[thick, blue] (1.9,0.05) to node[above, xshift = -10pt, yshift = -3pt]{$\gamma^\stir$} (3.1,0.65);
\draw[thick, red] (1.9,0.15) to [out = 45, in = 215](2.5,0.8);
\draw[thick, red] (2.5,0.8) to [out = 35, in = 210] (3.1,0.7);
\node[red] at (2.9,0.8){$\gamma$};
\node[] at (2.5,-0.2){$(ii)$};
\draw (4,0) -- (5,0) to node[right,yshift = 20pt]{$\tau'$} (5,1) -- (4,1) -- (4,0);
\draw(5,0)to node[right,xshift = -20pt, yshift = 20pt]{$\tau$}(4,1);
\draw[thick, blue] (3.9,0.1) to node[above, xshift = -10pt, yshift = -3pt]{$\gamma^\stir$} (5.1,0.4);
\draw[thick, red] (3.9,0.15) to [out = 45, in = 215](4.5,0.8);
\draw[thick, red] (4.5,0.8) to [out = 35, in = 210] (5.1,0.45);
\node[red] at (4.8,0.8){$\gamma$};
\node[] at (4.5,-0.2){$(iii)$};
\end{tikzpicture}
\end{center}

Suppose first we are in Case (i). Then, the shape of $\calP_\gamma^\exx$ is of the form $a_1,\ldots,a_{i-1},k$,\\$a_{i+1},\ldots,a_n$. Applying Equation \ref{eq:ContFracSkein1} from Lemma \ref{lem:ContFracSkein}, we have

\begin{align*}
\mathcal{N}[a_1,\ldots,a_{i-1},k,a_{i+1},\ldots,a_n] &= \mathcal{N}[a_1,\ldots,a_{i-1}+k+a_{i+1},\ldots,a_n]\\
&+ k\mathcal{N}[a_1,\ldots,a_{i-1}-1,1,a_{i+1}-1,\ldots,a_n].
\end{align*}

Notice that $a_1,\ldots,a_{i-1}+k+a_{i+1},\ldots,a_n$ is the shape of the poset $\calP^\exx_{\gamma'}$ where $\gamma'$ follows $\gamma_{AB}^L$ through its intersection with $y$ then after follows $\gamma$. We have shown that $\ell_k(\gamma) \geq \ell_k(\gamma')$, and so we can continue the argument with $\gamma'$ instead. Similarly, if we are in Case (iii), then we can follow the same procedure with  Equation \ref{eq:ContFracSkein2} from Lemma \ref{lem:ContFracSkein}.

Therefore, it remains to consider only the case where $\gamma$ and $\gamma^\stir$ differ in at least one position as in Case (ii). In this case, we will instead consider the posets $\calP_\gamma$ and $\calP_{\gamma^\stir}$. We claim that in this case, $\calP_{\gamma}$ has a reverse kissing self-overlap, and that the poset $\calP_{34}$, as in Definition \ref{def:ResolveRKSO}, is equal to $\calP_{\gamma'}$ where $\gamma'$ differs from $\gamma^\stir$ in less places than $\gamma$ does. 

It is easiest if we first suppose $\gamma$ and $\gamma^\stir$ differ in exactly one position as in Case (ii). Let $i$ be such that $\calP_\gamma(i)$ and $\calP_{\gamma}(i+1)$ are the two elements labeled by $\tau$ and similarly for $\calP_{\gamma^\stir}$. Let $\delta$ be the line segment of the form $\overline{AB}$ which $\gamma^\stir$ is following at this point of difference with $\gamma$. In particular, Convention \ref{conv:StraightenedArcs} does not apply to $\delta$, and so we know that $\delta$ either intersects $\tau$ or $\tau'$ at its midpoint. 

Suppose first that $\delta$ intersects $\tau'$ at its midpoint. We  further split this case into the behavior of $\gamma^\stir$ at the endpoints of $\delta$.  For example, if $\delta$ is the first line segment of $\gamma^\stir$,  then $\calP_\gamma$ has the form $\calP_\gamma[1,i] \prec \calP_\gamma[i+1,2i] \succ \calP_\gamma(2i+1) \cdots$ while $\calP_{\gamma^\stir}$ has the form $\calP_{\gamma^\stir}[1,i] \succ \calP_{\gamma^\stir}[i+1,2i] \succ \calP_{\gamma^\stir}(2i+1) \cdots$. 
We know that in each case we have element $2i$ covers element $2i+1$ since element $2i$ is labeled by a line segment parallel to $\tau$ and element $2i+1$  is labeled by a line segment parallel to $\tau'$. 
We can check $\calP_\gamma[1,i]$ and $\calP_\gamma[i+1,2i]$ form a reverse kissing self-overlap and that, in this case where $\gamma$ and $\gamma^\stir$ only differ in one intersection, $\calP_{\gamma^\stir} = \calP_{34}$ from Definition \ref{def:ResolveRKSO}. 

If $\delta$ is not the first line segment of $\gamma^\stir$, then one can consider two further cases depending on the sign of the first angle preceding $\delta$. If the sign is negative, it is important to remember it must be no more than $-\pi$. In each case, one finds that $\calP_\gamma$ has a reverse kissing self-overlap and that $\calP_{\gamma^\stir}$ appears in its resolution. 

The other case to consider is when $\delta$ intersects $\tau$ at its midpoint. To identify the maximal self-overlap in the two posets, we must consider the integer $j$, as in Convention \ref{conv:StraightenedArcs}, which identifies the largest neighborhood around $\tau$ where the angles of $\gamma^\stir$ maintain alternating signs. As in the definition of $j$, there are a few cases based on how $\gamma^\stir$ behaves outside of this neighborhood, but in each case the same method as previously can be used. 

It $\gamma$ contains multiple such deviations from $\gamma^\stir$, we proceed by induction. If deviations are sufficiently far apart, they can be resolved independently. If there are two deviations in opposite directions, then $\gamma$ and $\gamma^\stir$ intersect and we can use part (1) of Proposition \ref{prop:SkeinRelationsOnPosets}  to replace $\gamma$ with an arc with less deviations from $\gamma^\stir$. Finally, if there are multiple deviations from Case (ii) whose overlaps interact, one can check that they combine into a larger reverse kissing self-overlap.  

\end{proof}

\begin{remark}\label{rem:MustRespectConvention}
A special case of Proposition \ref{prop:StraightenedIsShortest} is showing that if $\gamma$ is an arc which is homotopic to $\gamma^\stir$ but which does not respect Convention \ref{conv:StraightenedArcs} at one ``middle crossing'', then $\ell_k(\gamma) > \ell_k(\gamma^\stir)$.
\end{remark}

\begin{remark}
In joint work with Sen \cite{banaian2024generalization}, we discuss how the construction of $\calP_{\gamma_\fpq}$ when $k = 1$ is equivalent to the construction of a snake graph from a once-punctured sphere with three orbifold points, using \cite{BanaianKelley}. In particular, see \cite[Theorem 5]{banaian2024generalization}. In this framework, non-straightened arcs would correspond to arcs on an orbifold which have self-intersections, and the arc $\gamma'$ constructed in the proof of Proposition \ref{prop:StraightenedIsShortest} would be one arc resulting from resolving this self-intersection. 
\end{remark}

Now, we will focus on straightened arcs. Our final goal will be to show that  $\gamma_{AB}^L$ satisfies $\ell_k(\gamma_{AB}^L) = \vert AB \vert_k$ and similarly for  $\gamma_{AB}^R$. It will be helpful to record that these two arcs have the same $k$-Markov length.

\begin{lemma}\label{lem:LeftAndRightEqual}
Given $A,B \in \mathbb{Z}^2$,  the posets $\calP^\exx_{\gamma_{AB}^L}$ and $\calP^\exx_{\gamma_{AB}^R}$ are the same up to reversal. In particular, $\ell_k(\gamma_{AB}^L) = \ell_k(\gamma_{AB}^R)$.
\end{lemma}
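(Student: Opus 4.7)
The plan is a symmetry argument: exhibit an orientation-preserving isometry of $\mathbb{R}^2$ that maps $\gamma_{AB}^L$, after orientation reversal, onto $\gamma_{AB}^R$, while preserving the lattice $\mathbb{Z}^2$, the set $\mathcal{L}$, and the label function $l$. First, reverse the orientation of $\gamma_{AB}^L$. Since the designations ``left'' and ``right'' relative to an arc depend on its orientation, reversal swaps them, and the reversed curve is precisely $\gamma_{BA}^R$ as a directed arc. The sequence of crossings with $\mathcal{L}$ is read in the opposite order, so $\calP_{\gamma_{BA}^R}$ agrees with $\calP_{\gamma_{AB}^L}$ after reversal of the chronological labeling.

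Next, apply the rotation $\rho$ of $\mathbb{R}^2$ by $\pi$ about the midpoint $\tfrac{1}{2}(A+B)$. Because $A + B \in \mathbb{Z}^2$, $\rho$ restricts to a bijection of $\mathbb{Z}^2$; because the slopes $0$, $\infty$, and $-1$ are invariant under $\pi$-rotation, $\rho$ maps $\mathcal{L}$ to itself and respects $l$ (which is determined by slope alone); and because $\rho$ is orientation-preserving, it preserves the ``right-hand side'' of any oriented arc. Since $\rho$ swaps $A$ and $B$, it sends $\gamma_{BA}^R$ to an arc from $A$ to $B$ with right-perturbations, which is precisely $\gamma_{AB}^R$. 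All ingredients of Algorithm \ref{algo:Old} are preserved, so $\calP_{\gamma_{BA}^R} = \calP_{\gamma_{AB}^R}$. Combining the two steps gives $\calP_{\gamma_{AB}^R} = \overline{\calP_{\gamma_{AB}^L}}$.

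To transfer this to the extended posets, I will note that Algorithm \ref{algo:New} makes its choices purely from the underlying weighted poset structure---in particular, whether a balanced pair has its $k$-weighted element below or above its $1/k$-weighted one---data that is unchanged by chronological reversal. Hence $\calP^\exx_{\gamma_{AB}^R}$ equals $\calP^\exx_{\gamma_{AB}^L}$ up to reversal, and by Lemma \ref{lem:ExtendPoset}, $\ell_k(\gamma_{AB}^L) = \vert J(\calP^\exx_{\gamma_{AB}^L}) \vert = \vert J(\calP^\exx_{\gamma_{AB}^R}) \vert = \ell_k(\gamma_{AB}^R)$. The main subtlety lies in the bookkeeping of Conventions \ref{conv:gammaABLR} and \ref{conv:StraightenedArcs}: one must verify that orientation-reversal truly swaps ``left'' and ``right'' along the curve, while the orientation-preserving $\rho$ truly preserves these designations, including at the midpoint crossings mandated by Convention \ref{conv:StraightenedArcs}. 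The example with $A = (0,0)$ and $B = (2,4)$ immediately preceding the statement serves as a concrete sanity check.
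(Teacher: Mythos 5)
Your proof is correct, but it takes a genuinely different route from the paper. You argue by symmetry: orientation reversal turns $\gamma_{AB}^L$ into $\gamma_{BA}^R$ (since ``left'' and ``right'' are defined relative to the direction of travel), and the point reflection $x \mapsto (A+B) - x$, which preserves $\mathbb{Z}^2$, $\mathcal{L}$, the slope-determined labels, and orientation, carries $\gamma_{BA}^R$ to $\gamma_{AB}^R$; hence $\calP_{\gamma_{AB}^R} = \overline{\calP_{\gamma_{AB}^L}}$ and the extension commutes with reversal. The paper instead inducts on $g = \gcd(p,q)$: the base case $g=1$ compares the shapes of $\calP^\exx_{\gamma_{AB}^L}$ and $\calP^\exx_{\gamma_{AB}^R}$ directly, invoking the near-palindrome property of the shape (Lemma \ref{lem:NearPalindrome}, imported from \cite{BG}) together with Lemma \ref{lem:Reverse}; the inductive step decomposes each poset into a copy of $\calP^\exx_{\gamma_{AB'}^{L/R}}$, a connecting chain from the semicircle around $B'$, and a copy of $\calP^\exx_{\gamma_\fpq}$. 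Your approach is more conceptual, avoids the induction entirely, and does not rely on the external input from \cite{BG} --- indeed it would yield an independent proof of the palindromic part of Lemma \ref{lem:NearPalindrome}. The cost is that all of the substance is concentrated in the equivariance claims you defer: one must check from Algorithm \ref{algo:Old} that orientation reversal flips both the ``shared endpoint to the right'' test (governing the relation between consecutive crossings) and the ``closer to the right endpoint'' test (governing the relation and the $k$ versus $\tfrac1k$ weights within each balanced pair), so that the resulting weighted labeled poset is exactly the chronological reversal; and that the orientation-preserving reflection preserves both tests, including the perturbed midpoint crossings of Convention \ref{conv:gammaABLR}. These verifications are routine but are not optional --- they are the entire proof --- so they should be written out rather than flagged as ``bookkeeping.''
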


\begin{proof}
By Lemma \ref{lem:Translate}, it suffices to set $A = (0,0)$. Let $B = (q,p)$.  If $k = 0$, then we are done by the proof of \cite[Lemma 3.2]{LLRS}. 
So suppose $k >0$. We will induct on $g:=\gcd(p,q)$.

First, suppose $g = 1$. Let $h_\fpq:= \vert \calP^\exx_{\gamma_\fpq} \vert$.  Let  $a_1,\ldots,a_n$ denote the shape of $\calP_{\gamma_\fpq}^\exx$.   
Note the only  difference between the set of cover relations of $\calP_{\gamma_{AB}^L} = \calP_{\gamma_\fpq}$ and $\calP_{\gamma_{AB}^R}$ is that we have $\calP_{\gamma_{AB}^L}(\frac12 h_\fpq) \prec  \calP_{\gamma_{AB}^L}(\frac12 h_\fpq+1)$ and $\calP_{\gamma_{AB}^R}(\frac12 h_\fpq) \succ  \calP_{\gamma_{AB}^R}(\frac12 h_\fpq+1)$ (see Convention \ref{conv:gammaABLR}).
As a consequence,  the shape of $\calP_{\gamma_{AB}^R}^\exx$ is $a_1,\ldots,a_{\frac{n}{2}-1},a_{\frac{n}{2}+1},a_{\frac{n}{2}},a_{\frac{n}{2}+2},\ldots,a_n$.  By Lemma \ref{lem:NearPalindrome},  this is equal to the reversal of $a_1,\ldots, a_n$. Therefore,  by Lemma \ref{lem:Reverse} and Corollary  \ref{cor:CountLength}, $\ell_k(\gamma_{AB}^L) = \ell_k(\gamma_{AB}^R)$ in this case.

Now, suppose we have shown our claim for all pairs of positive integers with greatest common divisor less than $g$, and suppose $\gcd(p,q) = g$.  Let $r$ and $s$ be such that $p = gr$ and $q = gs$,  and let $p' = (g-1)r$ and $q' = (g-1)s$. Set $B = (q',p')$. 
%S $\gcd(r,s) = 1$, recall the poset $\calP_{\gamma_{(0,0)(s,r)}^L}$ is equivalent to $\calP_{\gamma_\frs}$ while the base case  showed $\calP_{\gamma_{(0,0)(q,p)}^R}$ is equivalent to $\overline{\calP_{\gamma_\fpq}}$.  

The poset $\calP_{\gamma_{AB}^L}^\exx$ has $\calP_{\gamma_{AB'}^L}^\exx$ and $\calP_{\gamma_\fpq}^\exx$ as subposets,  connected as below on the left. This is true because, as in the base case, $\calP_{\gamma_{(0,0)(s,r)}^L}$ is equivalent to $\calP_{\gamma_\frs}$. The vertical dots represent a chain consisting of $3k+3$ elements which comes from traveling clockwise in a semicircle around $B'$, from $ \pi + \epsilon$ to $\epsilon$ for $\epsilon > 0$. 
Similarly, the poset $(\calP_{\gamma_{AB}^R})^\exx$ has $\calP_{\gamma_{(q,p)B}^R}^\exx$ and $\overline{\calP_{\gamma_\fpq}^\exx}$ as subposets, as below on the right. By Lemma \ref{lem:Translate},  $\calP_{\gamma_{(q,p)B}^R}^\exx$ is equivalent to $\calP_{\gamma_{AB'}^R}^\exx$. 

\begin{center}
\begin{tabular}{cc}
\begin{tikzpicture}
\node(s'r') at (0,0){\highlight{$\calP_{\gamma_{AB'}^L}^\exx$}};
\node(31) at (1.5,1){$\tau$};
\node(dots1) at (1.5,0){$\vdots$};
\node(21) at (1.5,-1){$\tau'$};
\node(pq) at (3,0){\highlight{$\calP_{\gamma_\fpq}^\exx$}};
\draw(s'r') -- (31);
\draw(21) -- (pq);
\end{tikzpicture}&
\begin{tikzpicture}
\node(s'r') at (0,0){\highlight{$\overline{\calP_{\gamma_\fpq}^\exx}$}};
\node(31) at (1.5,1){$\tau'$};
\node(dots1) at (1.5,0){$\vdots$};
\node(21) at (1.5,-1){$\tau$};
\node(pq) at (3,0){\highlight{$\calP_{\gamma_{AB'}^R}^\exx$}};
\draw(s'r') -- (21);
\draw(31) -- (pq);
\end{tikzpicture}\\
\end{tabular}
\end{center}

By the inductive hypothesis, $\calP_{\gamma_{AB'}^L}^\exx$ and $\calP_{\gamma_{AB'}^R}^\exx$ are related by reversal, which implies the same for $\calP_{\gamma_{AB}^L}^\exx$ and $\calP_{\gamma_{AB}^R}^\exx$. 
\end{proof}

Our final step will be showing that straightened arcs other than those of the form $\gamma_{AB}^L$ or $\gamma_{AB}^R$ do not minimize $\ell_k$. 

\begin{lemma}\label{lem:FinalStep}
Let $\gamma^\stir$ be a straightened arc with endpoints in distinct points $A,B \in \mathbb{Z}^2$. If $\gamma^\stir$ is not homotopic to $\gamma_{AB}^L$ or $\gamma_{AB}^R$, then $\ell_k(\gamma^\stir) > \ell_k(\gamma_{AB}^L)$.
\end{lemma}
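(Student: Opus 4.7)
My plan would adapt the strategy from \cite[proof of Theorem 3.5]{LLRS} to the $k>0$ setting, leveraging the poset skein relations of Proposition \ref{prop:SkeinRelationsOnPosets}. I would argue by strong induction on a complexity measure of $\gamma^\stir$—most naturally the cardinality of $\calP^\exx_{\gamma^\stir}$, which equals the total number of intersections of $\gamma^\stir$ with arcs in $\mathcal{L}$ counted with multiplicity from Algorithm \ref{algo:New}. For the base case, the minimum of this quantity over all straightened arcs with endpoints $A,B$ is realized precisely by $\gamma_{AB}^L$ and $\gamma_{AB}^R$ (these have the same value by Lemma \ref{lem:LeftAndRightEqual}), and in that case the hypothesis of the lemma is vacuous.

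For the inductive step, I would first \emph{normalize} $\gamma^\stir$ so that every turning angle $\theta_i$ has magnitude exactly $\pi$. If some $|\theta_i|\in(\pi,2\pi]$, the semicircular piece of $\gamma^\stir$ around $Q_i$ crosses strictly more lattice arcs than the $\pi$-semicircle around the intermediate points of $\gamma_{AB}^L$, so $\calP^\exx_{\gamma^\stir}$ contains a strictly longer chain at the position associated to $Q_i$ than the chain appearing in the proof of Lemma \ref{lem:LeftAndRightEqual}. Trimming one element off this chain yields a new straightened arc $\gamma'$ with strictly smaller $\ell_k$ by Corollary \ref{cor:CountLength}, still not homotopic to $\gamma_{AB}^L$ or $\gamma_{AB}^R$, and induction applies.

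Once all $|\theta_i|=\pi$, the hypothesis that $\gamma^\stir$ is homotopic to neither $\gamma_{AB}^L$ nor $\gamma_{AB}^R$ forces either some intermediate lattice point $Q_i$ to lie off the segment $\overline{AB}$, or the collection $\{\theta_i\}$ to contain angles of both signs. In either case I would locate a configuration inside $\calP^\exx_{\gamma^\stir}$ admitting a Type 1 or Type 2 skein resolution at the position corresponding to $Q_i$, and by Proposition \ref{prop:SkeinRelationsOnPosets} conclude
\[
\ell_k(\gamma^\stir) \;=\; \vert J(\calP_3) \vert \cdot \vert J(\calP_4) \vert \;+\; \vert J(\calP_5) \vert \cdot \vert J(\calP_6) \vert,
\]
where each resolved poset is the extended poset of a straightened arc (or multicurve, with the empty poset contributing $1$) of strictly smaller complexity. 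One of the two products identifies, after applying the inductive hypothesis, with $\ell_k(\gamma_{AB}^L)$, and the other is strictly positive, delivering the desired strict inequality.

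The main obstacle is the geometric dictionary: verifying that the abstract poset resolutions of Definitions \ref{def:Type1Resolution}--\ref{def:Type2Resolution} at the position corresponding to $Q_i$ are realized by explicit geometric uncrossings of $\gamma^\stir$ at $Q_i$, and that one of the four output posets does correspond to $\gamma_{AB}^L$ (possibly after several inductive steps). The two geometric subcases—an off-line $Q_i$ versus a sign-switch between $\theta_i$ and $\theta_{i+1}$—will likely require slightly different resolutions (one of Type 1, one of Type 2), so a short case analysis is expected. Verifying strict positivity of the ``extra'' summand amounts to checking that the relevant posets are nonempty, which follows from the fact that $Q_i$ is a genuine intermediate lattice point of $\gamma^\stir$.
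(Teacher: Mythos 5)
Your overall skeleton (induct/descend, classify the first angle $\theta_{i+1}\neq\pi$, resolve an intersection via Proposition \ref{prop:SkeinRelationsOnPosets}) matches the paper's, but two of your key steps have genuine gaps. First, the ``normalization'' by trimming: when $k>0$ each crossing of an arc with $\mathcal{L}$ contributes a balanced pair to $\calP_\gamma$ and hence a chain of $k+1$ elements to $\calP^\exx_\gamma$, so ``trimming one element off the chain'' at $Q_i$ does not correspond to any geometric modification of the arc, and you have not shown that the trimmed poset is $\calP^\exx_{\gamma'}$ for an actual straightened arc $\gamma'$ with the same endpoints (nor that $\gamma'$ remains outside the homotopy classes of $\gamma_{AB}^L$ and $\gamma_{AB}^R$ --- it need not). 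The paper does not normalize at all: the cases $\theta_{i+1}\in(\pi,2\pi)$ and $\theta_{i+1}=2\pi$ are handled by the same mechanism as $\theta_{i+1}\in[-2\pi,-\pi]$, namely by intersecting $\gamma^\stir$ with an auxiliary arc and resolving. (Also, once every $|\theta_i|=\pi$ the points $Q_i$ are automatically collinear with $A$ and $B$, so your ``off-line $Q_i$'' subcase cannot occur; only the mixed-sign case remains.)

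Second, and more seriously, your displayed identity is malformed: Proposition \ref{prop:SkeinRelationsOnPosets} equates the \emph{product} $\vert J(\calP_1)\vert\cdot\vert J(\calP_2)\vert$ of two posets with the resolution terms, so you cannot resolve $\calP^\exx_{\gamma^\stir}$ by itself --- you must introduce a second arc $\delta$ and get $\ell_k(\gamma^\stir)\,\ell_k(\delta)$ on the left. This is where the real work lies, and it is the step your proposal omits: the paper takes $\delta=\gamma_{AQ_{i+1}}^L$ (where $Q_{i+1}$ is the first pivot with $\theta_{i+1}\neq\pi$), verifies the resulting overlap of extended posets is a crossing overlap, and then observes that one factor on the right-hand side is $\calP^\exx_{\gamma_{AQ_{i+1}}^R}$. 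Lemma \ref{lem:LeftAndRightEqual} gives $\ell_k(\gamma_{AQ_{i+1}}^L)=\ell_k(\gamma_{AQ_{i+1}}^R)$, which is exactly what lets one cancel $\ell_k(\delta)$ from both sides and conclude $\ell_k(\gamma^\stir)>\ell_k(\gamma')$ for a new arc $\gamma'$ with endpoints $A,B$; the descent then terminates at $\gamma_{AB}^L$ or $\gamma_{AB}^R$. Your hope that ``one of the two products identifies with $\ell_k(\gamma_{AB}^L)$'' after induction is not how the inequality is extracted, and without the cancellation via Lemma \ref{lem:LeftAndRightEqual} the skein identity does not deliver a strict comparison between $\ell_k(\gamma^\stir)$ and the length of a single shorter arc.
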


\begin{proof}
Set $\gamma = \gamma^\stir$ throughout. This statement is shown for $k =0$ in the proof of \cite[Theorem 3.5]{LLRS}. Our task will be to show that we can follow the same reasoning for $k >0$. We achieve this by demonstrating that we can still apply skein relations in our setting. 

 Recall that we let $r > 0$ denote the number of straight line segments comprising $\gamma^\stir$, interspersed between $r-1$ angles $\theta_i$. If $r = 1$, then necessarily $\gamma^\stir = \gamma_\fpq = \gamma_{(0,0)(q,p)}^L$ for coprime integers $p$ and $q$, and there is nothing to check. So we will assume $r > 1$ and will assume $\gamma^\stir$ is not homotopic to $\gamma_{AB}^L$ or $\gamma_{AB}^R$.

Notice that $\gamma^\stir$ is homotopic to $\gamma_{AB}^L$ ($\gamma_{AB}^R$) if and only if every $\theta_i = -\pi$ ($\theta_i = \pi$).  Assume $A$ and $B$ are positioned such that $\gamma_{AB}^L$ has every angle $\theta_i = -\pi$. Without loss of generality, let $\gamma^\stir$ be such that $\theta_1 > 0$. Let $i$ be the smallest nonnegative integer such that $\theta_{i+1} \neq \pi$. 

In \cite{LLRS}, the authors break this into three cases based on the value of $\theta_i$: (1) $\theta_{i+1} \in [-2\pi,-\pi]$, (2) $\theta_{i+1} \in (\pi,2\pi)$, and (3) $\theta_{i+1} = 2\pi$. In each of these cases, they describe a second arc, $\delta$, which intersects $\gamma$. There will be an arc, $\gamma'$, in the resolution of this intersection which has endpoints in $A$ and $B$. Using skein relations allows them to conclude $\ell_k(\gamma) > \ell_k(\gamma')$. 

We will show how Case (1) still results in a situation for which we can apply skein relations. Notice that by our assumption that $\theta_1 > 0$, we know $i >0$. Since each $\theta_j = \pi$ for $j \leq i$, there exists a pair of coprime integers $p,q$ such that $Q_{j+1} = Q_j + (q,p)$ for all $0 \leq j \leq i$. 

Now, consider the arc $\delta$ which is the result of taking $\gamma_{AQ_{i+1}}^L$ and adjusting the intersection point which occurs in the middle between $Q_i$ and $Q_{i+1}$ so that it matches that of $\gamma$. By Proposition \ref{prop:StraightenedIsShortest}, $\ell_k(\delta) \geq \ell_k(\gamma_{AQ_{i+1}}^L)$ (see Remark \ref{rem:MustRespectConvention}). In Figure \ref{fig:Case1}, we zoom in on the behavior of these three arcs near $Q_i$ and $Q_{i+1}$ and see that $\gamma$ and $\delta$ intersect. 

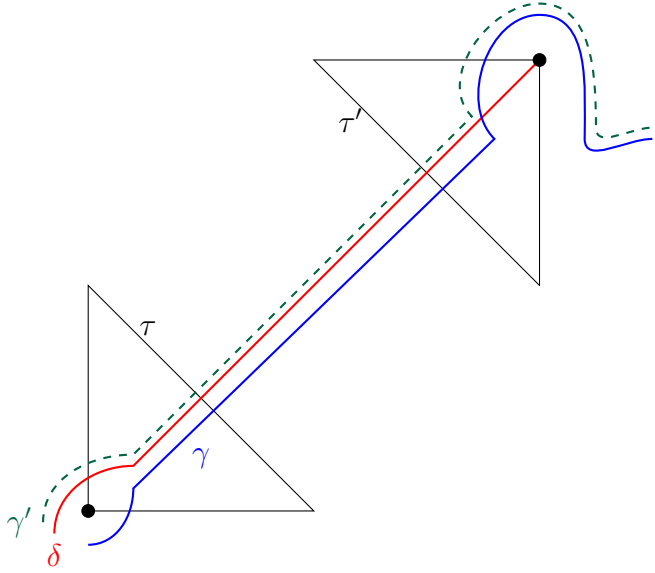
\begin{figure}
    \centering
\begin{tikzpicture}[scale=3]
\draw (0,0) -- (1,0) to node[above, xshift = -20pt, yshift = 20pt]{$\tau$} (0,1) -- (0,0);
\draw (2,1) -- (2,2) -- (1,2) to node[left, yshift = 20pt, xshift = -20pt]{$\tau'$} (2,1);
\draw[red, thick](-0.15,-0.1) to [out = 90, in = 180] (0.2,0.2) -- (2,2);
\draw[blue, thick] (0, -0.15) to [out = 0, in = 270] (0.2,0.1) to (1.8,1.65) to [out = 135, in = 180](2,2.2) to [out = 0, in =90] (2.2, 1.65) to [out = -90, in = 180] (2.5, 1.65);
\node[below, yshift = -15pt, blue] at (0.5,0.5){$\gamma$};
\draw[Dgreen, dashed, thick] (-0.2,-0.05) to [out = 90, in = 180] (0.2,0.25) -- (1.7,1.75)  to [out = 135, in = 180](2,2.25) to [out = 0, in =90] (2.25, 1.7) to [out = -90, in = 180] (2.5, 1.7);
\node[below, red] at (-0.15,-0.1){$\delta$};
\node[left, Dgreen] at (-0.2,-0.05) {$\gamma'$};
\draw[fill=black] (0,0) circle [radius=0.8pt];
\draw[fill=black] (2,2) circle [radius=0.8pt];
\end{tikzpicture}
\caption{One case of a straightened arc $\gamma^\stir = \gamma$ with endpoints in $A$ and $B$ which is not homotopic to $\gamma_{AB}^L$ or $\gamma_{AB}^R$. We intersect this arc with another, $\delta$, and resolve the intersection, yielding a shorter arc, $\gamma'$, with endpoints $A$ and $B$. See the proof of Lemma \ref{lem:FinalStep}.}
\label{fig:Case1}
\end{figure}

Let $s_1$ be the chronological label of the first element in $\calP_\gamma^\exx$ associated to the crossing of $\tau$ (as in Figure \ref{fig:Case1}) and let $t_1$ be the chronological label of the last element in this poset associated to the crossing of $\tau'$. Let $s_2$ and $t_2$ be the analogous indices in $\calP_{\delta}^\exx$; in particular, $t_2 = \vert \calP_{\delta}^\exx \vert$. By our construction of straightened arcs, we see that $\calP_\gamma^\exx[s_1,t_1]$ is isomorphic to $\calP_{\delta}^\exx[s_2,t_2]$. Here, we are using Conventions \ref{conv:gammaABLR} and the fact that $\theta_i$ and $\theta_{i+1}$ are opposite signs. Moreover, we have $\calP_\gamma^\exx(s_1-1) \succ \calP_\gamma^\exx(s_1)$, $\calP_\gamma^\exx(t_1+1) \succ \calP_\gamma^\exx(t_1)$ and $\calP_{\delta}^\exx(s_1-1) \prec \calP_{\delta}^\exx(s_1)$. Therefore, this pair of isomorphic subposets is a crossing overlap. Proposition \ref{prop:SkeinRelationsOnPosets} implies that there are four nonempty posets, $\calP_3,\calP_4,\calP_5,\calP_6$ such that \[
\vert J(\calP_\gamma^\exx) \vert \cdot \vert J(\calP_{\delta}^\exx)\vert = \vert J(\calP_3) \vert \cdot \vert  J(\calP_4)\vert +  \vert J(\calP_5) \vert \cdot \vert  J(\calP_6)\vert.
\]

Following Definition \ref{def:Type0Resolve}, $\calP_3$ is equivalent to $\calP_{\gamma'}^\exx$, where $\gamma'$ is the result of following $\delta$ from its start until its intersection point with $\gamma$ and then following $\gamma$.  Such an arc has endpoints in $A$ and $B$. The poset $\calP_4$ is associated to the arc which follows  $\gamma$ until its intersection with $\delta$ and then follows $\delta$. Call this arc $\gamma''$  and note that by Proposition \ref{prop:StraightenedIsShortest}, $\ell_k(\gamma'') \geq \ell_k(  \gamma_{AQ_{i+1}^R})$. Therefore, by combining our inequalities concerning $\delta$ and $\gamma''$ with  Proposition \ref{prop:SkeinRelationsOnPosets}, we have \[
\ell_k(\gamma) \ell_k(\delta) \geq \ell_k(\gamma) \ell_k(\gamma_{AQ_{i+1}}^L) > \ell_k(\gamma') \ell_k(\gamma'') \geq  \ell_k(\gamma') \ell_k(\gamma_{AQ_{i+1}}^R) 
\]
and by Lemma \ref{lem:LeftAndRightEqual} we conclude $\ell_k(\gamma) > \ell_k(\gamma')$.

The other cases from \cite{LLRS} will again result in crossing overlaps in the extended posets, so that the remainder of the proof readily follows. 
\end{proof}

Set $m^{(k)}_{AB} := \ell_k(\gamma_{AB}^L)$. Note that in particular, if $A = (0,0)$ and $B = (q,p)$ with $\gcd(p,q) = 1$, $m^{(k)}_{AB}$ is the $k$-Markov number $m^{(k)}_{\fpq}$. 

\begin{theorem}\label{thm:LengthIsFromPoset}
Given $A,B \in \mathbb{Z}^2$, 
\[\vert AB\vert_k = m^{(k)}_{AB}.\]
\end{theorem}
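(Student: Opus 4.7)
The plan is to combine all the preceding structural results about $\ell_k$: intersection-reduction, straightening, and classification of minimizing straightened arcs. The main idea is that to find $\vert AB \vert_k$, we may search among a very restricted family of arcs, and then Lemma \ref{lem:FinalStep} together with Lemma \ref{lem:LeftAndRightEqual} pins down the minimum.

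First I would dispose of the trivial case $A = B$, where $\vert AA \vert_k = 0$ by definition and $m^{(k)}_{AA} = \mathcal{W}(\emptyset) = 0$ (since any arc from $A$ to itself that is nullhomotopic has empty poset). For $A \neq B$, note that in computing $\vert AB \vert_k = \min_\delta \ell_k(\delta)$, Lemma \ref{lem:SelfIntNotMinimal} lets us restrict to arcs $\delta$ with no non-contractible self-intersections. Among such arcs, Proposition \ref{prop:StraightenedIsShortest} shows that $\ell_k(\delta) \geq \ell_k(\delta^\stir)$, so we may further restrict to straightened arcs. Lemma \ref{lem:FinalStep} then tells us that any straightened arc with endpoints $A,B$ that is not homotopic to either $\gamma_{AB}^L$ or $\gamma_{AB}^R$ has strictly greater $k$-Markov length than $\gamma_{AB}^L$. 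Hence the infimum over all arcs from $A$ to $B$ is realized (and simultaneously lower bounded) by $\min\bigl(\ell_k(\gamma_{AB}^L), \ell_k(\gamma_{AB}^R)\bigr)$.

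Finally, Lemma \ref{lem:LeftAndRightEqual} says $\ell_k(\gamma_{AB}^L) = \ell_k(\gamma_{AB}^R)$, so the minimum equals $\ell_k(\gamma_{AB}^L) = m^{(k)}_{AB}$ by definition. This yields $\vert AB \vert_k = m^{(k)}_{AB}$.

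There is really no hard step left: each of the three reduction stages has been handled by one of the preceding lemmas. The only thing worth being careful about is logical bookkeeping of the chain of inequalities $\ell_k(\delta) \geq \ell_k(\delta^\stir) \geq \ell_k(\gamma_{AB}^L) = m_{AB}^{(k)}$ and the observation that $\gamma_{AB}^L$ itself is a valid arc from $A$ to $B$, so the minimum is achieved and not merely an infimum. The edge case I would make sure to check explicitly is when $A$ and $B$ lie on a common line segment of $\mathcal{L}$ (so $r = 1$ and $\gamma_{AB}^L$ consists of a single straight segment), since in this situation there are no angles $\theta_i$ to analyze and the claim $\gamma^\stir$ is homotopic to $\gamma_{AB}^L$ is immediate.
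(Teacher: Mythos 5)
Your proposal is correct and follows essentially the same route as the paper: combine Lemma \ref{lem:SelfIntNotMinimal}, Proposition \ref{prop:StraightenedIsShortest}, and Lemma \ref{lem:FinalStep} to reduce the minimization to $\gamma_{AB}^L$ and $\gamma_{AB}^R$, then invoke Lemma \ref{lem:LeftAndRightEqual}. Your version is slightly more careful about the $A=B$ case and the fact that the minimum is attained, but the argument is the same.
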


\begin{proof}

From Lemmas \ref{lem:SelfIntNotMinimal} and \ref{lem:FinalStep} and Proposition \ref{prop:StraightenedIsShortest}, we see that if $\gamma$ is any arc with endpoints in $A$ and $B$ such that $\calP_\gamma^\exx$ is not isomorphic to $\calP_{\gamma_{AB}^L}^\exx$ or $\calP_{\gamma_{AB}^R}^\exx$, then $\ell_k(\gamma) > \ell_k(\gamma_{AB}^L)$. Therefore, we conclude $\gamma_{AB}^L$ minimizes $\ell_k$, and the statement follows from the definition of $m^{(k)}_{AB}$.
\end{proof}

\section{Induced Ordering on Rational Numbers}\label{sec:InducedOrdering}

\subsection{The Generalized Aigner's Conjectures}

In \cite{LLRS}, the key tool for showing Aigner's conjectures is establishing a ``Ptolemy Inequality'' for 0-Markov distance. We will establish this for general $k$-Markov distance. 

\begin{prop}\label{prop:GeneralizedPtolemy}
If $A,B,C,D \in \mathbb{Z}^2$ are four distinct points such that the segments $\overline{AB},\overline{BC},\overline{CD}$ and $\overline{AD}$ form a convex quadrilateral with diagonals $\overline{AC}$ and $\overline{BD}$, then \[
\vert AC \vert_k \cdot \vert BD \vert_k \geq  \vert AB \vert_k \cdot \vert CD \vert_k  + \vert AD \vert_k \cdot \vert BC \vert_k 
\]
\end{prop}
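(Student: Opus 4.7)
The plan is to mimic the strategy used in \cite{LLRS} for the $k=0$ case, now carried out in the extended-poset setting from Section \ref{sec:TwoPosets}. Since $ABCD$ is a convex quadrilateral, the line segments $\overline{AC}$ and $\overline{BD}$ intersect in the interior of the quadrilateral, and this forces the straightened arcs $\gamma_{AC}^L$ and $\gamma_{BD}^L$ to cross as well. I would pick a single triangle $\Delta$ of the lattice $\mathcal{L}$ that contains such a crossing and focus on the behavior of the two arcs as they pass through $\Delta$. By Theorem \ref{thm:LengthIsFromPoset} and Corollary \ref{cor:CountLength}, the product $|AC|_k \cdot |BD|_k$ equals $|J(\calP_{\gamma_{AC}^L}^\exx)| \cdot |J(\calP_{\gamma_{BD}^L}^\exx)|$, so the goal becomes producing a suitable lower bound on this product via poset skein relations.

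The second step would be to check that the geometric crossing inside $\Delta$ induces a \emph{crossing overlap} between the extended posets $\calP_{\gamma_{AC}^L}^\exx$ and $\calP_{\gamma_{BD}^L}^\exx$ in the sense of Section \ref{sec:Skein}. Concretely, both arcs cross the same three edges of $\Delta$ in a short window, and the left-biased Algorithm \ref{algo:Old} together with Algorithm \ref{algo:New} forces the two corresponding subposets to be isomorphic, with one sitting on top of its ambient poset and the other on the bottom. Applying Proposition \ref{prop:SkeinRelationsOnPosets}(1) to this crossing overlap would then produce four posets $\calP_3, \calP_4, \calP_5, \calP_6$ with
\[
|J(\calP_{\gamma_{AC}^L}^\exx)| \cdot |J(\calP_{\gamma_{BD}^L}^\exx)| = |J(\calP_3)| \cdot |J(\calP_4)| + |J(\calP_5)| \cdot |J(\calP_6)|.
\]

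Third, I would identify each of the four resolved posets with the extended poset $\calP_\gamma^\exx$ for a concrete arc $\gamma$ obtained by splicing $\gamma_{AC}^L$ and $\gamma_{BD}^L$ at the chosen intersection: one pair of spliced arcs connects $A$ to $B$ and $C$ to $D$, while the other pair connects $A$ to $D$ and $B$ to $C$. By Theorem \ref{thm:LengthIsFromPoset} the $k$-Markov length of each spliced arc is at least the $k$-Markov distance between its endpoints, so $|J(\calP_3)| \cdot |J(\calP_4)| \geq |AB|_k \cdot |CD|_k$ and $|J(\calP_5)| \cdot |J(\calP_6)| \geq |AD|_k \cdot |BC|_k$ (after possibly relabeling which pair is which). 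Substituting these lower bounds into the skein identity yields the Ptolemy inequality.

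The main obstacle I expect is the bookkeeping in the second and third steps: one must verify that the left-biased convention in Algorithm \ref{algo:Old} interacts with the $k$ new elements inserted by Algorithm \ref{algo:New} in such a way that the overlap is genuinely a crossing overlap (top versus bottom, with the endpoint conditions on $c, c', d, d'$ of Definition \ref{def:Type0Resolve} satisfied), and that the four resolved posets agree on the nose with the extended posets of the spliced arcs. This requires a short case analysis on the slope of each diagonal at the crossing (paralleling the three cases in \cite[Theorem 3.5]{LLRS}) and careful tracking of the extra chains produced by Algorithm \ref{algo:New}. Once that identification is in hand, the inequality follows immediately from Proposition \ref{prop:SkeinRelationsOnPosets} and Theorem \ref{thm:LengthIsFromPoset}.
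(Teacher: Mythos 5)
Your overall strategy is the paper's: realize $\vert AC\vert_k\cdot\vert BD\vert_k$ as a product of order-ideal counts for the extended posets of $\gamma_{AC}^L$ and $\gamma_{BD}^L$, resolve the crossing via Proposition \ref{prop:SkeinRelationsOnPosets}, identify the resolution terms with extended posets of spliced arcs joining $A,B$ / $C,D$ and $A,D$ / $B,C$, and finish with $\ell_k(\gamma)\geq\vert XY\vert_k$. The paper packages the second and third steps as Lemmas \ref{lem:Type0GoesThrough} and \ref{lem:ResolutionGoesThrough}, which you are in effect proposing to reprove inline.

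There is, however, a genuine gap in your second step: you assume the geometric crossing always induces a \emph{crossing overlap} and that only Proposition \ref{prop:SkeinRelationsOnPosets}(1) is needed. That is false when the intersection point of the diagonals lies in an extreme (first or last) triangle of one or both arcs, which certainly occurs for convex quadrilaterals --- e.g.\ for the unit square $A=(0,0)$, $B=(1,0)$, $C=(1,1)$, $D=(0,1)$ the arc $\gamma_{BD}^L$ is homotopic to a segment of $\mathcal{L}$ and its poset is empty, so there is no overlap at all to resolve; more generally the crossing can sit in the triangle incident to one of the four endpoints. These are exactly the Type 1 and Type 2 intersections of Section \ref{sec:InducedOrdering}, and the paper's Lemma \ref{lem:ResolutionGoesThrough} runs a separate case analysis for them, invoking the Type 1 and Type 2 resolutions (Definitions \ref{def:Type1Resolution} and \ref{def:Type2Resolution}); in the Type 2 case one factor of the resolution is the empty poset, corresponding to an arc homotopic to an arc of $\mathcal{L}$, and one must still check the resulting term dominates the relevant product of distances. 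Your claim that ``both arcs cross the same three edges of $\Delta$'' also fails in these degenerate configurations. A complete proof must either rule these cases out (it cannot) or handle them with the other two resolution types, as the paper does. A second, smaller omission: the diagonals $\overline{AC}$ and $\overline{BD}$ may intersect at an integral point, and one must note that the left-deformations move this crossing off the lattice while keeping it inside the quadrilateral before any of the poset analysis applies.
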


The key to showing Proposition \ref{prop:GeneralizedPtolemy} is to use skein relations on crossings between arcs of the form $\gamma_{XY}^L$ and $\gamma_{XY}^R$. We introduced three variants of skein relations in Section \ref{sec:Skein}; we now give three analogous ways in which two arcs can intersect. We distinguish these by saying an intersection of Type $i$ occurs in an extreme (first or last)  triangle for $i$ of the two arcs involved.

\begin{center}
\begin{tabular}{ccc}
Type 0 & Type 1 & Type 2\\
\begin{tikzpicture}[scale = 1.3]
 \draw (0,0.8) -- (1,1.6) -- (4,1.6) -- (5,0.8) -- (4,0) -- (1,0) -- (0,0.8);
 \draw (1,0) -- (1,1.6);
 \draw (4,0) -- (4,1.6);
 \draw[red,thick] (0.5,1.5) -- (4.5,0.1);
 \draw[blue,thick] (0.5,0.1) -- (4.5,1.5);
\end{tikzpicture}
&
\begin{tikzpicture}[scale = 1.3]
 \draw[] (0,0) -- (2,0) -- (1,1.6) -- (0,0); 
 \draw[red, thick] (0,0) -- (2.5,1.2);
\draw[blue,thick] (0,1.2) -- (1.5,-0.5);
\end{tikzpicture}
&
\begin{tikzpicture}[scale = 1.3]
 \draw[] (0,0) -- (2,0) -- (1,1.6) -- (0,0); 
 \draw[red, thick] (0,0) -- (2.5,1.2);
\draw[blue,thick] (1,1.6) -- (1,-0.5);
\end{tikzpicture}
\end{tabular}
\end{center}

%In the following, we write $\gamma_{AB}^{L/R}$ to denote an arc which is either $\gamma_{AB}^{L}$ or $\gamma_{AB}^{R}$.

\begin{lemma}\label{lem:Type0GoesThrough}
If two arcs $\gamma_{AC}^{L}$ and $\gamma_{BD}^{L}$ have an intersection point of Type 0, then their corresponding extended posets have a crossing overlap.
\end{lemma}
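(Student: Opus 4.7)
The plan is to read off the crossing overlap directly from the geometry of the intersection. Write $\gamma_1 := \gamma_{AC}^L$ and $\gamma_2 := \gamma_{BD}^L$, and let $P$ be their Type 0 intersection point, lying in a triangle $T$ that is non-extreme for each arc. First I would identify a maximal block of arcs $\tau_a, \tau_{a+1}, \ldots, \tau_b$ from $\mathcal{L}$ that both $\gamma_1$ and $\gamma_2$ cross in the same order as one walks along each curve through $T$. Since both curves are left-perturbed straight line segments through the same triangle $T$, they share the two edges of $T$ that they cross at $P$, and one obtains the claimed block by extending outward in both directions from $T$ until the curves diverge. This matches the schematic drawn for Type 0 intersections.

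Next, I would verify that the shared block of crossings produces isomorphic subposets $R_1 = \calP^\exx_{\gamma_1}[c,d]$ and $R_2 = \calP^\exx_{\gamma_2}[c',d']$. The label of each element is determined by the slope of the corresponding $\tau_i$, so labels match by construction. The cover relation between consecutive elements depends only on whether $\tau_i$ and $\tau_{i+1}$ share an endpoint on a given side of the curve; because the two curves run in parallel through the shared strip, these local data agree, as do the internal orderings of the $k$-pairs produced by Algorithm \ref{algo:Old}. Algorithm \ref{algo:New} then inserts the same number of unweighted elements in matching chronological positions on each side, so the isomorphism descends to the extended posets.

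The key step is to verify the remaining conditions of Definition \ref{def:Type0Resolve}. For the top/bottom condition, I would examine the two boundary crossings of the shared block. Because the two arcs cross transversally at $P$, they enter the shared strip from opposite sides (one from above, one from below) and exit to opposite sides. Translating this into the cover-relation rule of Algorithm \ref{algo:Old} (whose outcome is determined by which side of the curve two consecutive $\tau_i$'s meet on), the element immediately before $R_1$ in $\calP^\exx_{\gamma_1}$ lies below $\calP^\exx_{\gamma_1}(c)$, while the analogous element just before $R_2$ in $\calP^\exx_{\gamma_2}$ lies above $\calP^\exx_{\gamma_2}(c')$, with corresponding opposite relations at the right endpoints. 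Hence $R_1$ sits on top of $\calP^\exx_{\gamma_1}$ and $R_2$ on the bottom of $\calP^\exx_{\gamma_2}$. The endpoint conditions, namely that we do not have both $c = 1$ and $c' = 1$, nor both $d = h_1$ and $d' = h_2$, follow immediately from the non-extreme hypothesis on $T$: each arc crosses at least one additional edge of $\mathcal{L}$ on each side of the shared strip.

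The main obstacle I anticipate is the combinatorial bookkeeping at the two ends of the shared strip. One must handle Convention \ref{conv:gammaABLR} carefully, since it may perturb the two arcs to slightly different positions along a shared $\tau_i$, and the blocks of $k$ unweighted elements inserted by Algorithm \ref{algo:New} around each weighted pair must be matched correctly between $R_1$ and $R_2$, so that the two subposets coincide as induced subposets on matching chronological intervals rather than merely as abstractly isomorphic fence posets. The nearly palindromic symmetry from Lemma \ref{lem:NearPalindrome} should make the matching transparent, but this is where I expect the proof to require the most care.
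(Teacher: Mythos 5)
Your overall strategy---extract the maximal shared block of crossings around the intersection point, check that it yields isomorphic chronological intervals of the two extended posets, and then verify the on-top/on-bottom and endpoint conditions by inspecting the boundary of the block---is essentially the paper's argument. The isomorphism step and the ``opposite sides'' reading of the transversal crossing are both in line with the paper's case analysis (including the case, special to $k>0$, where the two arcs cross the same $\tau$ just outside the overlap but on opposite sides of its midpoint).

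There is, however, a genuine gap in your treatment of the endpoint conditions. You assert that because the triangle $T$ containing the intersection point is non-extreme for both arcs, ``each arc crosses at least one additional edge of $\mathcal{L}$ on each side of the shared strip,'' hence $c,c'>1$ and $d<h_1$, $d'<h_2$. Non-extremeness of $T$ only gives extra crossings on each side of $T$ itself, not on each side of the entire shared strip: the strip can run all the way to an endpoint of one of the arcs, so that (say) $\gamma_{BD}^{L}$ begins inside the overlap and $c'=1$, even though the intersection is still of Type $0$. This is precisely why Definition \ref{def:Type0Resolve} only excludes $c=c'=1$ simultaneously (and $d=h_1$, $d'=h_2$ simultaneously), and why the paper's proof explicitly treats the cases where one arc ``begins in'' the overlap. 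Your on-top/on-bottom verification also tacitly assumes that the elements $\calP^\exx_{\gamma_1}(c-1)$ and $\calP^\exx_{\gamma_2}(c'-1)$ both exist, which fails in exactly these cases; those cases need their own argument (the paper handles $c=1$ and $c'=1$ via the induced-subposet clauses of the definition). A smaller inaccuracy: two arcs passing through the same triangle need not cross the same two of its three edges, so the shared block need not contain both crossings at $P$---only that at least one common crossing exists, which is all the paper claims and all that is needed to start the overlap.
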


\begin{proof}
Let $\gamma_1:= \gamma_{AC}^{L}$ and $\gamma_2:= \gamma_{BD}^{L}$. When two arcs have an intersection of Type 0, they have at least one common intersection, which shows that the posets $\calP_{\gamma_1}$ and $\calP_{\gamma_2}$ will have an overlap. This further implies that $\calP_1:=\calP^\exx_{\gamma_1}$ and $\calP_2:=\calP^\exx_{\gamma_2}$ will have an overlap, say $R:=\calP_1[c,d] \cong \calP_2[c',d']$.

It remains to show that, up to relabeling, this overlap is on top in $\calP_1$ and on bottom in $\calP_2$. This can be done by analyzing a few cases near the intersections which correspond to $\calP_1(c-1)$ and $\calP_2(c'-1)$, when they exist. If $c>1$ and $c'>1$, then this will either correspond to  $\gamma_1$ and $\gamma_2$ crossing distinct arcs before the overlap or crossing the same arc, say $\tau_2$, but with one intersection point closer to the left endpoint of $\tau_2$ and the other closer to the right (using consistent orientations with respect to the overlap). 

\begin{center}
\begin{tikzpicture}[scale=1.5]
\draw (0,0) to node[right]{$\tau_1$} (1,1) to node[above]{$\tau_3$} (0,1) to node[left]{$\tau_2$}(0,0);
\draw[thick, red] (-0.2,0.3) to node[below, xshift = 20pt, yshift = -5pt]{$\gamma_2$} (1,0.1);
\draw[thick, blue] (0.4,0) to node[right, yshift = 10pt, xshift = -5pt]{$\gamma_1$}(0.1,1.1);
\node[] at (1.5,0.75){$\calP_1$:};
\node() at (2.25,0.5){$\cdots$};
\node(a) at (2.5,0.5){$\tau_3$};
\node(b) at (3,1) {$\tau_1$};
\node() at (3.3,1){$\cdots$};
\draw [decorate,
    decoration = {brace}] (2.9,1.1) to node[above, scale = 0.8]{$R$} (3.4,1.1);
%\node[above, scale = 0.8] at (3.15,1.15){$R$};
\draw(a) -- (b);
\node[] at (1.5,-0.25){$\calP_2$:};
\node() at (2.25,0){$\cdots$};
\node(c) at (2.5,0){$\tau_2$};
\node(d) at (3,-0.5) {$\tau_1$};
\node() at (3.3,-0.5){$ \cdots$};
\draw(c) -- (d);
\draw [decorate,
       decoration = {brace,mirror}] (2.9,-0.6) to node[below, scale = 0.8]{$R$}  (3.4,-0.6);
\draw (5,0) to node[right]{$\tau_1$} (6,1) to node[above]{$\tau_3$} (5,1) to node[left]{$\tau_2$}(5,0);
\draw[thick, red] (4.8,0.3) to node[below, xshift = 20pt, yshift = -5pt]{$\gamma_2$} (6,0.1);
\draw[thick, blue] (5.4,0) to node[right, yshift = 10pt, xshift = -5pt]{$\gamma_1$}(4.9,1.1);
\node[] at (6.5,0.75){$\calP_1$:};
\node() at (7.25,0.5){$\cdots$};
\node(e) at (7.5,0.5){$\tau_2$};
\node(f) at (8,1){$\tau_2$};
\node(g) at (8.5,0.5){$\tau_1$};
\node() at (8.8,0.5){$\cdots$};
\draw(e) -- (f);
\draw(f) -- (g);
\draw [decorate,
    decoration = {brace}] (7.9,1.1) to node[above, scale = 0.8]{$R$}  (9.1,1.1);
\node[] at (6.5,-0.5){$\calP_2$:};
\node() at (7.25,0){$\cdots$};
\node(h) at (7.5,0){$\tau_2$};
\node(i) at (8,-0.5){$\tau_2$};
\node(j) at (8.5,-1){$\tau_1$};
\node() at (8.8,-1){$\cdots$};
\draw(h) -- (i);
\draw(i) -- (j);
\draw [decorate,
    decoration = {brace, mirror}] (7.9,-1.1) to node[below, scale = 0.8]{$R$}  (9.1,-1.1);

\end{tikzpicture}
\end{center}

If $c=1$ or $c' = 1$, this means that $\gamma_1$ or $\gamma_2$ begins ``in'' the overlap.  The same three cases are possible for $d$ and $d'$. In all such cases, one can check that the fact that $\gamma_1$ and $\gamma_2$ intersect implies that the overlap is on top in $\calP_1$ and on bottom in $\calP_2$, up to indexing. 
\end{proof}

%Given two arcs which intersect, $\gamma_1$ and $\gamma_2$, let their resolution be the pair of sets $\{\gamma_3,\gamma_4\} \cup \{\gamma_5,\gamma_6\}$. 

\begin{lemma}\label{lem:ResolutionGoesThrough}
Let $A,B,C,D \in \mathbb{Z}^2$ be four distinct points such that the segments $\overline{AB},\overline{BC},\overline{CD}$ and $\overline{AD}$ form a convex quadrilateral with diagonals $\overline{AC}$ and $\overline{BD}$. There is a resolution of the posets $\calP^\exx_{\gamma_{AC}^{L}}$ and $\calP^\exx_{\gamma_{BD}^{L}}$, say $\{\calP_3, \calP_4\} \cup \{\calP_5, \calP_6\}$ such that, up to reordering, $\calP_3 = \calP^\exx_{\gamma_3}$ for $\gamma_3$ an arc with endpoints in $A,B$ and similarly for $\calP_4$ and $C,D$; $\calP_5$ and $A,D$; and $\calP_6$ and $B,C$. 
\end{lemma}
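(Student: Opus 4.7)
The plan is to realize the resolution through a geometric case analysis on how the diagonals $\overline{AC}$ and $\overline{BD}$ cross, and then verify that each of the four posets produced by the appropriate resolution (of Type 0, 1, or 2) is the extended poset of an arc with the claimed endpoints.

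First, convexity guarantees that $\overline{AC}$ and $\overline{BD}$ meet at a unique interior point, so after the perturbation of Convention \ref{conv:StraightenedArcs} the arcs $\gamma_{AC}^L$ and $\gamma_{BD}^L$ cross within a single triangle $\Delta$ of $\mathcal{L}$. I would classify this crossing according to whether $\Delta$ is an extreme triangle (one incident to an endpoint) for neither, exactly one, or both of the two arcs, yielding the Type 0, Type 1, and Type 2 cases respectively. In each case, smoothing the crossing at $\Delta$ in its two possible ways produces two pairs of arcs whose endpoints are, respectively, the pairings $\{(A,B),(C,D)\}$ and $\{(A,D),(B,C)\}$, which matches the pairing claimed in the lemma.

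In the Type 0 case, Lemma \ref{lem:Type0GoesThrough} immediately yields a crossing overlap $\calP^\exx_{\gamma_{AC}^L}[c,d] \cong \calP^\exx_{\gamma_{BD}^L}[c',d']$, and I would then apply Definition \ref{def:Type0Resolve}. The key step is to verify that the combinatorial cut-and-paste prescribed there---concatenating initial segments of one poset with terminal segments of the other and introducing a prescribed new cover relation at the boundary---matches, crossing by crossing, the sequence of intersections with $\mathcal{L}$ of the corresponding smoothed arc (after any necessary reversal of orientation at the crossing). Algorithm \ref{algo:Old} and Algorithm \ref{algo:New}, together with Convention \ref{conv:StraightenedArcs}, are used to confirm that the direction of each newly introduced cover relation agrees with whether the corresponding lattice line is crossed closer to its left or right endpoint under the smoothed arc. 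The Type 1 and Type 2 cases then follow the same template, applying Definitions \ref{def:Type1Resolution} and \ref{def:Type2Resolution}, with the ``truncation'' branches (governed by the extremal indices $u, v$) corresponding geometrically to a smoothed arc that terminates trivially at the extreme triangle.

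The main obstacle is the crossing-by-crossing matching in the Type 0 case at the endpoints of the overlap. When $c = 1$ or $d = h_1$ (or the analogous conditions on the second poset), the truncation branches of Definition \ref{def:Type0Resolve} activate, and I would need to check that these correspond precisely to arcs which ``bend back'' immediately upon leaving an endpoint, while still fitting the claimed endpoint pairing. A clean way to organize the bookkeeping is to work first at the level of the unextended posets from Algorithm \ref{algo:Old}, where each poset element corresponds to a single crossing and the geometric picture is the most transparent, and then to promote the conclusion to the extended posets using Lemma \ref{lem:ExtendPoset}: the expansion operation of Algorithm \ref{algo:New} is local to pairs of balanced elements, and in particular commutes with the resolutions once the overlap is verified to lie away from the ``half'' of a balanced pair. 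Once this matching is established in the Type 0 setting, the Type 1 and Type 2 arguments are similar but simpler, since the extreme-triangle condition forces one of the four posets in the resolution to correspond to a degenerate arc, consistent with the empty/chain outputs of Definitions \ref{def:Type1Resolution} and \ref{def:Type2Resolution}.
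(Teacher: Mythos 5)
Your proposal is correct and follows essentially the same route as the paper: a case analysis on the intersection type (0, 1, or 2), an appeal to Lemma \ref{lem:Type0GoesThrough} in the crossing-overlap case, and a crossing-by-crossing comparison of Definitions \ref{def:Type0Resolve}--\ref{def:Type2Resolution} with Algorithms \ref{algo:Old} and \ref{algo:New} to identify each resolved poset with the poset of a smoothed concatenated arc having the claimed endpoints. The only cosmetic difference is that you propose to work at the level of unextended posets and promote via the locality of Algorithm \ref{algo:New}, whereas the paper argues directly on the extended posets; both amount to the same verification.
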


\begin{proof}
Notice that the statement of the Lemma concerns straight line segments, such as $\overline{AC}$, and not the deformed arcs $\gamma_{AC}^{L}$. Therefore, the intersection point between $\overline{AC}$ and $\overline{BD}$ could occur at an integral point. When passing to the deformed arcs $\gamma_{AC}^{L}$ and $\gamma_{BD}^{L}$, this intersection point will move slightly off the integral point, but will remain in the  interior of the convex quadrilateral. Therefore, we can safely focus on the left-biased curves $\gamma_{AC}^L$ and $\gamma_{BD}^L$.

Assume first that  $\gamma_1:=\gamma_{AC}^{L}$ and $\gamma_2:=\gamma_{BD}^{L}$ cross in a Type 0 intersection. Orient these arcs so that they pass through the overlap in the same direction (i.e., so that the overlap has a consistent chronological labeling in the two posets).  Lemma \ref{lem:Type0GoesThrough} guarantees that the posets $\calP_1:=\calP^\exx_{\gamma_{AC}^{L}}$ and $\calP_2:=\calP_{\gamma_{BD}^{L}}$ have a crossing overlap. Comparing the resolution of a crossing overlap (Definition \ref{def:Type0Resolve}) with the construction of the poset associated to an arc (combining Algorithms \ref{algo:Old} and \ref{algo:New}), one can see that $\calP_3$ is equivalent to $\calP_{\gamma_3}$ where $\gamma_3$ is the arc given by following $\gamma_1$ until the intersection point and then following $\gamma_2$ using its established orientation. This can be denoted $\gamma_1 \circ \gamma_2$; it is important to see we read this left-to-right. In this notation, similarly $\calP_4 = \calP_{\gamma_2 \circ \gamma_1}$. Let $\gamma_5'$ be the result of following $\gamma_1$ until the intersection point, then following $\gamma_2$ backwards, and let $\gamma_5$ be the result of pulling $\gamma_5'$ tight, removing any unnecessary intersections, but preserving the locations of all (necessary) intersections. This will exactly produce an arc such that $\calP_5 = \calP_{\gamma_5}$, and the description for $\calP_6$ is similar. 

Now, suppose $\gamma_1$ and $\gamma_2$ cross in a Type 1 intersection. Without loss of generality, assume that this intersection appears in the first face which $\gamma_2$ passes through. Let $\tau_1,\tau_2,$ and $\tau_3$ be the arcs which border this triangle, such that $\gamma_2$ crosses $\tau_1$ and $\gamma_1$ crosses $\tau_2$ and $\tau_3$, in this order. In order to satisfy the chronological labeling requirement in Definition \ref{def:Type1Resolution}, assume that $\gamma_1$ $\tau_2$ and $\tau_3$ share an endpoint to the right of $\gamma_1$ (with respect to its chosen orientation).

With this structure, we can see that $\calP_3 =\calP_{\gamma_1 \circ \gamma_2}$ and $\calP_5 = \calP_{\gamma_1^{-1} \circ \gamma_2}$. Meanwhile, $\calP_4$ is associated to the arc $\gamma_4$ resulting from taking $\gamma_2 \circ \gamma_1$ and pulling this curve past all unnecessary crossings. The ``unnecessary crossings'' consist of $\tau_3$ and any arcs crossed afterwards by $\gamma_1$ which share an endpoint with $\tau_3$. The poset $\calP_6$ can be described similarly, using $\gamma_2 \circ \gamma^{-1}$.

Discussing a Type 2 intersection is similar, and we omit the details. The reason that $\calP_4$ is set to $\emptyset$ is that one arc in the geometric resolution will be homotopic to an arc in $\mathcal{L}$.
\end{proof}

With these two Lemmas, we can prove the Ptolemy inequality.

\begin{proof}[Proof of Proposition \ref{prop:GeneralizedPtolemy}]
From Theorem \ref{thm:LengthIsFromPoset}, we have $\vert AC \vert_k = \ell_k(\gamma_{AC}^L)$ and similarly for $\vert BD \vert_k$. If $\vert AC \vert_k$ and $\vert BD \vert_k$ intersect, so do $\gamma_{AC}^L$ and $\gamma_{BD}^L$. By Lemmas \ref{lem:Type0GoesThrough} and \ref{lem:ResolutionGoesThrough} and Proposition \ref{prop:SkeinRelationsOnPosets}, we have \[
\ell_k(\gamma_{AC}^L) \ell_k(\gamma_{BD}^L) = \ell_k(\gamma_3)\ell_k(\gamma_4) + \ell_k(\gamma_5)\ell_k(\gamma_6)
\]
where $\gamma_3$ has endpoints $A,B$, $\gamma_4$ has endpoints $C,D$, $\gamma_5$ has endpoints $A,D$, and $\gamma_6$ has endpoints $B,C$. Since $\ell_k(\gamma_3) \geq \vert AB \vert_k$ and similarly for the other $\gamma_i$, the claim follows. 
\end{proof}

\begin{proof}[Proof of Theorem \ref{thm:Main}]
When $k =0$, this result is \cite[Theorem 4.1]{LLRS}. The proof only uses the Ptolemy inequality. Since we have shown the Ptolemy inequality holds for general $k$, we can conclude Aigner's conjectures also hold in the $k$-Markov setting. 
\end{proof}

\subsection{More Broadly}

For each non-negative integer $k$, we have an induced partial order on $\mathbb{Q}_{[0,1]}$ given by setting $\fpq \leq_k \frs$ whenever $m^{(k)}_\fpq \leq m^{(k)}_\frs$. If $k = 0$, then claiming that $\leq_0$ is a total order is equivalent to Frobenius' conjecture. Indeed, \cite[Conjecture 1.8]{gyoda2023uniqueness} posits that each partial order $\leq_k$ is a total order (i.e., a $k$-version of the uniqueness conjecture). 

In \cite{LLRS}, the authors widely generalize Aigner's conjectures by exhibiting families of relations amongst Markov numbers whose rational labels sit on lines of various slopes. In this setting, Conjecture \ref{conj:Aigner} concerns lines of slope $0,-1$, and $\infty$.

It is natural to wonder if all these inequalities will hold for $k$-Markov numbers. The proof techniques used for the wider class of inequalities utilize more than just the Ptolemy inequality. For instance, the authors make use of the fact that the Markov numbers $m_{\frac1n}$ are the odd-indexed Fibonacci numbers and $m_{\frac{n-1}{n}}$ are odd-indexed Pell numbers. One can consider the $k$-versions of these families. These were considered for $k = 1$ in \cite[Section 4]{banaian2024generalization}. We can compare the recurrences for Markov numbers and 1-Markov numbers indexed by rational numbers $\frac1n$. The former is a well-known relation on odd-indexed Fibonacci numbers and the latter comes from \cite[Proposition 2]{banaian2024generalization}. \[
m_{\frac1n} = 3m_{\frac{1}{n-1}} - m_{\frac{1}{n-2}} \qquad \qquad m_{\frac{1}{n}}^{(1)} = 5m^{(1)}_{\frac{1}{n-1}} - m^{(1)}_{\frac{1}{n-2}} - 1.
\]

We expect that there will be similar patterns for such progressions of $k$-Markov numbers, and more generally numbers $m_{AB}^{(k)}$. \cite[Remark 6.3]{LLRS} suggests an approach using skein relations involving closed curves, which follows the spirit of this article. This would be useful in making further progress towards understanding the partial order $(\mathbb{Q}_{[0,1]},\leq_k)$. However, the slight asymmetry in the combinatorial constructions related to $k$-Markov numbers when $k > 0$ (such as in Lemma \ref{lem:NearPalindrome}) yields non-homogeneous recurrence relations as above. This could increase the level of difficulty of directly following the methods from \cite{LLRS} for $k > 0$.

These considerations leads us to believe this avenue of exploration is interesting but outside the scope of the present article. We end with a question in this direction.\footnote{This question will be answered in the affirmative in joint work with Min Huang, to appear soon.}

\begin{quest}\label{quest:AreTheyDifferent}
Do there exist $k,k' \in \mathbb{Z}_{\geq 0}$ and $\fpq,\frs \in \mathbb{Q} \cap [0,1]$ such that $\fpq \leq_k \frs$ but $\fpq >_{k'} \frs$?
\end{quest}

\bibliographystyle{abbrv}
\bibliography{bibliography}
\end{document}